\numberwithin{equation}{section}
\theoremstyle{plain} 
\newtheorem{thm}{Theorem}[section]
\newtheorem{lem}[thm]{Lemma}
\newtheorem{pro}[thm]{Proposition}
\newtheorem{assumption}[thm]{Assumption}
\newtheorem{defn}[thm]{Definition}
\newtheorem*{theorem}{Theorem}
\theoremstyle{remark}
\newtheorem{rem}[thm]{Remark}
\newcommand{\mycomment}[1]{}
\def\Tr{\mathrm{Tr}}
\renewcommand{\Re}{\mathrm{Re}\,}
\renewcommand{\Im}{\mathrm{Im}\,}
\newcommand{\im}{\mathrm{Im}\,}
\newcommand{\E}{{\mathbb E }}
\newcommand{\ii}{\mathrm{i}}
\newcommand{\deq}{\mathrel{\mathop:}=}
\newcommand{\bs}{\boldsymbol}
\newcommand{\nc}{\normalcolor}
\renewcommand{\mathbf}[1]{\bs{#1}}
\newcommand{\eps}{\epsilon}
\begin{document}

\begin{frontmatter}
\title{Signal detection from spiked noise via asymmetrization}
\runtitle{Signal detection via asymmetrization}

\begin{aug}
\author[A]{\fnms{Zhigang}~\snm{Bao}\ead[label=e1]{zgbao@hku.hk}},
\author[B]{\fnms{Kha Man}~\snm{Cheong}\ead[label=e2]{kmcheong@connect.ust.hk}}
\author[A]{\fnms{Jaehun}~\snm{Lee}\ead[label=e4]{jaehun@hku.hk}}
\and
\author[A]{\fnms{Yuji}~\snm{Li}\ead[label=e3]{u3011732@connect.hku.hk}}
\address[A]{University of Hong Kong\printead[presep={, }]{e1,e4,e3}}

\address[B]{Hong Kong University of Science and Technology\printead[presep={, }]{e2}}
\end{aug}

\begin{abstract}
The signal plus noise model $H=S+Y$ is a fundamental model in signal detection when a low rank signal  $S$ is polluted by   noise  $Y$. In the high-dimensional setting, one often uses the leading singular values and corresponding singular vectors of $H$ to conduct the statistical inference of the signal  $S$. Especially, when $Y$ consists of iid random entries, the singular values of $S$ can be estimated from those of $H$ as long as the signal $S$ is strong enough.  However, when the $Y$ entries are heteroscedastic or heavy-tailed, this standard approach may fail. Especially in this work, we consider a situation that can easily arise with heteroscedastic or heavy-tailed noise but is particularly difficult to address using the singular value approach, namely, when the noise $Y$ itself may create spiked singular values.  It has been a recurring question how to distinguish the signal 
$S$ from the spikes in $Y$, as this seems impossible by examining the leading singular values of $H$. Inspired by the work \cite{CCF21}, we turn to study the eigenvalues of an asymmetrized model when two samples $H_1=S+Y_1$ and $H_2=S+Y_2$ are available. We show that by looking into the leading eigenvalues (in magnitude) of the asymmetrized model $H_1H_2^*$, one can easily detect $S$. Unlike \cite{CCF21}, we show that even if the spikes from $Y$ is much larger than the strength of $S$, and thus the operator norm of $Y$ is much larger than that of $S$, the detection is still effective. Second, we establish the precise detection threshold. Third, we do not require any structural assumption on the singular vectors of $S$. Finally, we derive precise limiting behaviour of the leading eigenvalues and eigenvectors of the asymmetrized model. Based on the limiting results, we propose a completely data-based approach for the detection of $S$.  We will primarily discuss the heteroscedastic case and then discuss the extension to the heavy-tailed case. As a byproduct, we also derive the fundamental result regarding the outlier of non-Hermitian random matrix in \cite{Tao} under the minimal 2nd moment condition.
\end{abstract}

\begin{keyword}[class=MSC]
\kwd[Primary ]{60B20}
\kwd{62G10}
\kwd[; secondary ]{62H10}
\end{keyword}

\begin{keyword}
\kwd{signal-plus-noise model}
\kwd{spiked model}
\kwd{signal detection}
\kwd{asymmetrization}
\kwd{outlying eigenvalues}
\end{keyword}

\end{frontmatter}

\section{Introduction}

In this paper, we consider the following signal-plus-noise model 
\begin{align*}
H=S+Y\equiv S+\Sigma X,
\end{align*}
where $X=(x_{ij})\in \mathbb{R}^{p\times n}$ is a random matrix with independent mean $0$ entries. We further denote the variance profile of the matrix by
\begin{align*}
T=(t_{ij}):=n(\text{Var}(x_{ij}))
\end{align*}
which is the matrix with entries given by the variances of $\sqrt{n}x_{ij}$'s.  Throughout the paper, we make the following assumption on the variance profile: there exist two positive constants $t_*\leq t^*$  such that
\begin{align}
t_*\leq \min_{i,j} t_{ij} \leq \max_{i,j} t_{ij}\leq t^*. \label{flatness_assumption}
\end{align}
 Here we assume that $S\in \mathbb{R}^{p\times n}$ is a fixed rank matrix  and $\Sigma\in \mathbb{R}^{p\times p}$ is a fixed rank perturbation of identity, i.e., 
\begin{align}
S=\sum_{i=1}^k d_i u_iv_i^*=:UDV^*, \qquad \Sigma= I +\sum_{j=1}^r \sigma_j\xi_j\theta_j^*=: I +\Xi \Delta \Theta^*, \label{022701} 
\end{align}
where $k$ and $r$ are fixed nonnegative integers, and $\{u_i\}$, $\{v_i\}$, $\{\xi_i\}$ and $\{\theta_i\}$ are 4 classes of deterministic orthonormal vectors. Here $D=\text{diag}(d_1, \ldots, d_k)$ and $\Delta=\text{diag}(\sigma_1, \ldots, \sigma_r)$, where $\{d_i\}'s$ and $\{\sigma_i\}'s$ are two collections of nonnegative numbers,  both ordered in descending order.  Throughout the paper, we always assume that $\Sigma$ is invertible and further assume $\|\Sigma^{-1}\|_{\text{op}}\leq K$ for some constant $K>0$. We interpret $S$ as a signal, which is polluted by the noise part $\Sigma X$.  Let $\mathbbm{1}$ be the all-one matrix. Differently from the classical setting in many previous literature where $(\Sigma, T)=(I, \mathbbm{1})$, here we consider the general setting when the noise part $\Sigma X$ itself may be heteroscedastic or even correlated. We remark here that in case $\Sigma$ is diagonal, in principle we can simply write $\Sigma X$ and $\widetilde{X}$, where the latter still has independent entries and a general variance profile $\widetilde{T}$, analogously to $T$. Even in this case, we would prefer to keep the writing $\Sigma X$, as this will give us the flexibility of choosing $\sigma_i$'s to be even $n$-dependent, as in this case $\widetilde{T}$-entries no longer satisfy the assumption (\ref{flatness_assumption}). At this moment, we also assume that all moment of $x_{ij}$'s exist, although by some standard truncation technique the condition can be relaxed. Later, we will also consider a heavy-tailed scenario when only 2nd moment exists. 

In case $(\Sigma, T)=(I, \mathbbm{1})$, a standard approach to detect $S$ from $H$ is to investigate the leading singular values of $H$, as natural estimators of the counterpart of $S$. In the high-dimensional setting when $p$ and $n$ are proportional, there has been a vast of literature on the singular value approach; see \cite{Ding20, BDW22, Capitaine, CD16, BN12, LK25, GD14, CTT17, JN17, GD17, JCL21} for instance.  Specifically, as a prominent example of the famous Baik-Ben Arous-P\'{e}ch\'{e} (BBP) phase transition phenomenon \cite{BBP}, one knows that the $i$-th leading singular value of $H$ will jump out of the support of the Marchenko-Pastur law, and converges to a limiting location which is a function of $d_i$, if the $d_i$ is sufficiently large. From the limiting location of the leading singular value of $H$, one can recover the value of $d_i$.  We also refer to \cite{BS06, BN11, BY12, CD16, Paul, BPZ, CD08, CD12, BDWW, KY14, KY13, BW22} and the reference therein for the study of BBP transition on other models such as deformed Wigner matrices and spiked covariance matrices.  The BBP transition has found numerous applications in various statistical problems; see \cite{JP18, WY17, PLY17, LWY17, BDMN11, O09} for instance. 

The BBP transition for the signal-plus-noise model can be further extended to the case when $\Sigma$ is general but itself does not create any outliers in the singular value distribution of $\Sigma X$; see \cite{DY22} for instance. In this case, if $d_i$ is sufficiently large, one can still observe outliers in the singular value distribution of $H$. Nevertheless, this time, the limiting location of the leading singular value will depend on the parameters in $\Sigma$ and $T$ as well, which are often unknown and hard to estimate in applications. This prevents one from using these limiting locations to estimate $d_i$'s. As the signal plus noise model with heteroscedastic noise is ubiquitous (see \cite{LK25, B19, CH77, F09, HS19, SH14, W06} for instance), alternative approach to detect the signal from such general model is in high demand.  

In order to solve the signal detection problem for the heteroscedastic case, in \cite{CCF21} the authors proposed  an asymmetrized non-Hermitian random matrix model for this purpose when two samples of the data are available. Actually, the work \cite{CCF21} primarily focuses on the deformed Wigner matrices. But the strategy can be generalized to signal-plus-noise model as well, as mentioned in \cite{CCF21}. More specifically, if one has two independent samples $H_1=S+\Sigma X_1$ and $H_2=S+\Sigma X_2$, one can turn to consider the random matrix model $H_1H_2^*$ or a linearization of it. Under an assumption that the operator norm of the noise part $\Sigma X$ is significantly smaller than the signal part $S$, the authors in \cite{CCF21}  find that the leading eigenvalue (in magnitude) of the non-Hermitian model contains the precise information of $d_i$'s in case $\Sigma=I$, and the  heteroscedasticity of $\Sigma X$ only shows up in a subleading order, and thus the unknown parameters do not matter if one aims for the first order estimate of the signals. From the mathematical point of view, the closeness between the leading eigenvalues of the signal plus non-Hermitian noise matrix model and its signal part was previously revealed in \cite{Tao}, where the outlier of the low rank deformation of non-Hermitian square matrix with iid entries is studied. In particular, it reveals a striking difference from the Hermitian case, where the outlier exhibits an order-1 bias toward the true signal.  We also refer to \cite{BC16, BBCC21,Ra15, Han24} for further study on the outliers of the deformed non-Hermitian random matrices.  The work \cite{CCF21} provides a very interesting application of this closeness and also provide a non-asymptotic analysis, under general assumption on $T$. We also refer to \cite{BCN} for an application of asymmetrization to the sparse matrix completion problem.  In this work, we will continue this line of research to show that the asymmetrization technique is powerful even various conditions in \cite{CCF21} are not satisfied, and it can be used to tackle other challenging questions in signal detection. Especially, we will answer a recurring question on how to detect the signal part $S$ from $H$ when $\Sigma X$ itself may create large spiked singular values. Although distinguishing the outlying singular values of $H$ caused by the signal part $S$ and the noise part $\Sigma$ is difficult, we find again, the asymmetrization approach is effective for this purpose. Unlike the situation in \cite{CCF21}, where detecting the existence of the signal is not an issue and only obtaining a precise estimate is challenging using the singular value approach, in our case, even detecting the existence of the signal using the singular value approach is difficult. In addition,  differently from \cite{CCF21}, we show that even when the spikes in $\Sigma$ is much  larger than $d_i$'s in $S$, and thus the operator norm of the noise part is much larger than $S$, one can still effectively detect $S$. More specifically, we will provide the optimal detectability threshold, $\sqrt{n^{-1}\|T\|_{\text{op}}}$,  for $d_i$'s, as long as the spikes in $\Sigma$ is not $n^{1/4}$ times larger than $d_i$'s. Even when $\Sigma=I$, our threshold in terms of $T$ is more precise than the sufficiently large signal-to-noise ratio imposed in \cite{CCF21}. It particularly shows that the operator norm of the noise matrix $\Sigma X$ itself is not essential in the detectability of $d_i$'s. Instead, the essential threshold is given by $\sqrt{n^{-1}\|T\|_{\text{op}}}$ which could be significantly smaller than $\|\Sigma X\|_{\text{op}}$. We then take a step further to identify the precise fluctuation of the outliers of $H_1H_2^*$ around the limiting location. 

 Another typical scenario when a spiked singular value can be created by the noise is the heavy-tailed case. In this work, under the classical setting of $(\Sigma, T)=(I, \mathbbm{1})$, we will also consider the signal detection problem when $X$ entries only have the 2nd moment. It is known that when the 4th moment is absent, the largest eigenvalues of various classical Hermitian random matrix models will jump out of the support of the limiting spectral distribution, and form a Poisson process on a much larger scale. Especially, for the Wigner matrices and the sample covariance matrices, the largest eigenvalue is diverging and follows a Fr\'{e}chet law; see \cite{ABP09,Sosh04}. Actually, in this case, there are a diverging number of outliers in the spectrum of the Hermitian random matrices. Back to our signal-plus-noise model, in case $X$ only has the 2nd moment but not the  4th moment, similarly, there will be a diverging number of spikes from the noise if one considers the singular values of  $H$. In contrast, unlike the Hermitian random matrices, for which a 4th moment condition is necessary for the convergence of the largest eigenvalue as shown by the famous Bai-Yin law \cite{BY93}, a striking fact revealed by Bordenave, Chafa\"{i}, Garc\'{i}a-Zelada \cite{BCG22} is that a 2nd moment condition is already sufficient for the convergence of the spectral radius of the iid non-Hermitian random matrices. This fundamental difference again inspires us to use the asymmetrization approach to attack the signal detection for $H$ in the heavy-tailed case. In particular, we will show that for $H_1H_2^*$ or its linearization, only the signal part $S$ will create outlying eigenvalues, but the noise part $X$ will not, as long as the 2nd moment of $X$ entries exist. Our derivation in this part can be easily extended to the model of low rank deformation of iid non-Hermitian matrix. As a byproduct, we extend the fundamental limiting result of Tao \cite{Tao} regarding the outliers of i.i.d. random matrices to the minimal 2nd moment condition. This result was originally proved in \cite{Tao} under a 4th moment condition and was very recently extended in \cite{Han24} to the heavy-tailed regime when the deformation satisfies strong structural assumption; see Remark \ref{rmk.071201}.

We will primarily focus on the discussion of the heteroscedastic case when $(\Sigma, T)$ is general but $X$ is light-tailed. Later, we will extend our discussion to the heavy-tailed case, but for the classical setting $(\Sigma,T)=(I, \mathbbm{1})$. 

In general, we will work with the following non-Hermitian random matrix 
\begin{align}
\mathcal{Y}=\left(
\begin{array}{ccc}
~ & H_1\\
H_2^* &~
\end{array}
\right)=\left(
\begin{array}{ccc}
~ & \Sigma X_1\\
(\Sigma X_2)^* &~
\end{array}
\right)+\left(
\begin{array}{ccc}
~ & S\\
S^* &~
\end{array}
\right)=: \mathcal{X}+\mathcal{S}  \label{model}
\end{align}
which is a linearization of $H_1H_2^*$. Note that, due to the block structure,  the eigenvalues of $\mathcal{Y}$ are in pair. We denote the non-zero eigenvalues of $\mathcal{Y}$ by $\lambda_{\pm i}\equiv \pm \lambda_i$,  $i=1, \ldots, n\wedge p$. We make the convention that $\lambda_{1}, \ldots, \lambda_{n\wedge p}$ are those eigenvalues with arguments in $(-\pi/2, \pi/2]$. Further,  $\lambda_{1}, \ldots, \lambda_{n\wedge p}$ are in descending order (in magnitude). Due to the fact that we are considering real matrix, we have another symmetry of the eigenvalues, namely, all non-real eigenvalues in $\{\lambda_{1}, \ldots, \lambda_{n\wedge p}\}$ can find their complex conjugates in this collection. Hence, regarding the ordering according to magnitude, we further make the convention that the one with argument in $(0, \pi/2)$ is followed by its complex conjugate in $(-\pi/2, 0)$.

Throughout the paper, we will be working with the following assumption in the heteroscedastic case. 
\begin{assumption}\label{main assump}  We make the following assumptions. 

\vspace{1ex}
\noindent (i) (\textit{On dimensionality}): We assume that $p\equiv p(n)$ and $n$ are comparable, i.e.  there exists a constant $c$, such that 
    \begin{align*}
       c_n\equiv  \frac{p}{n} \rightarrow c  \in (0, \infty) \text{ as } n \rightarrow \infty.
    \end{align*}
  
  \vspace{1ex}  
\noindent (ii) (\textit{On signal $S$}): We assume that $S$ is a low rank matrix with rank $k$ and admits the singular value decomposition, i.e. 
    \begin{align*}
        S = \sum_{i=1}^k d_i u_i v_i^* =: UDV^*
    \end{align*}
   Here $k \geq 0$ is fixed, $D = \text{diag}(d_1, \ldots, d_r)$ with $C > d_1 \geq \ldots\geq d_k \geq  0$ for some constant $C>0$, and $u_i$'s and $v_i$'s are the associated unit left and right singular vectors, respectively. 

\vspace{1ex}
\noindent (iii) (\textit{On $\Sigma$}): We assume that $\Sigma$ is a rank $r$ perturbation of identity,  i.e. 
    \begin{align*}
        \Sigma = I + \sum_{j=1}^r \sigma_j\xi_j\theta_j^* =: I + \Xi\Delta\Theta^*, 
    \end{align*}
    where $r\geq 0$ is fixed; $\Delta = \text{diag}(\sigma_1, \ldots, \sigma_r)$ with $\|\Delta\|_{op} \leq n^{1/4-\varepsilon_0}$ for some small but fixed $\varepsilon_0 > 0$; $\xi_i$'s and $\theta_i$'s are the associated unit left and right singular vectors of the low rank matrix $\Sigma - I$, respectively. In particular, $\|\Sigma\|_{\text{op}}$ can be much larger than the constant order.  Finally, we always assume $\Sigma$ to be invertible and $\|\Sigma^{-1}\|_{op} \leq K$, for some constant $K > 0$. 

\vspace{1ex}
\noindent (iv) (\textit{On matrix $X$}): We assume that $X=(x_{ij})$ has independent entries and have general variance profile $\frac{1}{n}T=\frac{1}{n}(t_{ij})$. Specifically,  the entries $x_{ij}$ are real random variables with
    \begin{align*}
        \mathbb{E}x_{ij} = 0, \qquad \mathbb{E}x_{ij}^2 = \frac{t_{ij}}{n}.
    \end{align*}
We restate (\ref{flatness_assumption}) below:  there exist two positive constants $t_*\leq t^*$  such that
\begin{align*}
t_*\leq \min_{i,j} t_{ij} \leq \max_{i,j} t_{ij}\leq t^*. 
\end{align*} 
For simplicity, we further assume that all moments of $x_{ij}$'s exist, i.e.  for any integer $p\geq3$, there exists a constant $C_p>0$, such that
\begin{align*}
   \max_{i,j} \mathbb{E}|\sqrt{n}x_{ij}|^p\leq C_p<\infty.
\end{align*}
\end{assumption}

We can also derive the fluctuation of the outliers based on the following additional assumption.

\begin{assumption} \label{assump.fluctuation} Suppose that Assumption \ref{main assump} holds.  We further assume that $d_i$'s are well separated, i.e., 
   \begin{align*}
   \min_{i\neq j} |d_i-d_j|\geq c
   \end{align*}
   for some small constant $c>0$. 
\end{assumption}

In the heavy-tailed case, we make the following assumptions.

\begin{assumption}\label{main assump 2}  We assume that Assumption \ref{main assump} (i), (ii) still hold, and we assume $(\Sigma, T)=(I, \mathbbm{1})$. Further, we assume

\noindent (iv') (\textit{On matrix $X$}):  $X=(x_{ij})$ has iid real entries with
    \begin{align*}
        \mathbb{E}x_{ij} = 0, \qquad \mathbb{E}x_{ij}^2 = \frac{1}{n}.
    \end{align*}
\end{assumption}


\begin{rem} Another popular setting considered in the literature is $(\Sigma, T) = (\text{general}, \mathbbm{1})$. Here, by "general," we mean that $\Sigma$ is not necessarily a fixed-rank perturbation of $I$; see \cite{LPZZ22}, for instance. We expect similar phenomena to occur under this setting. However, technically, it requires a separate derivation. We choose to work under Assumption \ref{main assump} for the heteroscedastic case mainly to facilitate a more direct comparison with the results in \cite{CCF21}. It is also possible to extend our discussion to the case where the noise components of the two samples have different $(\Sigma, T)$, which can occur in reality when the variance profile of the noise depends on the sampling. We leave all these extensions for future discussion. For the heavy-tailed case, we choose the classical setting $(\Sigma, T) = (I, \mathbbm{1})$ due to technical reason. 
\end{rem}

For simplicity, in the sequel, we denote
\begin{align}
\sigma_{\max}:=\|\Sigma\|_{\text{op}}. \label{def of sigma_max}
\end{align}

Throughout this paper, we adopt the notion of stochastic domination introduced in \cite{EKY2013}, which allows a loss of boundedness, up to a small power of $N$, with high probability.
\begin{defn}(Stochastic domination) \label{stochastic dom} Let
\begin{align*}
X=\big(X_N(u): N\in \mathbb{N}, u\in U\big), \quad Y=\big(Y_N(u): N\in \mathbb{N}, u\in U_N\big)
\end{align*}
 be two families of real random variables, where $Y$ is nonnegative, and $U_N$ is a possibly $N$-dependent parameter set. We say that \textit{$X$ is stochastically dominated by $Y$, uniformly in $u$}, if for arbitrary small $\epsilon > 0$, and large $D > 0$,
 \begin{align*}
 \sup_{u\in U_N}\mathbb{P}\big(|X_N(u)|>N^{\epsilon}Y_N(u)\big)\leq N^{-D}
 \end{align*}
 for large $N\geq N_0(\epsilon, D)$. We write $X=O_\prec(Y)$ or $X\prec Y$ when $X$ is stochastically bounded by $Y$ uniformly in $u$. Note that in the special case when $X$ and $Y$ are deterministic, $X\prec Y$ means for any given $\epsilon>0$,  $|X_{N}(u)|\leq N^{\epsilon}Y_{N}(u)$ uniformly in $u$, for all sufficiently large $N\geq N_0(\epsilon)$.
 
 In addition, we say that an event $\mathcal{E}\equiv \mathcal{E}(N)$ holds with high probability if 
 \begin{align*}
 \mathbb{P}(\mathcal{E})\geq 1-N^{-D}
 \end{align*}
 for any large constant $D>0$ when $N$ is large enough. 
\end{defn}

In this paper, we will mainly focus on the eigenvalue behavior, but eigenvector can be studied via similar approach.
We state the eigenvalue results in the main part, and leave some discussions about the eigenvector to Appendix \ref{s eigenvector}. 

For the heteroscedastic case, our main results are stated as follows.  

\begin{thm}[First order limit--heteroscedastic case] \label{thm: first order}
	Suppose that Assumption \ref{main assump} holds. If there exists a positive integer $\tilde{k} \leq k$ such that $d_1 \geq \ldots \geq d_{\tilde{k}} \geq \sqrt{n^{-1}\|T\|_{op}+\delta}$ for any small (but fixed) $\delta>0$, then the spectrum of $\mathcal{Y}$ has $\tilde{k}$ pairs of outliers. Additionally, such outlying eigenvalues converge in probability to the strength of the signal $ d_i$ in magnitude. Specifically, for $i =1, \ldots, \tilde{k}$, with high probability, we have for any $\epsilon>0$, 
    \begin{align*}
        |\lambda_i-d_i|\leq n^{-\frac12+\epsilon}\sigma_{\max}^2. 
    \end{align*} 
\end{thm}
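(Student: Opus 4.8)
The plan is to treat the outlier eigenvalues of $\mathcal{Y}=\mathcal{X}+\mathcal{S}$ via the standard resolvent/determinant approach for finite-rank perturbations. Since $\mathcal{S}$ has rank $2k$ with the factorization $\mathcal{S} = \begin{pmatrix} 0 & UDV^* \\ VDU^* & 0\end{pmatrix}$, I would write $\mathcal{S} = \mathcal{A}\mathcal{D}\mathcal{B}^*$ for suitable $2n\times 2k$ (resp. $(n+p)\times 2k$) matrices $\mathcal{A},\mathcal{B}$ built from $U,V$ and a $2k\times 2k$ core $\mathcal{D}$ built from $D$. A complex number $z$ not in the spectrum of $\mathcal{X}$ is an eigenvalue of $\mathcal{Y}$ iff the master equation
\begin{align*}
\det\!\Big( \mathcal{D}^{-1} + \mathcal{B}^* \big(\mathcal{X}-z\big)^{-1}\mathcal{A}\Big)=0
\end{align*}
holds. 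So the first step is to obtain a sufficiently precise estimate of the $2k\times 2k$ random matrix $M(z):=\mathcal{B}^*(\mathcal{X}-z)^{-1}\mathcal{A}$, uniformly for $z$ in a neighbourhood of each target $d_i$ (in magnitude), say on a circle $|z-d_i\omega|=n^{-1/2+\epsilon}\sigma_{\max}^2$ for roots of unity $\omega$, or more simply for $z$ ranging over an $n^{-1/2+\epsilon}\sigma_{\max}^2$-contour around the real point $d_i$.

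The key analytic input is an isotropic local law for the resolvent of $\mathcal{X}=\begin{pmatrix}0 & \Sigma X_1\\ (\Sigma X_2)^* & 0\end{pmatrix}$. Because $\mathcal{X}$ is non-Hermitian, one cannot use a naive Hermitian local law; instead I would pass to the Hermitization $\begin{pmatrix} 0 & \mathcal{X}-z \\ (\mathcal{X}-z)^* & 0\end{pmatrix}$ and use the known anisotropic/isotropic laws for sample-covariance-type matrices with a general variance profile $T$ (the flatness assumption \eqref{flatness_assumption} guarantees the relevant deterministic equivalent — a Dyson-type system — is well-behaved with a single bulk). From this one extracts that, for $z$ at distance $\asymp n^{-1/2+\epsilon}\sigma_{\max}^2$ from the real axis but at an order-one distance from the edge of the bulk (which is where $d_i\gtrsim\sqrt{n^{-1}\|T\|_{\rm op}+\delta}$ puts us), the bilinear forms $\xi^*(\mathcal{X}-z)^{-1}\eta$ concentrate around their deterministic equivalents with error $O_\prec\big(n^{-1/2}\,/\,\mathrm{dist}(z,\mathrm{supp})\big)$. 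Plugging the structured vectors $u_i,v_i$ (and here one uses \emph{no} structural assumption on them, only $\|u_i\|=\|v_i\|=1$, which is exactly what isotropic laws deliver) yields
\begin{align*}
M(z) = M_0(z) + O_\prec(n^{-1/2}),
\end{align*}
where $M_0(z)$ is an explicit deterministic $2k\times 2k$ matrix whose entries are (up to the $\Sigma$-correction) essentially $-1/z$ times identity-like blocks; the $\Sigma$-dependence enters $M_0$ only at relative size $O(\sigma_{\max}^2/n)$ because $\Sigma=I+\Xi\Delta\Theta^*$ is a finite-rank, $o(n^{1/4})$-norm perturbation, and its effect on the deterministic equivalent is a rank-$r$ correction of that order. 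Hence the master equation becomes $\det(D^{-1}-z^{-1}I + \text{(error)})=0$, whose solutions are $z=d_i+O_\prec(n^{-1/2}\sigma_{\max}^2)$ by a Rouché / implicit-function argument on the contour, giving exactly $\tilde k$ pairs $\pm\lambda_i$ of outliers (the pairing coming from the block structure) and no others in the relevant region. Counting the outliers — showing there are neither more nor fewer than $\tilde k$ pairs — uses a continuity/degree argument in $d_i$ together with the fact that, outside the bulk, $\det M(z)$ has no spurious zeros because $M(z)\approx M_0(z)$ is uniformly invertible away from $\{d_i\}$.

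The main obstacle, I expect, is controlling everything uniformly in the regime where $\sigma_{\max}=\|\Sigma\|_{\rm op}$ is allowed to grow like $n^{1/4-\varepsilon_0}$: the noise operator norm $\|\Sigma X\|_{\rm op}\asymp\sigma_{\max}$ is then much larger than $d_i$, so a crude bound $\|(\mathcal{X}-z)^{-1}\|\le 1/\mathrm{dist}$ is useless near $z\asymp d_i$ — one must genuinely exploit that the large part of $\mathcal{X}$ lives on a fixed-dimensional subspace (the range of $\Xi,\Theta$) and that $d_i$ sits at an order-one distance from the bulk edge of the \emph{un-spiked} model $X_1,X_2$. Concretely, I would first replace $\Sigma$ by $I$ at the cost of a finite-rank update, prove the local law and the master equation for the unspiked model $\mathcal{X}_0=\begin{pmatrix}0&X_1\\X_2^*&0\end{pmatrix}$, and then re-introduce the $\Sigma$-spikes through a second application of the finite-rank determinant identity, carefully tracking that each such step only perturbs the location of the root by $O_\prec(n^{-1/2}\sigma_{\max}^2)$ — this bookkeeping of how the $\sigma_i$'s propagate into the error term, and verifying the threshold $\|\Delta\|_{\rm op}\le n^{1/4-\varepsilon_0}$ is exactly what keeps the error $o(1)$, is where the work concentrates.
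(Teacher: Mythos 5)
Your overall blueprint --- reduce to the master (determinant) equation for the rank-$2k$ deformation $\mathcal{S}$, control $M(z)=\mathcal{B}^*(\mathcal{X}-z)^{-1}\mathcal{A}$ at a point $z$ outside the bulk of the unspiked linearization, then locate the roots by a Rouch\'{e} argument --- is essentially the paper's. The paper writes $\mathcal{S}=W\mathfrak{D}W^*$, reduces $\det(\mathcal{Y}-\lambda)=0$ to $\det(I+\mathfrak{D}W^*(\mathcal{X}-\lambda)^{-1}W)=0$, and expands the blocks of $(\mathcal{X}-\lambda)^{-1}$ around $-\lambda^{-1}$, concluding by Tao's outlier argument rather than by an explicit Rouch\'{e}/implicit-function step, but that is a cosmetic difference.

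Two points to flag, though. First, the resolvent concentration you invoke as ``known isotropic laws for sample-covariance-type matrices'' is not off the shelf for this model: $X_1X_2^*$ is a product of two \emph{independent} samples (not a Gram matrix $XX^*$), has a general non-flat variance profile $\mathcal{V}$ built from $T$ with off-diagonal zero blocks, and is then deformed multiplicatively by $\Sigma$ whose operator norm diverges. The needed inputs are Propositions \ref{pro.spectralradius} and \ref{quadraticformbound}: the spectral-radius and $\|G\|_{\mathrm{op}}$ bounds come from analysing the Hermitized matrix Dyson equation (the coupled vector system (\ref{MDE2}) \`{a} la Alt--Erd\H{o}s--Kr\"{u}ger--Nemish), and the isotropic bound $|u^*\underline{G}(z)v|\prec n^{-1/2+\epsilon}$ on the \emph{centered} resolvent is proved from scratch via a cumulant-expansion recursive-moment argument. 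What you defer to the literature is in fact the technical heart of the paper.

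Second, your error bookkeeping for the $\Sigma$-contribution is off and, as stated, would not reproduce the claimed rate. You locate the $\Sigma$-effect in the deterministic matrix $M_0(z)$ at size $O(\sigma_{\max}^2/n)$; under Assumption \ref{main assump}(iii) that quantity is $\le n^{-1/2-2\varepsilon_0}$, strictly smaller than your stated $O_\prec(n^{-1/2})$ fluctuation, so your intermediate estimates only justify $\lambda_i=d_i+O_\prec(n^{-1/2})$, and the final line $d_i+O_\prec(n^{-1/2}\sigma_{\max}^2)$ does not follow from them. The actual mechanism is that the $\Sigma$-spikes amplify the \emph{fluctuating} part: after the Woodbury step one meets centered quadratic forms like $u_i^*\Sigma\,\underline{G}(z)\,\Sigma^*u_i$, and since Proposition \ref{quadraticformbound} gives $|\xi^*\underline{G}\eta|\prec n^{-1/2+\epsilon}\|\xi\|\,\|\eta\|$ for arbitrary deterministic $\xi,\eta$, the factor $\|\Sigma^*u_i\|^2\le\sigma_{\max}^2$ is what produces the $\sigma_{\max}^2$ in the theorem. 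This is also where the hypothesis $\sigma_{\max}\le n^{1/4-\varepsilon_0}$ is genuinely used --- it makes $n^{-1/2+\epsilon}\sigma_{\max}^2=o(1)$, which is what licenses the Neumann expansion (\ref{Neumann exp}) of $[I-\Gamma+z\Gamma L^*\underline{G}L]^{-1}$ that controls the remainder $\underline{\mathcal{E}}$ around (\ref{def of E underline}).
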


\begin{rem}  In the null case when  $S=0$ and $\Sigma=I$, we have a random matrix $\mathcal{Y}$ with independent entries and two diagonal 0 blocks.  For non-Hermitian matrix with general flat variance profile, i.e., the variances of all entries are comparable to order $n^{-1}$, it is known from \cite{AEK18} that the spectral radius of the matrix is given by the square root of the spectral radius of the variance profile. If the result extended to the case with 0 diagonal blocks, we would see that the spectral radius of $\mathcal{Y}$ exactly converges to $\sqrt{n^{-1}\|T\|_{op}}$, in the null case. This indicates that our threshold is optimal. It is possible to prove (in the null) the convergence of the spectral radius of $\mathcal{Y}$ to $\sqrt{n^{-1}\|T\|_{op}}$ by adapting the discussions in \cite{AEK18, AEK21}, for instance. But for our results on outliers, it is enough to show an upper bound of the spectral radius in the null case.
\end{rem}

In the sequel, we 
denote by $\vec{T}_{k \cdot}$ the $k$-th row of $T$, and by $\vec{T}_{\cdot k}$ the $k$-th column of $T$.

\begin{thm}[Second order fluctuation--heteroscedastic case] \label{thm: second order}
Suppose that Assumption \ref{assump.fluctuation}  holds. Recall that $u_i$ and $v_i$ are the left and right singular vectors of signal $S$ associated with singular value $d_i$. If $d_i \geq \sqrt{n^{-1}\|T\|_{op}+\delta}$, we have  the expansion to the fluctuation order 
\begin{align}
\lambda_i = d_i+u_i^*\Sigma X_1 v_i + v_i^* (\Sigma X_2)^*u_i+\frac{1}{\sqrt{n}}\mathfrak{g}(1+o_p(1)). \label{041001}
\end{align}
Here $\mathfrak{g}$ is independent of $u_i^*\Sigma X_1 v_i + v_i^* (\Sigma X_2)^*u_i$, and it is a centered Gaussian random variable with variance
\begin{align*}
    \text{Var}(\mathfrak{g}) &= \frac{n}{d_i^4}\sum_{\alpha \beta}\Bigg[\frac{d_i^2}{n^2}V(\Sigma^* u_iu_i^*\Sigma, \Sigma^* u_iu_i^*\Sigma)M(T, \alpha, \beta) + \frac{2}{n^3}V(\Sigma^* u_iu_i^*\Sigma, v_iv_i^*)_{\alpha, \beta}N(TT^*, \alpha, \beta) \\
    &\qquad + \frac{d_i^2}{n^2}V(v_iv_i^*, v_iv_i^*)_{\alpha, \beta}M(T^*, \alpha, \beta) \Bigg],
\end{align*}
where
\begin{align*}
    &M(T, \alpha, \beta) = \vec{T}_{\alpha \cdot} \left[I-\frac{1}{n^{2}|z|^{2}}\left(T^*T\right)\right]^{-1}\vec{T}_{\beta \cdot} , \notag
    \\
    &N(TT^*, \alpha, \beta) = \Bigg(TT^*\left[I-\frac{1}{n^{2}|z|^{2}}\left(TT^*\right)\right]^{-1}\vec{T}_{\cdot \beta}\Bigg)_{\alpha} , \notag \\
    &V(pq^*, rs^*)_{\alpha, \beta} = (pq^*)_{\alpha\alpha}(rs^*)_{\beta\beta}
\end{align*}
for some vector $p, q, r, s \in \mathbb{R}^p$. 
\end{thm}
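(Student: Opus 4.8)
The plan is to follow the classical route for outlier fluctuations — a Woodbury reduction to a scalar self-consistent equation, an anisotropic local law to expand the relevant resolvent bilinear forms, and a CLT for the quadratic part — executed carefully enough to accommodate both the block structure of $\mathcal{Y}$ and the possibly large spikes carried by $\Sigma$.

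\textbf{Master equation.} Write $\mathcal{S}=P\,\mathrm{diag}(D,D)\,Q^{*}$ with $P=[\mathbf{u}_{1},\dots,\mathbf{u}_{k},\mathbf{v}_{1},\dots,\mathbf{v}_{k}]$, $Q=[\mathbf{v}_{1},\dots,\mathbf{v}_{k},\mathbf{u}_{1},\dots,\mathbf{u}_{k}]$, where $\mathbf{u}_{i}=(u_{i},0)^{\top}$ and $\mathbf{v}_{i}=(0,v_{i})^{\top}$ are the embedded singular vectors. For $\lambda$ off the spectrum of $\mathcal{X}$, $\det(\mathcal{Y}-\lambda)=\det(\mathcal{X}-\lambda)\det(I_{2k}+\mathrm{diag}(D,D)\,Q^{*}(\mathcal{X}-\lambda)^{-1}P)$, so every outlier of $\mathcal{Y}$ solves $\det(I_{2k}+\mathrm{diag}(D,D)\,Q^{*}(\mathcal{X}-\lambda)^{-1}P)=0$. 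Since the $d_{i}$'s are $c$-separated, near $\lambda=d_{i}$ a Schur complement against the non-degenerate $(j\ne i)$ blocks reduces this to a scalar equation $f_{i}(\lambda)=0$, where $f_{i}$ is built from the four bilinear forms $\mathbf{w}_{a}^{*}(\mathcal{X}-\lambda)^{-1}\mathbf{w}_{b}$, $\mathbf{w}\in\{\mathbf{u}_{i},\mathbf{v}_{i}\}$; its deterministic skeleton $1-d_{i}^{2}/\lambda^{2}$ has its unique positive zero at $d_{i}$, recovering Theorem~\ref{thm: first order}. Using the anti-diagonal block form of $\mathcal{X}$ together with another Schur complement, these four bilinear forms become bilinear forms of the resolvents $(I-z^{-1}W)^{-1}$ and $(I-z^{-1}\widetilde{W})^{-1}$ at $z=\lambda^{2}$, with $W=\Sigma X_{1}X_{2}^{*}\Sigma^{*}$, $\widetilde{W}=X_{2}^{*}\Sigma^{*}\Sigma X_{1}$, together with two ``mixed'' forms carrying one extra factor $\Sigma X_{1}$ resp.\ $(\Sigma X_{2})^{*}$.

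\textbf{Expanding the bilinear forms.} The analytic input is an anisotropic local law for the resolvent of $\mathcal{X}$: as $\mathcal{X}$ is non-Hermitian with a flat variance profile, the existing local-law technology (in the spirit of \cite{AEK18,AEK21}), combined with peeling off the rank-$O(r)$ operator $\Sigma-I$ by an additional low-rank perturbation identity (this is where $\|\Delta\|_{\mathrm{op}}\le n^{1/4-\varepsilon_{0}}$ is used to keep the correction under control), shows that each of these forms concentrates around a deterministic equivalent up to an error that is $o_{p}(n^{-1/2})$. Since $n^{-1}\Tr W^{m}\to0$ for every $m\ge1$, the deterministic equivalents of the two ``diagonal'' forms are just $-\lambda^{-1}$ and those of the ``mixed'' ones vanish, modulo $O(n^{-1})$. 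I then split the fluctuation of each form into (i) its first-Wiener-chaos (linear-in-entries) part — carried only by the mixed forms, namely $-\lambda^{-2}u_{i}^{*}\Sigma X_{1}v_{i}$ and $-\lambda^{-2}v_{i}^{*}(\Sigma X_{2})^{*}u_{i}$ up to $o_{p}(n^{-1/2})$; and (ii) a higher-chaos part built from the quadratic forms $u_{i}^{*}Wu_{i}$, $v_{i}^{*}\widetilde{W}v_{i}$ and all their higher analogues $z^{-m}u_{i}^{*}W^{m}u_{i}$, $z^{-m}v_{i}^{*}\widetilde{W}^{m}v_{i}$ produced by the resolvent expansion.

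\textbf{CLT and variance.} The assembled higher-chaos part, multiplied by $\sqrt{n}$, converges to a centred Gaussian $\mathfrak{g}$; this follows from a martingale (or moment-method) CLT using the moment bounds of Assumption~\ref{main assump}(iv). Its variance is obtained by pairing the independent entries of $X_{1}$ and $X_{2}$: the $X_{1}$--$X_{1}$ pairings yield the $M(T,\alpha,\beta)$ terms, the $X_{2}$--$X_{2}$ pairings the $M(T^{*},\alpha,\beta)$ terms, and the $X_{1}$--$X_{2}$ cross pairings the $N(TT^{*},\alpha,\beta)$ terms. The resummation over $m$ — needed because $\mathrm{Cov}(u_{i}^{*}W^{m}u_{i},u_{i}^{*}W^{m'}u_{i})$ is of the same order as $\mathrm{Var}(u_{i}^{*}Wu_{i})$ whenever $m\equiv m'\pmod 2$ — converts each bare kernel $\sum_{j}t_{\alpha j}t_{\beta j}$ into the geometric series $[I-\tfrac{1}{n^{2}|z|^{2}}T^{*}T]^{-1}$ and its companions, which is exactly the stated $\mathrm{Var}(\mathfrak{g})$. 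The hypothesis $d_{i}^{2}>n^{-1}\|T\|_{\mathrm{op}}+\delta$, i.e.\ $|z|>n^{-1}\|T\|_{\mathrm{op}}+\delta$, enters here precisely to guarantee $\tfrac{1}{n^{2}|z|^{2}}\|T^{*}T\|_{\mathrm{op}}\le 1-c$, so these series converge and the inverses are bounded. Independence of $\mathfrak{g}$ from $u_{i}^{*}\Sigma X_{1}v_{i}+v_{i}^{*}(\Sigma X_{2})^{*}u_{i}$ follows since the two sit in orthogonal Wiener chaoses, hence are uncorrelated, and the joint CLT upgrades this to asymptotic independence.

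\textbf{Solving, and the main obstacle.} Substituting the expansions into $f_{i}(\lambda)=0$ and using $\partial_{\lambda}(1-d_{i}^{2}/\lambda^{2})|_{\lambda=d_{i}}=2/d_{i}\ne0$, a fixed-point / implicit-function argument gives $\lambda_{i}=d_{i}+c_{0}\,(u_{i}^{*}\Sigma X_{1}v_{i}+v_{i}^{*}(\Sigma X_{2})^{*}u_{i})+n^{-1/2}\mathfrak{g}(1+o_{p}(1))$ with an explicit scalar $c_{0}$ read off from the block formulas, all accumulated errors being $o_{p}(n^{-1/2})$ uniformly. The hard part is the expansion step in the regime where $\Sigma$ itself spawns outliers of $W$ exceeding $z\approx d_{i}^{2}$: there the Neumann series for $(I-z^{-1}W)^{-1}$ diverges in operator norm, so one must strip those spurious outlier directions via one more Woodbury step while simultaneously controlling the ``bulk'' resolvent $(I-z^{-1}X_{1}X_{2}^{*})^{-1}$ through an anisotropic local law for a product of two independent rectangular matrices with a variance profile — a local law that itself has to be proved, via a moment or cumulant expansion — and then check that the resummed series of the previous step actually converges. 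This is exactly where both $\|\Delta\|_{\mathrm{op}}=o(n^{1/4})$ and $|z|>n^{-1}\|T\|_{\mathrm{op}}+\delta$ are genuinely needed.
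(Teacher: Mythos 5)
Your high-level skeleton (determinant master equation, Schur reduction to a scalar equation near $\lambda=d_i$, expansion of the resolvent bilinear forms, CLT for the higher-chaos part, implicit function step to solve) matches the paper's route: the paper's Section~\ref{Section: second order} starts from (\ref{102901}), expands around $d_i$, and reduces everything to the Green-function quadratic forms in Proposition~\ref{jointCLT} plus the linear term $w_i^*\mathcal Xw_i$. Your identification of the linear chaos as the $u_i^*\Sigma X_1 v_i+v_i^*(\Sigma X_2)^*u_i$ term and your reading of where $\|\Delta\|_{\mathrm{op}}\le n^{1/4-\varepsilon_0}$ enters (controlling the rank-$r$ Woodbury correction for $\Sigma-I$) are both correct. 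However, the technical route you propose for the CLT and variance is different from the paper's, which \emph{deliberately} avoids the moment/Neumann-resummation machinery: the paper proves both the a priori bound $u^*\underline G v=O_\prec(q_n^{-1})$ (Proposition~\ref{quadraticformbound}) and the joint Gaussianity (Proposition~\ref{jointCLT}) by cumulant expansion applied directly to Green functions, computing the limiting characteristic function $\mathbb E\exp(\ii tQ\chi_G)$ via the ODE (\ref{010322}) and deriving the kernel $[I-\tfrac{1}{n^2|z|^2}T^*T]^{-1}$ from the self-consistent equation (\ref{M sc eqt}), not from a resummed power series. This is a genuinely different, and arguably cleaner, route than the martingale / pairing / geometric-series argument you sketch.

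The substantive problem is your final paragraph. You take as the main obstacle ``the regime where $\Sigma$ itself spawns outliers of $W=\Sigma X_1X_2^*\Sigma^*$ exceeding $z\approx d_i^2$'' and propose to strip them by an extra Woodbury step. But this regime does not occur, and recognizing that it cannot occur is precisely the crux of the paper: Proposition~\ref{pro.spectralradius} proves $\rho(\Sigma X_1X_2^*\Sigma^*)\le n^{-1}\|T\|_{\mathrm{op}}+\delta$ w.h.p.\ even when $\|\Sigma\|_{\mathrm{op}}$ is as large as $n^{1/4}$ --- the asymmetric product of two \emph{independent} samples cancels the spikes of $\Sigma$, and this is exactly why the method works. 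The real difficulty is not the presence of outliers of $W$ but the fact that $W$ is highly non-normal ($\rho(W)$ bounded, $\|W\|_{\mathrm{op}}$ as large as $\sigma_{\max}^2$), so an operator-norm Neumann expansion of $(W-z)^{-1}$ is not available; the paper circumvents this by first establishing the spectral radius bound through the (vector) Dyson equation (Section~\ref{s.spectralradius}) and then controlling quadratic forms via cumulant expansion, never relying on a convergent operator-norm power series. Your sketch, which hypothesizes spurious outliers and asks whether the ``resummed series'' converges, would not establish the theorem without first proving the spectral radius bound, and that step is not a small technical aside but the central structural fact the proof must begin from.
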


\begin{rem} Note that the fluctuation order  is not necessarily order $1/\sqrt{n}$ due to the possible $n$-dependence of $\sigma_i$'s. It could be as large as  $n^{-1/2}\sigma_{\max}^2$, depending on $\Sigma^*u_i$. 
The above theorem reveals a non-universal feature of the limiting distribution of the outlier, which has been previously observed in other Hermitian or non-Hermitian model; see \cite{CD08,KY13, Ra15} for instance. Particularly, the distribution of the outlier is (asymptotically) a convolution of  the distribution of a linear combination of $X_1, X_2$ entries and a Gaussian, which may not be Gaussian in case $u_i, v_i, \Sigma^* u_i$ are all localized, i.e., only a fixed number of components of them are nonzero. But apart from this case, the limiting distribution is still Gaussian by CLT. 
\end{rem}

\begin{rem} Our discussion can also be easily extended to the case when there are some multiple $d_i$, i.e, some of $d_i$'s are equal, while the distinct $d_i$'s are sufficiently well separated. In this case, a supercritical $d_i$ with multiplicity $k_i$ will create $k_i$ corresponding outliers $\lambda_i$'s. The joint distribution of these $k_i$ eigenvalues is given by that of the eigenvalues of a $k_i$ by $k_i$ random matrix, whose entry distribution can also be analyzed by our derivations. As a consequence, on fluctuation level, some of these $\lambda_i$'s will be truly complex, i.e., the imaginary part is not $0$. It can also be seen in our simulation study in Section \ref{s.simulation} and Appendix \ref{s eigenvector}.  For brevity, we leave the detailed discussion to future study. 
\end{rem}

Next, we state our result in the heavy-tailed case. In this case, we provide first order result for eigenvalues only. 

\begin{thm}[First order limit--heavy tailed case]\label{thm: first order opt}
	Suppose that Assumption \ref{main assump 2} holds. If there exist a positive integer $\tilde{k}\le k$  such that $d_{1}\ge\cdots\ge d_{\tilde{k}}>\sqrt{(p/n)^{1/2}+\delta}$ for small (but fixed) $\delta>0$.
Then,  for $i = 1, \ldots, \tilde{k}$, the following estimate holds in probability for large $n$: 
\begin{equation*}
	|\lambda_{i}-d_{i}|=o(1).
\end{equation*}
\end{thm}

We wish to highlight that our proof strategy for Theorem \ref{thm: first order opt} carries over to the classical low-rank deformation of an i.i.d.\ non-Hermitian random matrix, whose structure is even simpler. As a consequence, we strengthen the result of \cite{Tao} to the optimal 2nd moment assumption, stated in the following theorem. 

\begin{thm}\label{thm: out iid}
	Let $X$ be $n\times n$ random matrix with iid entries with mean 0 and variance $\frac{1}{n}$. Let $k$ be a fixed positive integer. Choose any constant $\delta>0$. Let $s^{*}>1+\delta$ be a constant. Consider a deterministic $n\times n$ matrix $C$ of bounded rank and bounded operator norm such that $\textnormal{rank}(C)=k$ and $\lVert C\rVert\le s^{*}$. Assume that for all sufficiently large $n$ there are $\tilde{k}$ eigenvalues of $C$, denoted by $c_{1},\cdots,c_{\tilde{k}}$, in the region $\{z:|z|>1+\delta\}$, and all other eigenvalues of $C$ are included in the region $\{z:|z|\le 1+\delta/3\}$. Then, for every $j\in\{1,2,\cdots,\tilde{k}\}$, there is exactly one eigenvalue of $X+C$ which converges to $c_{i}$ in probability. Moreover, in probability, for large $n$, all other eigenvalues of $X+C$ are included in the region $\{z:|z|< 1+\delta/2\}$.
\end{thm}

\begin{rem} We remark here that the eigenvalues of the deformation $C$ in the above theorem are not necessarily real. Actually, for our block non-Hermitian matrix $\mathcal{Y}$ in (\ref{model}), all our argument can be easily extended to the case when $\mathcal{S}$ is non-Hermitian and has complex eigenvalues. But from the application point of view, we display our results and derivations for the case when $\mathcal{S}$ only has real eigenvalues. 
\end{rem}

\begin{rem} \label{rmk.071201}
	In \cite{Han24}, Han obtained the same first-order limit for the outliers under the same 2nd-moment condition, but imposed strong structural conditions on the perturbation $C$: either the number of its non-zero entries is bounded, or every entry is $O(n^{-1})$.
	In contrast, our result applies to any perturbation $C$ of bounded rank and bounded operator norm, without further restrictions.
\end{rem}

\subsection{Proof Strategy} 

In this section, we briefly describe our proof strategy for the main results. 

We will start with the heteroscedastic case. We shall first show that, in the null case $(S, \Sigma)=(0, I)$, there is no outlier. More specifically, we denote by 
\begin{align}
	\mathcal{X}_0=\left(
\begin{array}{ccc}
~ & X_1\\
X_2^* &~
\end{array}
\right), \label{def of X_0}
\end{align}
which can be regarded as a linearization of $X_1X_2^*$. 
The variance profile of $\mathcal{X}_0$ is
\begin{align}
	\mathcal{V}=\frac{1}{n}\left(
\begin{array}{ccc}
~ &T\\
T^* &~
\end{array}
\right). \label{variance profile}
\end{align}
 This non-Hermitian matrix model can be regarded as a special case of the models considered in \cite{AEKN18}, where the authors consider a general Kronecker random matrix, which is a linear combination of Kronecker products of deterministic matrices and random matrices with independent entries but general variance profile.  The results in \cite{AEKN18} shows that under rather general assumption, the spectrum of the Kronecker random matrix is contained in the self-consistent $\tau$-pseudospectrum for any $\tau>0$. In our case, the self-consistent $\tau$-pseudospectrum is defined as  
\begin{align*}
	\mathbb{D}_{\tau}:=\{z\in \mathbb{C}:\text{dist}(0,\text{supp }\rho^z)\leq \tau\},
\end{align*}
where $\rho^z$ is  the so called self-consistent density of states, which is the deterministic approximation of the spectral distribution  of $\mathcal{X}_0-z$'s Hermitization, i.e., 
\begin{align*}
	\mathbf{H}^z=\left(
\begin{array}{ccc}
~ & \mathcal{X}_0-z\\
\mathcal{X}_0^*-\bar{z}
 &~
\end{array}
\right).
\end{align*}
One calls $\text{supp }\rho^z$ the self-consistent spectrum of $\mathbf{H}^z$. Hence, in order to prove that for any $\delta>0$, the spectral radius of $\mathcal{X}_0$ is bounded by $\sqrt{n^{-1}\|T\|_{\text{op}}+\delta}$ and thus that of $X_1X_2^*$ is bounded by $n^{-1}\|T\|_{\text{op}}+\delta$, it suffices to show that $z\not\in \mathbb{D}_{\tau}$ for some $\tau>0$, given that $|z|\geq \sqrt{n^{-1}\|T\|_{\text{op}}+\delta}$. Such a conclusion can be obtained by analyzing the Hermitized Dyson equation following the strategy in \cite{AEK18}. In our case, the Dyson equation boils down to a system of  four vector equations.  The analysis of the Hermitized random matrix $\mathbf{H}^z$ does not only lead to the precise upper bound of the spectral radius of $\mathcal{X}_0$ and $X_1X_2^*$, it also provides the high probability upper bound of the operator norms of the Green function
\begin{align*}
\|G(\lambda)\|_{\text{op}}\equiv \|(X_1X_2^*-\lambda)^{-1}\|_{\text{op}} \leq C, \qquad \text{if  } \lambda\geq n^{-1}\|T\|_{\text{op}}+\delta. 
\end{align*}

We then proceed with studying the case $S=0$ but $\Sigma$ is with deformation as defined in (\ref{022701}). In this case, we consider the noise part $\mathcal{X}$ in (\ref{model}), which can be regarded as a multiplicative deformation of $\mathcal{X}_0$. Our aim is to show that as long as $\sigma_{\max}\leq n^{1/4-\varepsilon_0}$ for some small constant $\varepsilon_0>0$, the results  proved for $\mathcal{X}_0$, including the upper bound of the spectral radius and also the upper bound for Green function,  still hold. That means, the multiplicative deformation does not change the spectrum of $\mathcal{X}_0$ significantly. In particular, we show the spectral radius of $\mathcal{X}$ is also bounded by $\sqrt{n^{-1}\|T\|_{\text{op}}+\delta}$ w.h.p, and  the operator norm of the Green function of $\Sigma X_1X_2^*\Sigma^*$ is order $1$.  To this end, we shall prove that $\det(\mathcal{X}-z)$ is uniformly nonzero for all $|z|\geq \sqrt{n^{-1}\|T\|_{\text{op}}+\delta}$. Based on the result for $\mathcal{X}_0$, it will be sufficient to show the smallness of the following centered quadratic form of Green function of $X_1X_2^*$: For any deterministic unit vectors $u$ and $v$, any $|\lambda|\geq n^{-1}\|T\|_{\text{op}}+\delta$, and any small constant $\varepsilon>0$
\begin{align}
|u^*\underline{G}(\lambda)v|\prec n^{-1/2+\varepsilon}, \qquad \underline{G}(\lambda)=G(\lambda)+\frac{1}{\lambda}.  \label{022710}
\end{align}
The above type of estimates has been considered in previous literature, such as \cite{Tao,Ra15, BC16, CCF21}, for other non-Hermitian random matrix models under various assumptions and with varying levels of precision. But all of them are carried out by rather delicate combinatorial moment method after applying a Neumann expansion. In our work, we 
prove (\ref{022710}) via a rather robust cumulant expansion approach, which is applied to the Green function directly. The cumulant expansion approach has been widely used to study Green function in random matrix theory; see \cite{KKP, LS18, HK17, EKS19} for instance. It is a bit surprising that it seems it has never been used to study the limiting behavior of the outliers of non-Hermitian matrix in the literature, despite the fact that it has been widely used for the counterpart of the Hermitian matrices; see the recent survey \cite{BHY} and the reference therein.  

After establishing these bounds for $\mathcal{X}_0$ and $\mathcal{X}$, we then turn to investigate the outliers of our model  $\mathcal{Y}$, created by the signal part $\mathcal{S}$. The asymptotic behaviour of the outliers of $\mathcal{Y}$ eventually boils down to the analysis of the quadratic form $u^*\underline{G}(\lambda)v$. From the estimate in (\ref{022710}) one can conclude Theorem \ref{thm: first order} via a standard argument in \cite{Tao}. The proof of the eigenvector result in Theorem \ref{thm: eigenvector} can be done similarly. 

Regarding the fluctuation, we find that $\lambda_i-d_i$ can be written as a linear combination of Green function quadratic forms of the form $u^*\underline{G}(\lambda)v$ and $\psi^*X_a\phi, a=1,2$ for various different choices of $u$, $v$, $\psi$ and $\phi$. A key fact is that $\psi^*X_a\phi$ is not necessarily asymptotic Gaussian, depending on the structure of $u$ and $v$, but $u^*\underline{G}(\lambda)v$ is asymptotically Gaussian.  Hence, we shall show both the asymptotic Gaussianity of the linear combinations of  $u^*\underline{G}(\lambda)v$'s and the asymptotic independence between it and the $\psi^*X_a\phi$ terms.  To this end, we turn to study the joint characteristic function of $u^*\underline{G}(\lambda)v$'s and $\psi^*X_a\phi$'s and study its limit. Again, the limiting characteristic function is obtained via the robust cumulant expansion approach. In contrast, even in the simpler case of iid non-Hermitian random matrix, the distribution of the outlier was previously obtained in \cite{Ra15, BC16} via a rather involved moment method. Our approach is much more straightforward, and robust under the general variance profile assumption, thanks to the inputs from \cite{AEKN18}.

We now outline the strategy when only a finite 2nd moment is assumed (the ``heavy-tailed'' case).
First, as in Theorem \ref{thm: first order} of the light-tail regime, we follow the argument in \cite{Tao}. We first start with the null case, i.e., $\mathcal{X}_0$ and study its spectral radius. Staring from the characteristic polynomial approach in \cite{BCG22},
we introduce a truncation of the matrix model and 
replace the original matrix by its truncated version,
which requires a comparison between the characteristic polynomials of these two matrices. The limiting behaviour of the characteristic polynomial of the truncated matrix is governed by the joint behaviour of the tracial quantities of the truncated $\mathcal{X}_0$. Instead of using a combinatorial argument for the tracial quantites in \cite{BCG22}, we can again rely on a very robust cumulant expansion approach, as we used in the heteroscedastic case for the quadratic forms.  After we obtain the spectral radius of $\mathcal{X}_0$, we then turn to study the outliers of $\mathcal{X}_0$, created by the signal $\mathcal{S}$.
A key technical hurdle is bounding the modulus of the characteristic polynomial of $\mathcal{X}$ both from below and above.
Especially for the upper bound, a key novel input is the following determinant identity
\begin{equation}\label{eq: 488}
	\det(\mathsf{A}+\mathsf{B}) = \sum_{\substack{I,J\subseteq\{1,\dots,n\} \\ |I|=|J|}}(-1)^{s(I,J)}\det([\mathsf{A}]_{I,J})\det([\mathsf{B}]_{I^{c},J^{c}}), \quad
	\mathsf{A},\mathsf{B}\in\mathbb{C}^{n},
\end{equation}
where \([W]_{I,J}:=(w_{ij})_{i\in I,\;j\in J}\) denotes the \((I,J)\)-submatrix of a
matrix \(W=(w_{ij})\), and \(s(I,J)\) is determined by the sign of the permutation that
first lists the indices in \(I\) followed by those in \(I^{c}\) and maps this
ordering to the analogous ordering determined by \(J\cup J^{c}\).
We find that the identity \eqref{eq: 488} is especially effective here
as long as the perturbation has both bounded rank and bounded operator norm.
The same strategy extends readily to the more classical model, that is, a low-rank perturbation of a non-Hermitian random matrix with i.i.d.~entries.

\subsection{Notation} Throughout this paper, we regard $n$ as our fundamental large parameter. Any quantities that are not explicit constant or fixed may depend on $n$; we often omit the argument $n$ from our notation.  We further use   $\|A\|_{\text{op}}$ for the operator norm of a matrix $A$.  We use $C$ or $K$ to denote some generic (large) positive constant.   For any positive integer $m$, let $\llbracket m\rrbracket$ denote the set $\{1,\ldots,m\}$. We use $\mathbf{1}$ to denote the all one vector, whose dimension is often apparent from the context, and thus is omitted from the notation. For any vector $u\in \mathbb{C}^m$, we denote $\langle u\rangle$ the average of its components.

\subsection{Organization} The rest of the paper is organized as follows. In Section \ref{s.simulation}, we propose an approach to detect the signal and discuss some simulation study.  In Section \ref{pf of first order}, we prove Theorem \ref{thm: first order}, based on Propositions \ref{pro.spectralradius} and \ref{quadraticformbound}. In Section \ref{Section: second order}, we prove Theorem \ref{thm: second order}, based on  Proposition \ref{jointCLT}, which will be proved in Section \ref{section. proof of CLT}. Theorem \ref{thm: first order opt} is proved in Section \ref{sec: 997}
{The proofs of Propositions \ref{pro.spectralradius}, \ref{quadraticformbound}, Theorem \ref{thm: out iid}  and some technical lemmas are stated in Appendices \ref{s.spectralradius}, \ref{s.proof of quadratic form bound}, \ref{sec: 2425}, and \ref{app.lemmas} respectively. In Appendix \ref{s eigenvector}, we state and prove the first order behaviour of the eigenvector projection for the heteroscedastic case.

\section{Simulation study} \label{s.simulation}

In this section, we present some simulation study, in order to compare the singular value approach based on the symmetric model  and the eigenvalue approach based on the asymmetric model.

We may apply our result in the following way. We define the following random domain, which is purely data based
\begin{align*}
D_N:=\Big\{ z\in \mathbb{C}: \Re z\geq \lambda_{\max}^{s}+N^{-1/2}\Big\}, 
\end{align*}
where we used the notation
\begin{align}
\lambda_{\max}^{s}:=\max_{i}|\lambda_i|\mathbf{1}\Big(\text{arg}(\lambda_i)\in [\frac{\pi}{\log N}, \frac{\pi}{2}]\Big). \label{042501}
\end{align}
Our detection criterion is as follows: If $\lambda_i\in D_N$, we identify it  as a signal . The rough idea is to check if a leading eigenvalue (in magnitude), $\lambda_i$,  is significantly away from the threshold. 
Here we use the purely data based $\lambda_{\max}^{s}$ as a random approximation of the limiting spectral radius, $\sqrt{n^{-1}\|T\|_{\text{op}}}$, which is unknown in reality. Such an approximation is possible since  the limiting spectral distribution of $\mathcal{X}_0$ is rotationally symmetric, which  is a consequence of the fact that our Matrix/Vector Dyson equation in (\ref{MDE2}) depends on $|z|^2$ only; see the discussion in Section \ref{s.spectralradius}. Hence, $\lambda_{\max}^s$ serves as a random approximation of the true threshold. Also notice that, according to our main results, the outliers are almost real, i.e, close to $d_i$'s. Hence, $\mathbf{1}\Big(\text{arg}(\lambda_i)\in [\frac{\pi}{\log N}, \frac{\pi}{2}]\Big)$ in the definition of $\lambda_{\max}^{s}$ can help us avoid the true signals in the definition of the approximate threshold. Our choice of the additional shift $N^{-1/2}$ is inspired by the recent result of the iid random matrix \cite{CEX23}. Especially, in \cite{CEX23}, it is known that the fluctuation of the spectral radius of iid random matrix (without deformations) is of order $o(N^{-1/2})$.  It is not clear if this fluctuation order still applies to our model with a block structure, also with a multiplicative perturbation $\Sigma$. But we expect that it should be still true at least  when $\sigma_{\max}\sim 1$. Certainly, if $d_i$ is sufficiently away from the threshold, by a constant order distance $\epsilon>0$, as we assumed in our main theorems, we do not have to choose a correction as delicate as $N^{-1/2}$ in the definition of $D_N$. Such a choice is mainly for the detection of those weak signals close to the threshold. The reason why we start from $\pi/\log N$ in the definition of $\lambda_{\max}^s$ is because that on fluctuation level, the outliers can indeed have nonzero imaginary part, when we have some multiple $d_i$'s or close $d_i$'s. But in any case, the fluctuation order is no larger than $N^{-\varepsilon}$ according to our assumption of $\sigma_{\max}$. Hence, the choice of the lower phase bound $\pi/\log N$ in (\ref{042501}) is enough to distinguish the true signals from the other eigenvalues.

 In the simulation study, we fix $(p, n)=(800, 2000)$. We consider two settings of variance profile $T$, 
\begin{align*}
T_1=\mathbbm{1}, \qquad T_2= (I_{p/2}\oplus 1.5 I_{p/2})\mathbbm{1}.
\end{align*}
We further consider the following two distribution types of $x_{ij}$: (i) $\sqrt{n}x_{ij}\sim N(0, t_{ij})$; (ii) $\sqrt{n}x_{ij}$ follows Student's t distribution with degree of freedom $\nu=2.2$, normalized to be mean 0 and variance $t_{ij}$. 
For the choices of $S$, we primarily consider the case of simple $d_i$, but we also perform simulation for multiple $d_i$. Specifically, we choose 
\begin{align*}
S= d_1e_3\tilde{e}_4^*+d_2e_4\tilde{e}_5^*+d_3e_5\tilde{e}_6^*
\end{align*}
and consider various choices of $(d_1,d_2,d_3)$. Here $e_i\in \mathbb{R}^p, \tilde{e}_j\in \mathbb{R}^n$ represent the standard basis of respective dimensions.   For $\Sigma$, we consider 
\begin{align*}
\Sigma=I+\sigma_1 e_1e_1^*+\sigma_2e_2e_2^*.
\end{align*}

All the figures are displayed in Appendix \ref{s.fig}.

In all figures, we simply call all singular values/eigenvalues which are close to the support of the limiting singular value/eigenvalue distributions the {\it singular values/eigenvalues}, and we call those away from the support of limiting distributions the {\it outlying singular values/ outlying eigenvalues}. We determine if an eigenvalue is an outlying eigenvalue by the criteria if $\lambda_i\in D_N$, and we also color the negative copy of it. In all eigenvalue figures, we denote by $\text{thre}_s$ the random threshold $\lambda_{\max}^s$. We determine if a singular value is an outlying one  by simply checking if it is away from the theoretical right end point of (general) Marchenko Pastur law.

In Fig \ref{fig:Gaussian_iid_SV}, we plot the singular values of $H$ under the choice  $(T, X)=(T_1, \text{Gaussian})$, and in Fig \ref{fig:Gaussian_iid_EV}, we plot the eigenvalues of $\mathcal{Y}$ (c.f. (\ref{model})) under the same choice of $(T,X)$. In both figures, we choose simple $d_i$'s. The results for multiple $d_i$'s are presented in Fig \ref{fig:Gaussian_iid_SV_multiple} and \ref{fig:Gaussian_iid_EV_multiple}. In Fig \ref{fig:Gaussian_general_SV} and \ref{fig:Gaussian_general_EV}, we consider the setting $(T, X)=(T_2, \text{Gaussian})$.  From these figures, we can clearly see that the eigenvalue approach can always detect the signals and also the locations of the outlying eigenvalues precisely tell the value of $d_i$ which are above the threshold. In contrast, 
the spiked $\Sigma$ can create additional outliers when one use the singular value approach, and thus it could be falsely detected as signals. 

%
%
%
%
%
%
%
%
%
%
%
%
%
%
%
%
%
%
%
%
%
%
%
%
%
%
%

In light of Theorem \ref{thm: first order opt}, we also present the simulation results in Fig. \ref{fig:Heavy_iid_SV} and Fig. \ref{fig:Heavy_iid_EV} under the choice $(T, X) = (T_1, \text{Student's t})$ with $\Sigma=I$ to illustrate the robustness of our approach in the heavy-tailed setting.  In this case, there are many outlying singular values due to the fatness of the distribution tail. We can also view them as spiked singular values, although they are not created by $\Sigma$. Detecting the signal from all these outliers seems impossible.   
In contrast, we observe that the eigenvalue approach remains robust in this case, as long as the 2nd moments of the matrix entries exist.  

%
%
%
%
%
%
%
%
%

\nc

\section{First order of the heteroscedastic case: Proof of Theorem \ref{thm: first order}} \label{pf of first order}
Recall the matrix model $\mathcal{Y}$ from (\ref{model}). Denote the eigendecomposition of $\mathcal{S}$ by 
\begin{align}\label{062704}
\mathcal{S}=W\mathfrak{D}W^*:=\sum_{i=\pm 1, \ldots, \pm k} d_i w_i w_i^*, 
\end{align}
where 
\begin{align*}
d_{\pm i}=\pm {d_i},  \quad w_{\pm i}=\frac{1}{\sqrt{2}}\binom{u_i}{\pm v_i}, \qquad i=1, \ldots, k.
\end{align*}
By considering the characteristic polynomial, if $\lambda$ is an eigenvalue of $\mathcal{Y}$ but not an eigenvalue of $\mathcal{X}$, we can derive from 
\begin{align*}
\det\Big(\mathcal{X}+W\mathfrak{D}W^*-\lambda\Big)=\det\big(\mathcal{X}-\lambda\big)\det\Big(I+\mathfrak{D}W^*(\mathcal{X}-\lambda)^{-1}W\Big)=0
\end{align*} 
the following identity
\begin{align}
\det\Big(I+\mathfrak{D}W^*(\mathcal{X}-\lambda)^{-1}W\Big)=0. \label{102901}
\end{align} 
We further write
\begin{align}
(\mathcal{X}-\lambda)^{-1}=\left(
\begin{array}{ccc}
\lambda G_\sigma(\lambda^2)&\Sigma X_1\mathcal{G}_\sigma(\lambda^2)\\
 \mathcal{G}_\sigma(\lambda^2) X_2^*\Sigma^* & \lambda \mathcal{G}_\sigma(\lambda^2)
\end{array}
\right), \label{021803}
\end{align}
where 
\begin{align*}
G_\sigma(z):= (\Sigma X_1X_2^*\Sigma^*-z)^{-1}, \qquad \mathcal{G}_\sigma(z):=( X_2^*\Sigma^*\Sigma X_1-z)^{-1}.
\end{align*}
We further set 
 \begin{align}
 G(z):=\left( X_{1} X_{2}^{*} -z\right)^{-1}, \qquad  \mathcal{G}(z):=\left(X_{2}^{*}  X_{1}-z\right)^{-1}. \label{070701}
 \end{align} 
 We denote by $\rho(A)$ the spectral radius of a square matrix $A$.  Recall the notion ``with high probability" from Definition \ref{stochastic dom}.  We have the following proposition regarding the spectral radius and Green functions of $X_{1} X_{2}^{*}$ and $\Sigma X_1X_2^*\Sigma^*$.

\begin{pro}\label{pro.spectralradius} Under Assumption \ref{main assump}, for any small constant $\delta>0$, we have 
\begin{align*}
\rho(X_{1} X_{2}^{*})\leq n^{-1}\|T\|_{\text{op}}+\delta,  \qquad \rho(\Sigma X_1X_2^*\Sigma^*)\leq n^{-1}\|T\|_{\text{op}}+\delta
\end{align*}
with high probability.  Further, uniformly in $z$ with $|z|\geq n^{-1}\|T\|_{\mathrm{op}}+\delta$, we have for some large constant $C>0$, 
\begin{align*}
\| G(z)\|_{\text{op}}, \|G_{\sigma}(z)\|_{\text{op}}, \|\mathcal{G}(z)\|_{\text{op}}, \|\mathcal{G}_{\sigma}(z)\|_{\text{op}}\leq C
\end{align*}
with high probability. 
\end{pro}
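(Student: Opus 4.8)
\textbf{Proof plan for Proposition \ref{pro.spectralradius}.}

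The plan is to prove the statements in two stages, following the proof strategy outlined above: first for the undeformed block model $\mathcal{X}_0$ (equivalently $X_1 X_2^*$), then transfer the conclusion to $\mathcal{X}$ (equivalently $\Sigma X_1 X_2^* \Sigma^*$) by a perturbative argument. For the first stage, I would identify $\mathcal{X}_0$ with a Kronecker-type random matrix with independent entries and variance profile $\mathcal{V}$ as in (\ref{variance profile}), so that the local law / pseudospectrum machinery of \cite{AEKN18, AEK18} applies. The key deterministic object is the Hermitized Dyson equation for $\mathbf{H}^z$; because the block structure and the variance profile $\mathcal{V}$ depend on $z$ only through $|z|^2$, this system reduces to four coupled vector equations whose solution I would analyze to show that $\mathrm{dist}(0,\mathrm{supp}\,\rho^z) > 0$ — i.e. $z \notin \mathbb{D}_\tau$ for some $\tau = \tau(\delta) > 0$ — whenever $|z|^2 \geq n^{-1}\|T\|_{\mathrm{op}} + \delta$. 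The self-consistent spectrum being bounded away from $0$ at such $z$ gives, via the pseudospectral inclusion of \cite{AEKN18}, that with high probability $\mathcal{X}_0 - z$ is invertible with $\|(\mathcal{X}_0 - z)^{-1}\|_{\mathrm{op}} \leq C$; this simultaneously yields $\rho(\mathcal{X}_0) \leq \sqrt{n^{-1}\|T\|_{\mathrm{op}} + \delta}$, hence $\rho(X_1 X_2^*) \leq n^{-1}\|T\|_{\mathrm{op}} + \delta$, and the bounds on $\|G(z)\|_{\mathrm{op}}, \|\mathcal{G}(z)\|_{\mathrm{op}}$ after reading off the Schur-complement blocks of the resolvent as in (\ref{021803}) (with $\Sigma = I$).

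For the second stage I would write $\mathcal{X} - z$ as a low-rank (multiplicative) perturbation of $\mathcal{X}_0$ adjusted by $z$, and show $\det(\mathcal{X} - z) \neq 0$ for all $|z| \geq \sqrt{n^{-1}\|T\|_{\mathrm{op}} + \delta}$ with high probability. Concretely, since $\Sigma = I + \Xi\Delta\Theta^*$ is a rank-$r$ perturbation of the identity, one can factor out $\mathcal{X}_0 - z$ and reduce invertibility of $\mathcal{X} - z$ to the invertibility of a fixed-size ($O(r) \times O(r)$) matrix built from entries of the type $\psi^* \underline{G}(\lambda) \phi$ and $\psi^* X_a \phi$, for deterministic unit vectors $\psi,\phi$ drawn from the $\xi_i,\theta_i$ and their images under $X_a$. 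Using the large-deviation bound $\psi^* X_a \phi \prec n^{-1/2}$ together with the crucial quadratic-form estimate (\ref{022710}) — $|\psi^* \underline{G}(\lambda)\phi| \prec n^{-1/2+\varepsilon}$, to be proved separately in Section \ref{s.proof of quadratic form bound} via the cumulant expansion — and the constraint $\|\Delta\|_{\mathrm{op}} \leq n^{1/4-\varepsilon_0}$, each such entry is $\prec n^{-\varepsilon_0}$ after multiplication by the relevant powers of $\sigma_i$; hence the fixed-size matrix is a small perturbation of a nonsingular one, proving $\det(\mathcal{X} - z) \neq 0$ uniformly. The same reduction, via a resolvent (Woodbury) identity for $(\mathcal{X} - z)^{-1}$ in terms of $(\mathcal{X}_0 - z)^{-1}$ and the now-invertible fixed-size matrix, upgrades $\|G_\sigma(z)\|_{\mathrm{op}}, \|\mathcal{G}_\sigma(z)\|_{\mathrm{op}} \leq C$; and $\rho(\Sigma X_1 X_2^* \Sigma^*) = \rho(\mathcal{X}^2)$ on the nonzero part gives the second spectral-radius bound.

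I expect the main obstacle to be the first stage: verifying that the self-consistent spectrum of $\mathbf{H}^z$ is bounded away from $0$ precisely down to the threshold $|z|^2 = n^{-1}\|T\|_{\mathrm{op}}$, rather than some non-sharp constant multiple of it. This requires a careful quantitative analysis of the four-component vector Dyson equation — in particular, showing that the smallest singular value of $\mathbf{H}^z$'s self-consistent operator degenerates exactly when $|z|^2$ reaches $n^{-1}\|T\|_{\mathrm{op}}$, which is where the structure (the dependence on $|z|^2$ only, and the role of $\|T\|_{\mathrm{op}} = \|TT^*\|_{\mathrm{op}}^{1/2}$ as the relevant operator norm) must be exploited. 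The perturbative second stage is comparatively routine once (\ref{022710}) is granted, though one must be careful that the bound is uniform over the continuum of $z$ (handled by a standard net argument together with the Lipschitz continuity of all quantities in $z$ on the relevant region, using $\|G\|_{\mathrm{op}} \leq C$ to control the derivative).
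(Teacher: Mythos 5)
Your plan follows the same two-stage structure as the paper's proof: stage one reduces the null model $\mathcal{X}_0$ to the Hermitized Dyson equation, analyzes the resulting four-component vector system to locate the self-consistent spectrum away from zero for $|z|^2 \ge n^{-1}\|T\|_{\mathrm{op}}+\delta$ (the paper's Lemma \ref{lem_uv_est}), and then invokes the pseudospectral inclusion and local-law machinery of \cite{AEKN18} to get invertibility and resolvent bounds (Proposition \ref{prop_upper_bound}); stage two factors $\det(\mathcal{X}-z)$ into $\det(\mathcal{X}_0-z)$ times a fixed-size determinant controlled by Proposition \ref{quadraticformbound} and the restriction $\|\Delta\|_{\mathrm{op}}\le n^{1/4-\varepsilon_0}$, followed by a Neumann/$\epsilon$-net argument for uniformity (Proposition \ref{prop_upper_bound_spike}). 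The only cosmetic difference is that the paper Schur-complements first, so the reduced determinant involves only $L^*G(z^2)L$-type quadratic forms and no $\psi^*X_a\phi$ terms; you also correctly flag the quantitative Dyson-equation analysis as the genuine technical bottleneck, which is exactly what the paper devotes Lemma \ref{lem_uv_est} to.
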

The proof of the above proposition will be stated in Appendix \ref{s.spectralradius}.

We then introduce the centered Green functions 
\begin{align}
\underline{G}(z):=G(z)+z^{-1}, \qquad \underline{\mathcal{G}}(z):=\mathcal{G}(z)+z^{-1}. \label{def of G under}
\end{align}
We set for any small but fixed $\varepsilon>0$.
\begin{align}
q_n\equiv q_n(\varepsilon)=:n^{\frac12-\varepsilon}.  \label{def of q}
\end{align}

Here $\varepsilon$ shall always be chosen to be sufficiently small according to $\varepsilon_0$ in Assumption \ref{main assump} of (iii). 

The following proposition provides the estimates of the quadratic forms of centered $G$ and $\mathcal{G}$,  which will be one of our main technical inputs. 

\begin{pro} \label{quadraticformbound} Under Assumption \ref{main assump}, 
for any $|z|\geq n^{-1}\|T\|_{\mathrm{op}}+\delta$, and any deterministic vectors $u, v\in S_{\mathbb{C}}^{p-1}$ and $\psi, \phi\in S_{\mathbb{C}}^{n-1}$, we have 
\begin{align}
&\langle u, \underline{G}(z) v \rangle=O_{\prec} (q_n^{-1}), \qquad  \langle \psi, \underline{\mathcal{G}}(z) \phi \rangle=O_{\prec} (q_n^{-1})\notag\\
&\langle u, X_1\mathcal{G}(z)\phi\rangle=O_\prec(q_n^{-1}), \qquad \langle \psi, \mathcal{G}(z) X_2^* v\rangle=O_\prec(q_n^{-1}).  \label{011510}
\end{align}
\end{pro}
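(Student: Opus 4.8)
\textbf{Proof proposal for Proposition \ref{quadraticformbound}.}

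The plan is to prove all four bounds in \eqref{011510} via the cumulant (a.k.a.\ integration-by-parts) expansion applied directly to the relevant Green function quadratic forms, rather than through a Neumann series and combinatorial moment counting. I will treat the first estimate $\langle u, \underline{G}(z) v\rangle = O_\prec(q_n^{-1})$ as the model case; the second is symmetric in the roles of $X_1, X_2$ (pass to $\mathcal{G}$), and the last two are handled by the same machinery after noting that $X_1\mathcal{G}(z) = X_1(X_2^*X_1 - z)^{-1}$ and using the resolvent identity $X_1\mathcal{G}(z) = G(z) X_1$ (and similarly $\mathcal{G}(z) X_2^* = X_2^* G(z)$) to relate them to products of $G$, $\mathcal{G}$ with one extra factor of $X_a$. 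The starting point is the self-consistent control already available from Proposition \ref{pro.spectralradius}: for $|z| \geq n^{-1}\|T\|_{\mathrm{op}} + \delta$ the operator norms $\|G(z)\|_{\mathrm{op}}, \|\mathcal{G}(z)\|_{\mathrm{op}}$ are $O(1)$ with high probability, so all the resolvent factors appearing below are bounded, and the deterministic approximation of $G(z)$ is simply $-z^{-1}I$ (this is what the subtraction in $\underline{G} = G + z^{-1}$ removes), because the limiting object for the product $X_1X_2^*$ has no nontrivial self-consistent equation away from the spectrum — the spectral radius bound says $0$ is outside the support, so the deterministic resolvent is the trivial $-z^{-1}$.

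The core computation: write $f := \langle u, \underline{G}(z) v\rangle = \langle u, G(z) v\rangle + z^{-1}\langle u, v\rangle$. To bound a high moment $\mathbb{E}|f|^{2m}$, I will use the identity $z\, G(z) = X_1X_2^* G(z) - I$, so that $z\langle u, \underline{G} v\rangle = \langle u, X_1 X_2^* G v\rangle$, and then expand the expectation $\mathbb{E}\big[ z\langle u, \underline G v\rangle \cdot f^{m-1}\bar f^{\,m}\big]$ by applying the cumulant expansion to the $X_1$ entries that appear explicitly in the leading factor $\langle u, X_1 X_2^* G v\rangle = \sum_{i,j} u_i (X_1)_{ij} (X_2^* G v)_j$. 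The first-order (Gaussian/second-cumulant) term produces, upon differentiating $(X_2^* G v)_j$ and the $f, \bar f$ factors with respect to $(X_1)_{ij}$, terms in which each derivative hitting a resolvent inserts a factor carrying a bounded resolvent and an off-diagonal or averaged structure; the ``self-energy'' term (derivative landing back on the same resolvent entry in $X_2^* G v$) reconstructs a term proportional to $\mathbb{E}[\langle u, \underline G v\rangle f^{m-1}\bar f^m]$ with a coefficient involving $\frac{1}{n}\sum_j t_{ij}(\cdots)$ divided by $z$, which after using $|z| \gtrsim n^{-1}\|T\|_{\mathrm{op}}$ can be absorbed or shown to be a contraction; the genuinely new terms each come with an explicit gain of $n^{-1/2}$ from the variance normalization $\mathbb{E}(X_1)_{ij}^2 = t_{ij}/n$ combined with an $\ell^2$-to-$\ell^\infty$ type bound on sums like $\sum_i u_i (\cdots)_i$. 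Higher-order cumulant terms (third cumulant and beyond) carry additional powers of $n^{-1/2}$ from the moment assumption $\max_{ij}\mathbb{E}|x_{ij}|^p \leq C_p$ and are subleading. Iterating, one closes a self-improving bound of the schematic form $\mathbb{E}|f|^{2m} \lesssim n^{-1/2 + \epsilon} \cdot (\text{lower powers of } f) + \ldots$, and a standard iteration/bootstrap (starting from the a priori bound $|f| \leq \|G\|_{\mathrm{op}} + |z|^{-1} = O(1)$ from Proposition \ref{pro.spectralradius}) yields $\mathbb{E}|f|^{2m} \leq n^{-m(1-2\varepsilon) + o(1)}$ for every $m$, hence $f \prec q_n^{-1}$ by Markov's inequality and the definition of stochastic domination. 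Uniformity in $z$ on the relevant region and in the unit vectors $u,v,\psi,\phi$ follows since all constants depend only on $\delta$, $t_*, t^*$, $K$, $C$, and the moment bounds, plus a net argument in $z$ combined with the Lipschitz continuity of $G(z)$ (controlled again by $\|G\|_{\mathrm{op}}$).

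The main obstacle I anticipate is \emph{bookkeeping the iteration so that the bound genuinely closes with the exponent $q_n^{-1} = n^{-1/2+\varepsilon}$ rather than a weaker power}, because in the off-diagonal/averaged-vector setting the naive power counting in the cumulant expansion is delicate: one must carefully distinguish ``diagonal'' resolvent entries (order $1$), ``off-diagonal'' entries (order $n^{-1/2}$), and weighted sums against the deterministic vectors $u, v, \psi, \phi$, and verify that every term produced by a derivative either carries an explicit $n^{-1/2}$ or can be fed back into the recursion with a controllably small coefficient. A secondary technical point is that $\Sigma$ does not appear in Proposition \ref{quadraticformbound} (the quadratic forms are for the \emph{undeformed} $X_1X_2^*$), so this proposition is really the clean ``engine'' that will later be combined with the perturbative analysis of $\Sigma$; here the only inputs needed about the model are the operator-norm and spectral-radius bounds of Proposition \ref{pro.spectralradius} and the variance-profile flatness \eqref{flatness_assumption}, which guarantee that the self-energy coefficients $\frac1n\sum_j t_{ij}(\cdots)$ stay comparable to $1$ and thus the stability of the recursion is uniform.
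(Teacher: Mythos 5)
Your proposal follows essentially the same route as the paper's proof: rewrite $z\langle u,\underline G(z)v\rangle=\langle u,X_1X_2^*G(z)v\rangle$, apply the cumulant expansion directly to the explicit $X_1$ entries, and close a recursive high-moment estimate for $\mathbb{E}|q_n\,u^*\underline G v|^{2k}$ via Young's inequality, with the operator-norm bound from Proposition~\ref{pro.spectralradius} as the a priori input and an $\epsilon$-net for uniformity in $z$; the other three bounds are handled analogously (indeed via $X_1\mathcal G=GX_1$ and $\mathcal GX_2^*=X_2^*G$, as you say). Two small remarks. First, to apply the cumulant expansion formula rigorously one needs \emph{deterministic} bounds on derivatives of the integrand, not just a high-probability bound on $\|G\|_{\mathrm{op}}$; the paper therefore inserts a smooth cutoff $\chi_G=\chi(n^{-1}\mathrm{Tr}\, GG^*)$ into the quantity being expanded so that the remainder term and all derivative terms admit deterministic $O(n)$ bounds on the low-probability bad event — your write-up should include this (or an equivalent truncation). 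Second, the ``self-energy / contraction'' step you anticipate does not actually arise here: since $|z|\geq n^{-1}\|T\|_{\mathrm{op}}+\delta$ is outside the self-consistent spectrum, the deterministic approximation of $G(z)$ is simply $-z^{-1}I$ and there is no nontrivial Dyson-type equation to stabilize; in the paper's computation the first-order (second-cumulant) term already carries an explicit prefactor $q_n/n=n^{-1/2-\varepsilon}$ and is directly negligible, so no absorption or contraction argument is needed — this simplifies the bookkeeping you were worried about.
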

The proof of Proposition \ref{quadraticformbound} will be deferred to Appendix \ref{s.proof of quadratic form bound}. We then proceed with the proof of Theorem \ref{thm: first order}.

Notice that $\Sigma^{-1}$ is also a low rank perturbation of $I$. 
We then first  write
\begin{align*}
    G_{\sigma}(z)
     =(\Sigma^*)^{-1} \left[ X_{1} X_{2}^{*} -z+z\left(I- (\Sigma^*\Sigma)^{-1} \right)\right]^{-1} \Sigma^{-1}
   =:   G(z) + \mathcal{E}(z).
\end{align*}
The low rank matrix $I-(\Sigma^*\Sigma)^{-1}$  admits a spectral decomposition 
\begin{align}
I-(\Sigma^*\Sigma)^{-1}=: L\Gamma L^*, \label{032401}
\end{align}  
where $\Gamma$ is a low rank diagonal matrix consisting of the non-zero eigenvalues of $I-(\Sigma^*\Sigma)^{-1}$. Then, by  Woodbury matrix identity, we can write 
\begin{align*}
&\left[ X_{1} X_{2}^{*} -z+z\left(I- (\Sigma^*\Sigma)^{-1} \right)\right]^{-1}=\left[ X_{1} X_{2}^{*} -z+zL\Gamma L^*\right]^{-1}\notag\\
& \qquad =G-zGL\big[I+z\Gamma L^* GL\big]^{-1}\Gamma L^*G=G-zGL\big[I-\Gamma+z\Gamma L^* \underline{G}L\big]^{-1}\Gamma L^*G ,
\end{align*}
which gives 
\begin{align*}
    \mathcal{E}(z) &= \left((\Sigma^*)^{-1}-I\right)G(z) + G(z)\left(\Sigma^{-1}-I\right) + \left((\Sigma^*)^{-1}-I\right) G(z)\left(\Sigma^{-1}-I\right) \notag \\
    &\qquad -z(\Sigma^*)^{-1}GL\left[I - \Gamma + z\Gamma L^*\underline{G}L\right]^{-1}\Gamma L^*G\Sigma^{-1}.
\end{align*}
We remark here that if we replace $G(z)$ by $-1/z$ in the definition of ${\mathcal{E}}(z)$, it gives $0$. Similarly, we can also write 
\begin{align*}
    \mathcal{G}_{\sigma}(z) &=
     \mathcal{G}(z) - \mathcal{G}(z)X_2^*L\Lambda (I + zL^*\underline{G}L\Lambda)^{-1}L^*X_1\mathcal{G}(z)  =: \mathcal{G}(z)+\mathcal{F}(z),
\end{align*}
where $\Lambda = (I-\Gamma)^{-1} - I$. Hence,  with the above notations, we can rewrite (\ref{021803}) as
\begin{align}
(\mathcal{X}-\lambda)^{-1}= \left(
\begin{array}{ccc}
\lambda G(\lambda^2) & \Sigma X_1\mathcal{G}(\lambda^2) \\
 \mathcal{G}(\lambda^2) X_2^*\Sigma^*   & \lambda \mathcal{G}(\lambda^2)
\end{array}
\right) 
+\left( \begin{array}{ccc}
\lambda \mathcal{E}(\lambda^2)& \Sigma X_1\mathcal{F}(\lambda^2) \\
\mathcal{F}(\lambda^2)X_2^*\Sigma^* & \lambda\mathcal{F}(\lambda^2). 
\end{array}\right). \label{021802}
\end{align}

Recall the definition of $\sigma_{\max}$ from (\ref{def of sigma_max}). 
Further, we introduce the notation $\mathsf{R}_i(z), i =1,2$ to include all the  matrices in the remainder terms after applying expansion, which satisfy that for any given unit vectors $u, v$
\begin{align}
    \langle u, \mathsf{R}_i(z) v \rangle = O_{\prec}(q_n^{-i}\sigma_{max}^{2i}). \label{021801}
\end{align}

By our assumption on $\Sigma$, it is easy to check $\sigma_{\max}^2 \leq Cn^{1/2-\varepsilon_0}$ for some $\varepsilon_0>\varepsilon>0$. Hence, the eigenvalues of $I-\Gamma$ are no smaller than $n^{-1/2+\varepsilon_0}$. Meanwhile, the entries of $ L^* \underline{G}L$ are $O_\prec(q_n^{-1})$ according to Proposition \ref{quadraticformbound}. Consequently,  $\|(I - \Gamma)^{-1}\Gamma L^*\underline{G}L\|_{\text{op}}= o(1)$ with high probability, we can thus express $\big[I-\Gamma+z\Gamma L^* \underline{G}L\big]^{-1}$ as a Neumann series and bound the remainder terms, i.e. 
\begin{align}\label{Neumann exp}
    &z(\Sigma^*)^{-1}GL\big[I-\Gamma+z\Gamma L^* \underline{G}L\big]^{-1}\Gamma L^*G\Sigma^{-1} 
    \notag \\ &\quad = z(\Sigma^*)^{-1}G(\Sigma^*\Sigma)\left(I - (\Sigma^*\Sigma)^{-1}\right)G\Sigma^{-1} \notag \\
    & \qquad -z^2(\Sigma^*)^{-1}G(\Sigma^*\Sigma)\left(I - (\Sigma^*\Sigma)^{-1}\right)\underline{G}(\Sigma^*\Sigma)\left(I - (\Sigma^*\Sigma)^{-1}\right)G\Sigma^{-1} + \mathsf{R}_2(z),
\end{align}
where $\mathsf{R}_2(z)$ is defined in (\ref{021801}). 

With the above expansion and Proposition \ref{quadraticformbound}, we can expand $G(\lambda^2)$ and $\mathcal{G}(\lambda^2)$ around $-1/\lambda^2$ in (\ref{021802}) and obtain
\begin{align}
\left(W^*(\mathcal{X}-\lambda)^{-1}W\right) &=-\frac{1}{\lambda}+W^* \left(
\begin{array}{ccc}
\lambda \underline{G}(\lambda^2) & \Sigma X_1\mathcal{G}(\lambda^2) \\
 \mathcal{G}(\lambda^2) X_2^*\Sigma^* & \lambda \underline{\mathcal{G}}(\lambda^2)
 \end{array}
 \right)W
  \notag\\
 &+ W^*\left(
\begin{array}{ccc}
\lambda \underline{\mathcal{E}}(\lambda^2) & 0 \\
0  & 0
\end{array}
\right) W+ W^*\mathsf{R}_2(\lambda^2)W, \label{102903}
\end{align}
where 
\begin{align}\label{def of E underline}
    \underline{\mathcal{E}}(z) = & A^*\underline{G} + \underline{G}A + A^*\underline{G}A + B^*\underline{G}\Sigma^{-1} + (\Sigma^*)^{-1}\underline{G}B + B^* \underline{G}B + \mathsf{R}(z),
\end{align}
\begin{align*}
    \mathsf{R}(z) 
    &= -z(\Sigma^*)^{-1} \underline{G}B\Sigma \underline{G}B\Sigma \underline{G}\Sigma^{-1} - z(\Sigma^*)^{-1}\underline{G}B\Sigma \underline{G}\Sigma^{-1} - z(\Sigma^*)^{-1}\underline{G}B\Sigma \underline{G}B \notag \\
    &\qquad - zB^*\underline{G}B\Sigma \underline{G}\Sigma^{-1} +\mathsf{R}_2(z),
\end{align*}
Here we introduced the shorthand notations
\begin{align*}
	A \deq \Sigma^{-1} - I, \qquad B \deq (\Sigma^* \Sigma) (I - (\Sigma^* \Sigma)^{-1}) \Sigma^{-1} = \Sigma^* - \Sigma^{-1}.
\end{align*}
Note that Proposition \ref{quadraticformbound} and (\ref{Neumann exp}) imply that the first six terms in  (\ref{def of E underline}) are $\mathsf{R}_1(z)$, while $\mathsf{R}(z)$ is an $\mathsf{R}_2(z)$. Then,  we have
\begin{align}
\left(W^*\Big[(\mathcal{X}-\lambda)^{-1} + \frac{1}{\lambda} + \frac{\mathcal{X}}{\lambda^2}\Big]W\right) &= W^* \left(
\begin{array}{ccc}
\lambda \underline{G}(\lambda^2) & \Sigma X_1 \underline{\mathcal{G}}(\lambda^2)) \\
\underline{\mathcal{G}}(\lambda^2) X_2^*\Sigma^* & \lambda \underline{\mathcal{G}}(\lambda^2)
 \end{array}
 \right)W
  \notag\\
 &+ W^*\left(
\begin{array}{ccc}
\lambda \underline{\mathcal{E}}(\lambda^2)& 0 \\
0  & 0
\end{array}
\right) W + W^*\mathsf{R}_2(\lambda^2)W, 
\end{align}
 With Proposition \ref{quadraticformbound}, one can follow the argument in \cite{Tao} to show that 
\begin{align*}
\lambda_{\pm i}(\mathcal{Y})=\pm \lambda_i(\mathcal{Y})=\pm d_{i}+O_{\prec}(q_n^{-1}\sigma_{\max}^2). 
\end{align*}
This concludes the proof  of Theorem \ref{thm: first order}.

\section{Second order of the heteroscedastic case: Proof of Theorem \ref{thm: second order}}\label{Section: second order}
In this section, we take a step further to study the fluctuation of $\lambda_i$. 
Based on (\ref{102901}) and Proposition \ref{pro.spectralradius}, we further expand $\lambda_i$ around $d_i$, 
\begin{align}\label{102902}
0 &=\det\Big(I+\mathfrak{D}W^*(\mathcal{X}-\lambda)^{-1}W\Big)\notag\\
&=\det\Big(I+\mathfrak{D}W^*(\mathcal{X}-d_i)^{-1}W+(d_i - \lambda_i)\mathfrak{D}W^*(\mathcal{X}-d_i)^{-2}W+O(|\lambda_i-d_i|^2)\Big)\notag\\
&= \det\Big(I+\mathfrak{D}W^*(\mathcal{X}-d_i)^{-1}W+(d_i - \lambda_i)d_i^{-2}\mathfrak{D}+O(|\lambda_i-d_i|q_n^{-1}\sigma_{\max}^2)\Big)\notag\\
&=\Big[1+d_iw_i^*(\mathcal{X}-d_i)^{-1}w_i+(d_i - \lambda_i)d_i^{-1}\Big]\prod_{j\neq i}\Big(1-\frac{d_j}{d_i}\Big)\notag\\
&\qquad+O(|\lambda_i-d_i|q_n^{-1}\sigma_{\max}^2)+O(q_n^{-2}\sigma_{\max}^2\|u_i^*\Sigma\|_2^2), 
\end{align}
where in the third step the estimate of $W^*(\mathcal{X}-d_i)^{-2}W$ follows simply from that of $W^*(\mathcal{X}-d_i)^{-1}W$ by applying a contour integral around $d_i$, and the last step follows from the expansion of the determinant and fact that the contribution from off-diagonal entries is small. For simplicity, we will write the error term in (\ref{102902}) as 
\begin{align*}
\mathcal{E}_i:=O(|\lambda_i-d_i|q_n^{-1}\sigma_{\max}^2)+O(q_n^{-2}\sigma_{\max}^2\|u_i^*\Sigma\|_2^2). 
\end{align*}
In the sequel, we will also use $\mathcal{E}_i$ to denote any generic term of the above order. 

From (\ref{102902}), we have 
\begin{align}
\lambda_i-{d}_i &= {d}_i\left(1+ {d}_iw_i^*(\mathcal{X}-{d}_i)^{-1}w_i \right)+\mathcal{E}_i\notag\\
&= {d}_i^2 w_i^*\Big[(\mathcal{X}-{d}_i)^{-1} + \frac{1}{{d}_i} + \frac{\mathcal{X}}{{d}_i^2}\Big]w_i - w_i^* \mathcal{X} w_i+\mathcal{E}_i. \label{010701}
\end{align}
By further using (\ref{102903}), we have 
\begin{align}
\lambda_i-{d}_i =& {d}_i^2 w_i^* \left(
\begin{array}{ccc}
{d}_i \underline{G}({d}_i^2) &  \Sigma X_1 \underline{\mathcal{G}}({d}_i^2) \\
\underline{\mathcal{G}}({d}_i^2) X_2^*\Sigma^* & {d}_i \underline{\mathcal{G}}({d}_i^2)
 \end{array}
 \right)w_i +{d}_i^2 w_i^*\left(
\begin{array}{ccc}
{d}_i \underline{\mathcal{E}}({d}_i^2)& 0 \\
0  & 0
\end{array}
\right)w_i - w_i^* \mathcal{X} w_i
+\mathcal{E}_i. \label{approx eigenvalue}
\end{align}
Hence, our main task is to establish the joint distribution of the following quadratic forms
$$ u^*\underline{G}(z)v, \quad \psi^*\underline{\mathcal{G}}(z)\phi, \quad s^* X_{1} \underline{\mathcal{G}}(z) \zeta, \quad r^* \underline{\mathcal{G}}(z) X_{2}^{*} \eta $$
and also the linear (in $\mathcal{X}$) term $w_i^* \mathcal{X} w_i$. 
We then have the following key proposition.
\begin{pro}\label{jointCLT} Let $|z|\geq n^{-1}\|T\|_{\mathrm{op}}+\delta$ for any small (but fixed) $\delta>0$.  Let $m_i, i=1,\dots, 4$ be fixed positive integers. For any collection of unit vectors $u_i, v_i, \psi_j, \phi_j, q_k, \gamma_k, \eta_\ell, r_\ell$ with $i\in \llbracket m_1\rrbracket, j\in \llbracket m_2\rrbracket, k\in \llbracket m_3\rrbracket, \ell \in \llbracket m_4\rrbracket$, the collection of random variables 
\begin{align}
&\Big\{\sqrt{n}u_i^* \underline{G}(z) v_i,  \sqrt{n}\psi_j^*\underline{\mathcal{G}}(z)\phi_j,  \sqrt{n}q_k^*  X_1\underline{\mathcal{G}}(z) \gamma_k,  \sqrt{n}\eta_\ell^* \underline{\mathcal{G}}(z)X_2^* r_\ell: \notag\\
&\hspace{10ex}i\in \llbracket m_1\rrbracket, j\in \llbracket m_2\rrbracket, k\in \llbracket m_3\rrbracket, \ell \in \llbracket m_4\rrbracket \Big\} \label{collection of qf}
\end{align}
converges to the collection of jointly Gaussian variables
\begin{align*}
\{\mathcal{A}_i, \mathcal{B}_j, \mathcal{C}_k, \mathcal{D}_\ell: i\in \llbracket m_1\rrbracket, j\in \llbracket m_2\rrbracket, k\in \llbracket m_3\rrbracket, \ell \in \llbracket m_4\rrbracket\}
\end{align*}
with mean 0 and  the covariance structure given by 
\begin{align}
&\mathrm{Cov}(\mathcal{A}_i, \mathcal{A}_j)=\frac{1}{n|z|^4}\sum_{\alpha,\beta} u_{i\alpha}{u_{j\alpha}}v_{i\beta}{v}_{j\beta}\vec{T}_{\alpha\cdot}^\top\Big[I-\frac{1}{n^2|z|^2} T^*T\Big]^{-1} \vec{T}_{\beta\cdot}, \notag\\
& \mathrm{Cov}(\mathcal{B}_i, \mathcal{B}_j)=\frac{1}{n|z|^4}\sum_{\alpha,\beta} \psi_{i\alpha}{\psi_{j\alpha}}\phi_{i\beta}{\phi}_{j\beta}\vec{T}_{\cdot \alpha}^\top\Big[I-\frac{1}{n^2|z|^2} TT^*\Big]^{-1} \vec{T}_{\cdot \beta}, \notag\\
&\mathrm{Cov}(\mathcal{C}_i, \mathcal{C}_j)=\frac{1}{n^2|z|^4} \sum_{\alpha,\beta} q_{i\alpha}q_{j\alpha}\gamma_{i\beta}\gamma_{j\beta}\Bigg(TT^* \Big[I-\frac{1}{n^2|z|^2} TT^*\Big]^{-1}\vec{T}_{\cdot \beta}\Bigg)_{\alpha} , \notag\\
& \mathrm{Cov}(\mathcal{D}_i, \mathcal{D}_j)=\frac{1}{n^2|z|^4} \sum_{\alpha,\beta} r_{i\alpha}r_{j\alpha}\eta_{i\beta}\eta_{j\beta}\Bigg(TT^* \Big[I-\frac{1}{n^2|z|^2} TT^*\Big]^{-1}\vec{T}_{\cdot \beta}\Bigg)_{\alpha}. \label{limitgaussian}
\end{align}
The collections $\{\mathcal{A}_i\}, \{\mathcal{B}_j\}, \{\mathcal{C}_k\}, \{\mathcal{D}_\ell\}$ are mutually independent.
Further, the collection (\ref{collection of qf}) is asymptotically independent of any collection of finitely many linear terms of the form $\sqrt{n}a^* X_i b, i=1,2$ for any deterministic unit vectors $a, b$. 
\end{pro}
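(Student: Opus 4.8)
\textbf{Proof proposal for Proposition \ref{jointCLT}.}

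The plan is to compute the joint characteristic function of the collection (\ref{collection of qf}) together with the linear terms $\sqrt{n}a^*X_ib$, and to show it converges to that of the claimed Gaussian vector. Fix real parameters and consider
\begin{align*}
\Psi_n := \mathbb{E}\exp\Big(\ii\sum_i s_i \sqrt{n}u_i^*\underline{G}v_i + \ii\sum_j t_j\sqrt{n}\psi_j^*\underline{\mathcal{G}}\phi_j + \ii\sum_k \tau_k\sqrt{n}q_k^*X_1\underline{\mathcal{G}}\gamma_k + \ii\sum_\ell\mu_\ell\sqrt{n}\eta_\ell^*\underline{\mathcal{G}}X_2^*r_\ell + \ii\sum_m\nu_m\sqrt{n}a_m^*X_{c_m}b_m\Big).
\end{align*}
The first step is to differentiate $\Psi_n$ in each parameter (or, equivalently, to run a cumulant expansion on a product $\Psi_n \cdot (\text{smooth observable})$): applying $\partial_{s_i}$ brings down a factor $\ii\sqrt n\, u_i^*\underline G v_i$, which we then resolve via the cumulant (generalized Stein) expansion in the entries of $X_1, X_2$, exactly the mechanism already used to prove Proposition \ref{quadraticformbound}. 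The key algebraic input is the self-consistent Dyson equation from Section \ref{s.spectralradius} (the Matrix/Vector equation (\ref{MDE2})), which gives the deterministic approximation of $\langle \underline G\rangle$-type quantities; the operator-norm bounds on $G,\mathcal G,G_\sigma,\mathcal G_\sigma$ from Proposition \ref{pro.spectralradius} ensure all error terms generated by the expansion are $O_\prec(q_n^{-1})$ smaller and hence negligible after the $\sqrt n$ scaling. Carrying this out, the second-order terms of the cumulant expansion (the Gaussian part) reproduce exactly the covariance kernels in (\ref{limitgaussian}): the factor $[I-\tfrac{1}{n^2|z|^2}T^*T]^{-1}$ arises from resumming the geometric series of $\underline G$-insertions, and the bilinear structures $u_{i\alpha}u_{j\alpha}v_{i\beta}v_{j\beta}$ come from pairing the two $X$-entries in a single covariance contraction at rows/columns $\alpha,\beta$. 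Third- and higher-order cumulants contribute terms suppressed by further powers of $n^{-1/2}$, using the moment assumption on $x_{ij}$, so the limiting log-characteristic function is purely quadratic, which is the assertion of joint Gaussianity.

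The block/sample structure then yields the stated independence. Since $\mathcal A_i$ involves only $\underline G=\underline G(X_1X_2^*-z)$ through its \emph{$(u,v)$} entries and $\mathcal B_j$ only through the \emph{$(\psi,\phi)$} entries, while $\mathcal C_k,\mathcal D_\ell$ mix one factor of $X_1$ (resp.\ $X_2^*$) with $\underline{\mathcal G}$, the cross-covariances in the cumulant expansion require contracting an $X_1$-entry from one functional with an $X_2$-entry from another; because $X_1$ and $X_2$ are independent such mixed contractions vanish identically, and the only surviving cross terms within, say, the $\mathcal A$-family are the ones displayed. The asymptotic independence from the linear terms $\sqrt n\,a^*X_cb$ is the cleanest part: any contraction pairing the single bare $X_c$-entry of the linear functional against an $X$-entry buried inside a resolvent $\underline G$ produces a factor $\underline G$ evaluated against a deterministic vector, which is $O_\prec(q_n^{-1})$ by Proposition \ref{quadraticformbound}, hence the joint characteristic function factorizes in the limit; for the $\mathcal C,\mathcal D$ terms one must additionally check that the bare $X$ sitting outside the resolvent there is contracted either with the linear term (giving a deterministic quadratic form that contributes to cross-covariance only if the sample indices match — and one verifies the geometry forces it to $0$ in the limit for distinct indices) or with a resolvent entry (again $O_\prec(q_n^{-1})$).

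For the actual bookkeeping I would not expand $\Psi_n$ directly but rather set up the standard interpolation/flow: define $\phi(s)=\mathbb E[e^{\ii s Z_n}\mathcal R_n]$ where $Z_n$ is the full linear combination above and $\mathcal R_n$ any fixed smooth bounded factor, show $\phi'(s) = -\,s\,\sigma^2\,\phi(s) + o(1)$ uniformly with $\sigma^2$ the target variance, and integrate; the $o(1)$ is controlled by Propositions \ref{pro.spectralradius}, \ref{quadraticformbound} plus the all-moment assumption. The main obstacle, and where most of the work lies, is the third step: correctly identifying which second-order contractions survive and resumming them into the closed-form kernels (\ref{limitgaussian}). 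Concretely, after one cumulant step each resolvent $\underline G$ spawns another $\underline G$ with an extra deterministic rank-one insertion, so one must track an infinite family of such nested quadratic forms and recognize the resulting series as $(I-\tfrac{1}{n^2|z|^2}T^*T)^{-1}$ acting on the appropriate row/column vectors of $T$ — this requires using the explicit solution of the Dyson equation (the rotational symmetry in $|z|^2$ noted in Section \ref{s.spectralradius}) to evaluate the deterministic matrix that the geometric series converges to, and care with the $X_1$-versus-$X_2$ bookkeeping so that $TT^*$ appears in the $\mathcal C,\mathcal D$ covariances and $T^*T$ in the $\mathcal A$ covariance (and $TT^*$ in the $\mathcal B$ covariance). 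The cross-term $N(TT^*,\alpha,\beta)$ appearing in Theorem \ref{thm: second order} but not in (\ref{limitgaussian}) is generated in the subsequent application of this proposition inside (\ref{approx eigenvalue}), when $\mathcal A$- and $\mathcal C$-type pieces combine; so here it suffices to establish (\ref{limitgaussian}) cleanly and the block-independence, leaving that recombination to Section \ref{Section: second order}.
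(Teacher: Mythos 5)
Your proposal follows essentially the same route as the paper: set up the characteristic function $f(t)=\exp(\ii t Q)$ for a linear combination $Q$ of the quadratic forms, derive an ODE of the form $\mathbb{E}f'(t)=-\mathfrak{d}^2 t\,\mathbb{E}f(t)+o(1)$ via cumulant expansion in the entries of $X_1,X_2$, use Propositions \ref{pro.spectralradius} and \ref{quadraticformbound} to control error terms, and recognize the geometric series of nested $\underline G$ insertions as the kernel $[I-\tfrac{1}{n^2|z|^2}T^*T]^{-1}$ (the paper does this concretely in Lemma \ref{leading I} via the self-consistent relation (\ref{M sc eqt})); the asymptotic independence from the linear terms $\sqrt n\,a^*X_cb$ is handled exactly as you describe, by enlarging the characteristic function and noting the extra cross-contractions are $O_\prec(q_n^{-1})$. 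The one technical device you omit but the paper makes explicit is the smooth cutoff $\chi_G=\chi(n^{-1}\mathrm{Tr}\,GG^*)$, which is inserted to give deterministic bounds on resolvent factors so that the cumulant expansion and remainder estimates can be performed legitimately; this is a small but necessary regularization, and otherwise your outline matches the paper's argument.
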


The proof of Proposition \ref{jointCLT} will be stated in Section \ref{section. proof of CLT}. Now, we proceed with the proof of Theorem  \ref{thm: second order}.  
 It amounts to the estimate of the variance of the Gaussian part in  (\ref{approx eigenvalue}), as we have already shown the asymptotic independence between the Gaussian part and $w_i^* \mathcal{X} w_i$ in Proposition \ref{jointCLT}. 
Notice that we have 
\begin{align}
    \lambda_i - d_i &= d_i^2(d_i u_i^*\underline{G}(d_i^2)u_i + u_i^*\Sigma X_1\underline{\mathcal{G}}(d_i^2)v_i + v_i^*\underline{\mathcal{G}}(d_i^2)X_2^*u_i + d_iv_i^*\underline{\mathcal{G}}(d_i^2)v_i + d_iu_i^*\underline{\mathcal{E}}(d_i^2)u_i) \notag \\
    & \qquad - u_i^*\Sigma X_1 v_i - v_i^* X_2^*\Sigma^* u_i + \mathcal{E}_i, \label{010703}
\end{align}
where $\underline{\mathcal{E}}$ is defined in (\ref{def of E underline}). 
A simplification leads to 
\begin{align}
    u_i^*\underline{\mathcal{E}}(d_i^2)u_i &= (\Sigma^{-1}u_i-u_i)^*\underline{G}(d_i^2)u_i + u_i^*\underline{G}(d_i^2)(\Sigma^{-1}u_i-u_i) + (\Sigma^{-1}u_i-u_i)^*\underline{G}(d_i^2)(\Sigma^{-1}u_i-u_i)  \notag \\ 
    & \qquad + (\Sigma^*u_i-\Sigma^{-1}u_i)^*\underline{G}(d_i^2)\Sigma^{-1}u_i + \Sigma^{-1}u_i^*\underline{G}(d_i^2)(\Sigma^*u_i-\Sigma^{-1}u_i) \notag \\
    & \qquad + (\Sigma^*u_i-\Sigma^{-1}u_i)^*\underline{G}(d_i^2)(\Sigma^*u_i-\Sigma^{-1}u_i) + \mathcal{E}_i \notag \\
    &= u_i^*\Sigma\underline{G}(d_i^2)\Sigma^*u_i -u_i^*\underline{G}(d_i^2)u_i + \mathcal{E}_i. \label{041501}
\end{align}
The Gaussian part of (\ref{010701}),  comes from the first line of (\ref{010703}). For notational simplicity, we set 
\begin{align*}
   & M(T, \alpha, \beta) = \vec{T}_{\alpha \cdot} \left[I-\frac{1}{n^{2}|z|^{2}}\left(T^*T\right)\right]^{-1}\vec{T}_{\beta \cdot} , \notag
    \\
  &  N(TT^*, \alpha, \beta) = \Bigg(TT^*\left[I-\frac{1}{n^{2}|z|^{2}}\left(TT^*\right)\right]^{-1}\vec{T}_{\cdot \beta}\Bigg)_{\alpha} , \notag \\
   & V(pq^*, rs^*)_{\alpha, \beta} = (pq^*)_{\alpha\alpha}(rs^*)_{\beta\beta}, 
\end{align*}
for some vector $p, q, r, s \in \mathbb{R}^p$. Combining (\ref{010703}), (\ref{041501}), and (\ref{limitgaussian}) in Proposition \ref{jointCLT}, we can conclude the proof of Theorem \ref{thm: second order}.

\section{Gaussianity of Green function quadratic forms: Proof of Proposition \ref{jointCLT}} \label{section. proof of CLT}
In this section, we prove Proposition \ref{jointCLT}, based on Propositions \ref{pro.spectralradius} and \ref{quadraticformbound}.  For brevity, we consider the following linear combination of the quadratic forms
\begin{equation}\label{Q def}
    \begin{aligned}
        Q &\deq \sqrt{n}\left(zu^*\underline{G}(z)v +  z\psi^*\underline{\mathcal{G}}(z)\phi +  q^*  X_{1} \underline{\mathcal{G}}(z) \gamma + \eta^* \underline{\mathcal{G}}(z) X_{2}^{*} r \right) \\
    &= \sqrt{n}\left( u^* X_1X_2^* G(z)v + \psi^* X_2^*X_1 \mathcal{G}(z)\phi +  q^*  X_{1} \underline{\mathcal{G}}(z) \gamma + \eta^* \underline{\mathcal{G}}(z) X_{2}^{*} r \right)
    \end{aligned}  
\end{equation}
In order to prove Proposition \ref{jointCLT}, we shall consider an arbitrary linear combination of all the quadratic forms in (\ref{collection of qf}). Nevertheless, the following derivation for the CLT of the above $Q$ is sufficient to show the mechanism. The generalization is straightforward. After establishing the CLT for $Q$, we will further comment on how to involve the linear term of $\mathcal{X}$ and show the asymptotic independence simultaneously. 

We further define the smooth cutoff function 
\begin{align}
\chi_G \equiv \chi(n^{-1}\text{Tr} GG^*), \label{011502}
\end{align}
where $\chi(t)$ is a smooth cutoff function which takes $1$ when $|t|\leq K$  for some sufficiently large constant $K>0$ and takes $0$ when $|t|\geq 2K$ and interpolates smoothly in between so that $|\chi^{(k)}(t)|\leq C_k$ for some positive constant $C_k$ for any fixed integer $k\geq 0$.  By the upper bound of $\|G\|_{\text{op}}$ in Propositions \ref{pro.spectralradius}, we have $|n^{-1}\text{Tr} GG^*|\leq K$ w.h.p.,  when $K$ is large enough. Hence, $\chi_G=1$ w.h.p. and $\chi^{(k)}_G\equiv \chi^{(k)}(n^{-1}\text{Tr} GG^*)=0 $ w.h.p. for any fixed $k\geq 1$. Further, we have the deterministic bound 
\begin{align}
\|G\|_{\text{op}}\chi^{(k)}_G\leq 2C_kKn, \qquad \text{for any fixed  } k\geq 0.  \label{101501}
\end{align}
Note that $Q=Q\chi_G$ w.h.p. Hence, it suffices to establish the CLT for $Q\chi_G$ instead.

Our aim is to establish
\begin{align}
\mathbb{E} f'(t)=-\mathfrak{d}^2t \mathbb{E} {f(t)}+o(1), \qquad \text{  where   } {f(t)}=\exp(\ii tQ\chi_G) \label{010322}
\end{align}
for some $\mathfrak{d}>0$.  We will use the cumulant expansion formula \cite[Lemma 1.3]{HK17} to establish the above equation.
The reason why we include the $\chi_G$ factor is to make sure that $G$-factors in the derivation has a deterministic upper bound so that we can take expectations. 

In the sequel, for simplicity, we omit $z$ from the notations of the Green functions. We further write
\begin{align}
 \mathbb{E} f'(t) & = \ii \mathbb{E} Q\chi_G{f(t)} 
= \ii \sqrt{n}\mathbb{E}  \left( u^* X_1X_2^* Gv +  \psi^* X_2^*X_1 \mathcal{G}\phi +  q^*  X_{1} \underline{\mathcal{G}} \gamma + \eta^* \underline{\mathcal{G}} X_{2}^{*} r \right)\chi_G{f(t)} \notag\\
&=: I+II+III+IV \label{010301}
\end{align}
In the sequel, we show the estimate of the first term $I$ in details, and the other terms can be estimated similarly. We write via cumulant expansion \cite[Lemma 1.3]{HK17}
\begin{align}
&I=\ii \sqrt{n}\mathbb{E}  u^* X_1X_2^* Gv f(t) =\ii \sqrt{n} \sum_{ij} \mathbb{E} x_{1,ij} \Big[(X_2^* G vu^*)_{ji} \chi_Gf(t)\Big]\notag\\
&\qquad\qquad= \ii \sqrt{n} \sum_{\alpha=1}^m \sum_{ij} \frac{\kappa_{\alpha+1}(x_{1,ij})}{\alpha !} \mathbb{E} \partial_{1,ij}^{\alpha} \Big[(X_2^*Gvu^*)_{ji} \chi_Gf(t)\Big]+\mathcal{R} \label{010723}
\end{align} 
where $\mathcal{R}=O_\prec(n^{-C})$ for a large constant $C>0$ when $m$ is large enough. The estimate can be done similarly to (\ref{remainderterm}).  

As the probability for the $\chi_G\neq 1$ or $\chi_G^{(k)}\neq 0$ for $k\geq 1$ is extremely small, any term involving the derivatives of $\chi_G$ can be neglected easily in the cumulant expansion. Hence, in the sequel, we will focus on the derivatives of other factors, and always include the terms involving the $\chi_G$ derivatives into the error terms without further explanation.

The first order ($\alpha=1$) term in (\ref{010723}) reads 
\begin{align}
&\frac{\ii}{\sqrt{n}}  \sum_{ij} \mathbb{E}\partial^1_{1,ij} \Big[ (X_2^* G v u^* )_{ji}{\chi_Gf(t)} \Big] \notag\\
&=\frac{\ii}{ \sqrt{n}} \sum_{ij} t_{ij} \mathbb{E} \Big[ \Big(-(X_2^*G)_{ji}(X_2^* Gvu^*)_{ji}+\ii t \sqrt{n} (X_2^* G v u^* )_{ji}\Big( -zu^*GE_{ij} X_2^* G v 
 \notag \\
&\qquad -z\psi^*\mathcal{G}X_2^* E_{ij}\mathcal{G}\phi+ q^* E_{ij} \underline{\mathcal{G}}\gamma  -q^* X_1\mathcal{G} X_2^* E_{ij} \mathcal{G}\gamma-\eta^* \mathcal{G} X_2^* E_{ij} \mathcal{G} X_2^* r\Big)\Big)\chi_G f(t)\Big] \notag\\
&=:-t\sum_{ij} t_{ij} \mathbb{E} \Big[ \Big((X_2^* G v u^* )_{ji}u^* E_{ij} X_2^* G v\Big)\chi_Gf(t) \Big]+\mathcal{R}_1.\notag\\ \label{010722}
\end{align}
Here $E_{ij}=(\delta_{ki}\delta_{\ell j})_{k,\ell}\in \mathbb{R}^{p\times n}$, i.e., $E_{ij}=e_i \tilde{e}_j^*$, where $e_i\in \mathbb{R}^p, \tilde{e}_j\in \mathbb{R}^n$ represent the standard basis of respective dimensions.  
We claim that in the RHS of the above equation, the first term is the leading term and $\mathcal{R}_1$ is the remainder term that is negligible. More precisely, with the aid of Proposition \ref{quadraticformbound} and Proposition \ref{prop_upper_bound}, we can easily show  
\begin{align}
	\mathcal{R}_1= O_\prec(q_n^{-1}). \label{010303}
\end{align}

We then proceed with showing that all higher order ($\alpha\geq 2$) terms in (\ref{010723}) are negligible. Notice that the second order term is proportional to  
\begin{align}\label{2nd order exp}
   &\sqrt{n} \sum_{ij} \kappa_3(x_{1,ij})\partial^2_{1,ij} \Big[ (X_2^* G v u^* )_{ji}\chi_G f(t) \Big] \notag\\
   &= \sqrt{n}\sum_{ij}  \kappa_3(x_{1,ij})\Big[\partial^2_{1,ij} (X_2^* G v u^* )_{ji} + 2\ii t\partial^1_{1,ij} (X_2^* G v u^* )_{ji} \partial^1_{1,ij}Q \notag \\ 
    &+ \ii t (X_2^* G v u^* )_{ji}\partial^2_{1,ij} Q - t^2 (X_2^* G v u^* )_{ji}(\partial^1_{1,ij} Q)^2\Big]\chi_G f(t) + \mathcal{R}_1 \notag \\
    & = (1) + (2) + (3) + (4) + \mathcal{R}_1,
\end{align}
Using \eqref{quadraticformbound} and the fact that $\kappa_3(x_{1,ij})=O(n^{-\frac32})$, one can simply bound (1) by
\begin{align*}
    |(1)| \leq Cn^{-1}\sum_{ij}  |(X_2^* G)_{ji}|^2  |\tilde{e}_j^* X_2^* G v||u_i| \overset{C.S.}{\leq} \sqrt{n} q_n^{-3},
\end{align*}
As the other terms in (\ref{2nd order exp}) all involve the derivative of $Q$, we first derive  
\begin{equation}\label{Q 1st der}
    \begin{aligned}
        \partial^1_{1,ij}Q &= \sqrt{n}\Big[u^* E_{ij} X_2^*  G v - u^* X_1X_2^* G E_{ij} X_2^*Gv \\
    &+  \psi^* X_2^*  E_{ij} \mathcal{G} \phi -  \psi^* X_2^*  X_1 \mathcal{G}X_2^*E_{ij} \mathcal{G} \phi \\
    &+ q^* E_{ij} \underline{\mathcal{G}}\gamma - q^* X_1\mathcal{G}X_2^* E_{ij} \mathcal{G}\gamma- \eta^* \mathcal{G}X_2^* E_{ij} X_2^* \mathcal{G}r \Big].
    \end{aligned}
\end{equation}
It is easy to obtain $|\partial^1_{1,ij}Q| = O(\sqrt{n} q_n^{-1})$ by Proposition \ref{quadraticformbound}.
Then for (2), by extracting  an factor $(X_2^* G)_{ji}$ from $\partial^1_{1,ij} \Big(X_2^* G v u^* \Big)_{ji}$, we have 
\begin{align*}
    |(2)| &\leq Cn^{-1}\sum_{ij} |(X_2^* G)_{ji}| \Big|\Big(X_2^* G v u^* \Big)_{ji} \partial^1_{1,ij} Q \Big|\notag\\
    &  \leq C n^{-1/2} q_n^{-1}\sum_{ij} |(X_2^* G)_{ji}| |(X_2^* G v u^* )_{ji}| \notag\\
    &\leq Cn^{-1/2} q_n^{-1} \sqrt{\text{Tr} X_2^*GG^*X_2\cdot v^*G^* X_2X_2^* G v}\prec q_n^{-1},
\end{align*}
where in the last two steps we used Cauchy Schwarz inequality and the fact that the operator norms of the matrices are $O_{\prec}(1)$. For term (3),we observe that in comparison to compare to $\partial^1_{1,ij} Q $, $\partial^2_{1,ij} Q$ will bring each term an additional factor,  which can be bounded crudely by O(1). Furthermore, in each term of $\partial_{1,ij}^1 Q$, there is a factor of the form $\theta^* e_i$ for some vector $\theta$ with $\|\theta\|_2\prec 1$, and another factor of the form $\tilde{e}_j^* \xi$ for some vector $\xi$ with $\|\xi\|_2\prec 1$. We further write $(X_2^* G vu^*)_{ji}=\tilde{e}_j^* X_2^*Gv\cdot u^*_i$, and apply Cauchy Schwarz inequality for the $i$-sum and $j$-sum respectively, one can easily get
\begin{align*}
    |(3)| \leq Cn^{-1} \sum_{ij} \Big|\Big(X_2^* G v u^* \Big)_{ji} \partial^1_{1,ij} Q \Big| \leq C n^{-1} \sum_{ij} \Big|u^*_i \theta^*e_i \tilde{e}_j^* X_2^*Gv  \tilde{e}_j^*\xi \Big| \prec n^{-1/2} . 
\end{align*}
For the term $(4)$, since $(\partial^1_{1,ij} Q)^2$ will produce an additional factor of $n$, we need to deal with this term in a finer way. Bounding one $\partial^1_{1,ij} Q$ by $\sqrt{n}q_n^{-1}$, and for the rest applying the same reasoning as in (3), we obtain
\begin{align*}
    |(4)| \leq Cn^{-1} \sum_{ij} \Big|\Big(X_2^* G v u^* \Big)_{ji} (\partial^1_{1,ij} Q)^2  \Big|\leq C q_n^{-1} \sum_{ij} \Big|u^*_i \theta^*e_i \tilde{e}_j^* X_2^*Gv  \tilde{e}_j^*\xi \partial^1_{1,ij} Q \Big|\lesssim q_n^{-1} . 
\end{align*}
Altogether, the second order term can be bounded by $q_n^{-1} = o(1)$. For higher order terms, $o(1)$ bound can be obtained similarly and actually more easily as $\kappa_{a+1}(x_{ij})$ decays by $n^{-1/2}$ if $\alpha$ increases by 1. Hence, we omit the details and claim 
\begin{align}
I=-t \sum_{ij} |u_i|^2 t_{ij}\mathbb{E}\Big[(X_2^* G vv^* G^*X_2)_{jj}\chi_G f(t) \Big]+O_\prec(q_n^{-1}). \label{010302}
\end{align}

Similarly, for the other terms in (\ref{010301}), we also have  
\begin{align}
&II=-t\sum_{ij} |\phi_j|^2 t_{ij} \mathbb{E}\Big[(X_2\mathcal{G}^*\psi\psi^* \mathcal{G}X_2^*)_{ii} \chi_Gf(t) \Big]+ O_{\prec}(q_n^{-1}),\notag\\
&III
    = -\frac{t}{|z|^2}\sum_{ijk} |q_i|^2 t_{ij}t_{kj} \mathbb{E}\Big[ (X_1\mathcal{G}\zeta\zeta^*\mathcal{G}^*X_1^*)_{kk} \chi_Gf(t) \Big] + O_{\prec}(q_n^{-1}), \notag\\
&IV
    = -\frac{t}{|z|^2}\sum_{ijk} |r_j|^2 t_{ji}t_{ki}\mathbb{E}\Big[ (X_2\mathcal{G}^*\eta\eta^*\mathcal{G}X_2^*)_{kk} \chi_Gf(t) \Big] + O_{\prec}(q_n^{-1}).   \label{010320}
\end{align}
In the following lemma, we provide a further estimate of the leading term in (\ref{010302}). The  terms in (\ref{010320}) can be estimated similarly. 
\begin{lem}\label{leading I} 
With the above notations, we have 
\begin{align*}
    &-t \sum_{ij} |u_i|^2 t_{ij}\mathbb{E}\Big[(X_2^* G vv^* G^*X_2)_{jj} \chi_Gf(t) \Big]\notag\\
    &\qquad= -\frac{t}{n|z|^2}\sum_{\alpha,\beta} |u_{\alpha}|^2|v_{\beta}|^2\vec{T}_{\alpha\cdot}^\top\Big[I-\frac{1}{n^2|z|^2} T^*T\Big]^{-1} \vec{T}_{\beta\cdot}{\mathbb{E}f(t)}+O_\prec(q_n^{-1}).
\end{align*}
\end{lem}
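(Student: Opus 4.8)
\emph{Setup.} Write $\vec\nu\in\mathbb{R}^{n}$ for the vector with entries $\nu_j:=\sum_i|u_i|^2t_{ij}$; using $X_2^*G=\mathcal{G}X_2^*$ one has $\sum_{ij}|u_i|^2t_{ij}(X_2^*Gvv^*G^*X_2)_{jj}=w^*\mathcal{G}^*\,\mathrm{diag}(\vec\nu)\,\mathcal{G}w$ with $w:=X_2^*v$, so the left-hand side of the lemma equals $-t\,\mathfrak{S}(\vec\nu)$, where for a nonnegative weight vector $\vec\rho$ we set
\begin{align*}
\mathfrak{S}(\vec\rho):=\mathbb{E}\big[\,w^*\mathcal{G}^*\,\mathrm{diag}(\vec\rho)\,\mathcal{G}w\;\chi_Gf(t)\,\big].
\end{align*}
The task is thus to find the deterministic equivalent of this \emph{two-resolvent} bilinear form (inside the expectation against the slowly varying factor $\chi_Gf(t)$) at $\vec\rho=\vec\nu$. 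The plan is to derive a geometric recursion for $\mathfrak{S}$ whose stability operator is $\frac{1}{n^2|z|^2}T^*T$, solve it, and recognize the resulting Neumann series as the claimed formula.

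\emph{The recursion.} First insert $\mathcal{G}=-z^{-1}+\underline{\mathcal{G}}$ and expand the product: the $(-z^{-1})(-z^{-1})$ term is the \emph{seed} $\frac{1}{|z|^2}\sum_j\rho_j|w_j|^2$, with $\mathbb{E}|w_j|^2=\mu_j:=\frac1n\sum_\alpha t_{\alpha j}|v_\alpha|^2$ and fluctuation $O_\prec(n^{-1}\|\vec\rho\|_2)$; the cross terms, via $\underline{\mathcal{G}}w=X_2^*\underline{G}v$, equal (up to a factor $z^{-1}$) $v^*\big(X_2\,\mathrm{diag}(\vec\rho)\,X_2^*\big)\underline{G}v$; and the last term, via $\underline{G}v=z^{-1}X_1\mathcal{G}X_2^*v$, equals $\frac{1}{|z|^2}w^*\mathcal{G}^*X_1^*\big(X_2\,\mathrm{diag}(\vec\rho)\,X_2^*\big)X_1\mathcal{G}w$ (both identities being elementary consequences of $X_2^*G=\mathcal{G}X_2^*$ and $X_1X_2^*G=I+zG$). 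Next, replace each sandwiched random matrix by its diagonal mean, $\mathbb{E}[X_2\,\mathrm{diag}(\vec\rho)\,X_2^*]=\frac1n\mathrm{diag}(T\vec\rho)$ and $\mathbb{E}[X_1^*\,\frac1n\mathrm{diag}(T\vec\rho)\,X_1]=\frac1{n^2}\mathrm{diag}(T^*T\vec\rho)$; the last term then reproduces $\frac{1}{|z|^2}\mathfrak{S}\big(\frac{1}{n^2}T^*T\vec\rho\big)$, and collecting the remainders gives
\begin{align*}
\mathfrak{S}(\vec\rho)=\frac{1}{|z|^2}\Big(\sum_j\rho_j\mu_j\Big)\mathbb{E}[f(t)]+\frac{1}{|z|^2}\,\mathfrak{S}\!\Big(\tfrac{1}{n^2}T^*T\vec\rho\Big)+\mathrm{err}(\vec\rho),\qquad \mathrm{err}(\vec\rho)=O_\prec\big(q_n^{-1}n^{-1/2}\|\vec\rho\|_2\big).
\end{align*}
The cross terms are within this bound because the relevant deterministic vector $\frac1n\mathrm{diag}(T\vec\rho)\,v$ has norm $\le t^*n^{-1/2}\|\vec\rho\|_2$, so Proposition~\ref{quadraticformbound} applies; the contributions of the centered parts $X_2\,\mathrm{diag}(\vec\rho)\,X_2^*-\mathbb{E}[\cdot]$ and $X_1^*\,\mathrm{diag}(\cdot)\,X_1-\mathbb{E}[\cdot]$ are controlled by one further cumulant expansion, exactly in the style of the estimates for $I,\dots,IV$ above: they are flanked by resolvents with $\|G\|_{\mathrm{op}},\|\mathcal{G}\|_{\mathrm{op}}=O_\prec(1)$ (Proposition~\ref{pro.spectralradius}), and since the derivatives of $\chi_Gf(t)$ are $O_\prec(n^{-1/2+\epsilon})$ the factor $\chi_Gf(t)$ effectively passes through as $\mathbb{E}[f(t)]$.

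\emph{Solving it.} Since $|z|\ge n^{-1}\|T\|_{\mathrm{op}}+\delta$ throughout this section, $\big\|\frac{1}{n^2|z|^2}T^*T\big\|_{\mathrm{op}}=\|T\|_{\mathrm{op}}^2/(n^2|z|^2)\le 1-\eta$ for some $\eta=\eta(\delta)>0$, so $I-\frac{1}{n^2|z|^2}T^*T$ is boundedly invertible. Iterating the recursion $M=C\log n$ times with weights $\vec\rho^{(m)}=(\frac1{n^2}T^*T)^m\vec\nu$, the tail $|z|^{-2M}\mathfrak{S}(\vec\rho^{(M)})$ is $O_\prec\big((1-\eta)^M\sqrt n\big)$, negligible for $C$ large; and since $\|\vec\nu\|_2=O(\sqrt n)$, the accumulated errors $\sum_{m\ge0}|z|^{-2m}\mathrm{err}(\vec\rho^{(m)})$ are $O_\prec\big(q_n^{-1}n^{-1/2}\sum_{m\ge0}(1-\eta)^m\|\vec\nu\|_2\big)=O_\prec(q_n^{-1})$. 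Summing the geometric series and substituting $\nu_j=\sum_\alpha t_{\alpha j}|u_\alpha|^2$, $\mu_k=\frac1n\sum_\beta t_{\beta k}|v_\beta|^2$ turns $\frac{\mathbb{E}[f(t)]}{|z|^2}\sum_j\nu_j\big([I-\frac{1}{n^2|z|^2}T^*T]^{-1}\vec\mu\big)_j$ into
\begin{align*}
\frac{\mathbb{E}[f(t)]}{n|z|^2}\sum_{\alpha,\beta}|u_\alpha|^2|v_\beta|^2\,\vec{T}_{\alpha\cdot}^\top\Big[I-\tfrac{1}{n^2|z|^2}T^*T\Big]^{-1}\vec{T}_{\beta\cdot},
\end{align*}
and multiplying by $-t$ gives the claimed identity (modulo $O_\prec(q_n^{-1})$).

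\emph{Main obstacle.} The hard part is the recursion step, specifically showing that the centered matrices $X_2\,\mathrm{diag}(\vec\rho)\,X_2^*-\mathbb{E}[\cdot]$ and $X_1^*\,\mathrm{diag}(\cdot)\,X_1-\mathbb{E}[\cdot]$ may be dropped: their operator norms are of order $\|\vec\rho\|_\infty$ — \emph{not} small — so a bare operator-norm estimate fails, and the gain must instead be extracted from the flanking resolvents via a cumulant expansion. Moreover one must ensure that the resulting error comes out proportional to $q_n^{-1}n^{-1/2}\|\vec\rho\|_2$ (rather than $q_n^{-1}\|\vec\rho\|_\infty$): this is precisely what makes the $O(\log n)$-fold accumulation along the recursion, where the $\ell^2$-norms of the successive weights $\vec\rho^{(m)}$ contract geometrically, stay below $q_n^{-1}$.
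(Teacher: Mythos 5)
Your rewrite of the left-hand side as $-t\,\mathfrak{S}(\vec\nu)$ with $w=X_2^*v$ is correct (using $X_2^*G=\mathcal{G}X_2^*$), and the three resolvent identities you invoke --- $\underline{\mathcal{G}}w=X_2^*\underline{G}v$, $\underline{G}=z^{-1}X_1\mathcal{G}X_2^*$ --- are exact. The algebraic decomposition $\mathcal{G}^*\mathrm{diag}(\vec\rho)\mathcal{G}=(\underline{\mathcal{G}}^*-\bar z^{-1})\mathrm{diag}(\vec\rho)(\underline{\mathcal{G}}-z^{-1})$ then produces a seed, two cross terms, and a term of the same shape as $\mathfrak{S}$ with $\vec\rho$ replaced by $\tfrac{1}{n^2}T^*T\vec\rho$ after mean-replacement. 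Evaluating your recursion at coordinate vectors $e_j$ reproduces exactly the paper's self-consistent equation \eqref{010315}--\eqref{M sc eqt} (with $\mu_j=\tfrac1n\sum_k t_{kj}|v_k|^2$ matching the inhomogeneous term), your contraction bound $\bigl\|\tfrac{1}{n^2|z|^2}T^*T\bigr\|_{\mathrm{op}}\le 1-\eta(\delta)$ is correct for $|z|\geq n^{-1}\|T\|_{\mathrm{op}}+\delta$, and the $O(\log n)$-iteration with the weight contraction $|z|^{-2m}\|\vec\rho^{(m)}\|_2\le(1-\eta)^m\|\vec\nu\|_2$ correctly keeps the accumulated error at $O_\prec(q_n^{-1})$ because $\|\vec\nu\|_2=O(\sqrt n)$. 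The final substitution back into the Neumann series agrees with the claimed formula. So the route you take is genuinely different in its organization: rather than deriving the self-consistent equation directly by two applications of the cumulant expansion as the paper does, you peel resolvents algebraically first, which makes the appearance of the stability operator $\tfrac{1}{n^2|z|^2}T^*T$ and the inhomogeneous term $\vec\mu$ transparent before any probabilistic estimate is invoked.

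However, the step you correctly flag as the "main obstacle" --- dropping the centered parts $X_2\mathrm{diag}(\vec\rho)X_2^*-\mathbb{E}[\cdot]$ and $X_1^*\mathrm{diag}(\cdot)X_1-\mathbb{E}[\cdot]$ with an error $O_\prec(q_n^{-1}n^{-1/2}\|\vec\rho\|_2)$ rather than $O_\prec(\|\vec\rho\|_\infty)$ --- is left as "one further cumulant expansion in the style of $I,\dots,IV$," and this is not a peripheral verification: it is the entire analytic content of the lemma, and it is precisely what the paper carries out in \eqref{010310} and \eqref{010311}. Your sketch of why it should work (bounded flanking resolvents from Proposition \ref{pro.spectralradius}, derivatives of $\chi_Gf(t)$ of size $O_\prec(\sqrt n q_n^{-1})$, and therefore $f(t)$ decoupling into $\mathbb{E}f(t)$) is plausible and consistent with the paper's mechanism, but you would in fact have to expand in an $X_2$-entry and then an $X_1$-entry, track the quartic/sextic derivative terms, and verify the $\ell^2(\vec\rho)$-dependence of the error in each cumulant order, which is roughly the same labor as the paper's two-step expansion. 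You should also be slightly careful that the cross-term quadratic form $v^*(X_2\mathrm{diag}(\vec\rho)X_2^*-\tfrac1n\mathrm{diag}(T\vec\rho))\underline{G}v$ is not independent of $\underline{G}$ (both involve $X_2$), so the error is not just a Hanson--Wright fluctuation; the needed cancellation against the $\underline{G}$-derivatives is exactly what the cumulant expansion supplies. In short: the proposal is structurally correct, the bookkeeping of how $\ell^2$-norms of $\vec\rho^{(m)}$ control the accumulated error is done carefully and is necessary for the iteration to close, but the proof is incomplete at the one place where the real work lies, and filling that gap would essentially reproduce the paper's computations.
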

\begin{proof}[Proof of Lemma \ref{leading I}] By cumulant expansion, we have 
\begin{align}
    &\mathbb{E} \Big[\left(X_{2}^{*} G v v^{*} G^* X_{2}\right)_{jj} \chi_Gf(t) \Big]= \sum_{k} \frac{t_{k j}}{n} \mathbb{E} \partial_{2, kj} \Big[\left(G v v^{*} G^* X_{2} \right)_{k j}\chi_G f(t)\Big]+O_\prec(\frac{1}{nq_n}) \notag\\ 
    &= \frac{1}{z} \sum_{k} \frac{t_{k j}}{n} \mathbb{E}\Big[\Big(\left(X_{1} X_{2}^{*} G v v^{*} G^*\right)_{k k}-\left(v v^{*} G^*\right)_{k k}\Big)\chi_Gf(t)\Big]+O_\prec(\frac{1}{nq_n}),  \label{010310}
\end{align}
where the error terms come from the estimates of the higher order terms in the expansion, and their estimate can be done similarly as before, and thus the details are omitted.  Further, by applying another cumulant expansion, we have 
\begin{align}
    &\mathbb{E}\Big[\left(X_{1} X_{2}^{*} G v v^{*} G^*\right)_{k k}\chi_G f(t)\Big]= \sum_{b} \frac{t_{kb}}{n} \mathbb{E} \partial_{1, kb} \Big[\left(X_{2}^{*} G v v^{*} G^*\right)_{bk} \chi_G f(t)\Big]  \notag\\ 
        =& -\sum_{b} \frac{t_{kb}}{n}\mathbb{E} \Big[\Big(\left(v^{*} G^* X_{2} e_{b} e_{b}^{*} X_{2}^{*} G e_k e_k^{*} G v \right) + \left(X_{2}^{*} G v v^{*} G^* X_{2}\right)_{bb} G^*_{k k} \Big) \chi_G f(t)\Big] +O_\prec(\frac{1}{nq_n}). \label{010311}
\end{align}

Note that when we plug (\ref{010311}) to (\ref{010310}), the contribution from the first term in (\ref{010311}) is negligible,  since 
\begin{align*}
   & \frac{1}{n^2}\sum_{k, b} t_{k j}t_{k b} v^{*} G^* X_{2} e_{b} e_{b}^{*} X_{2}^{*} G e_k e_k^{*} G v 
    = \frac{1}{n^2}\sum_{k} t_{kj} v^{*} G^* X_{2} \mathfrak{T}_{k \cdot} X_{2}^{*} G e_k e_k^{*} G v  \\ &\qquad\qquad= \sum_{k} t_{kj} v^{*} A_k e_k e_k^{*} G v  = \frac{1}{n^2}v^{*}B_jv =O_\prec(\frac{1}{n^2})
\end{align*}
where $\mathfrak{T}_{i \cdot} = diag(\{t_{ij}\}_{j=1}^n), \mathfrak{T}_{\cdot j} = diag(\{t_{ij}\}_{i=1}^p)$, and $A_i, B_i$ are some matrices that depends on $\mathfrak{T}_{i \cdot}$ or $ \mathfrak{T}_{\cdot i}$ with $O_\prec(1)$ operator norm bounds. Hence, approximating $G^*_{k k}$ in (\ref{010311}) and $\left(v v^{*} G^*\right)_{k k}$ in (\ref{010310}) by $-1/\bar{z}$ and $-|v_k|^2/\bar{z}$ respectively, we can derive from (\ref{010310}) that 
\begin{align*}
    &\mathbb{E} \Big[\left(X_{2}^{*} G v v^{*} G^* X_{2}\right)_{jj} \chi_Gf(t)\Big]\\
    =& \frac{1}{n^2 |z|^2} \sum_{k, b} t_{k j}t_{k b} \mathbb{E}\Big[(X_{2}^{*} G v v^{*} G^* X_{2})_{bb} \chi_Gf(t)\Big]+ \frac{1}{n |z|^2}\sum_{k} t_{kj} |v_k|^2\mathbb{E}[f(t)] + O_{\prec}(\frac{1}{nq_n}).
\end{align*}
A further estimate of the variances of $(X_{2}^{*} G v v^{*} G^* X_{2})_{jj}$'s via cumulant expansion leads to 
\begin{align}
  & \mathbb{E} \Big[\left(X_{2}^{*} G v v^{*} G^* X_{2}\right)_{jj}\chi_G f(t)\Big] =\mathbb{E} \Big[\left(X_{2}^{*} G v v^{*} G^* X_{2}\right)_{jj}\chi_G\Big]\mathbb{E} [f(t)]+ O_{\prec}(\frac{1}{nq_n}) \notag\\
 &\qquad   = \frac{1}{n^2 |z|^2} \sum_{k, b} t_{k j}t_{k b} \mathbb{E}\Big[(X_{2}^{*} G v v^{*} G^* X_{2})_{bb}\chi_G\Big]\mathbb{E} [f(t)]\notag\\
 &\qquad \qquad+ \frac{1}{n |z|^2}\sum_{k} t_{kj} |v_k|^2\mathbb{E}[f(t)] + O_{\prec}(\frac{1}{nq_n}).
 \label{010315}
\end{align}
It is also easy to see that the second equation still holds if we cancel out the $\mathbb{E}[f(t)]$ factors. In light of this fact, we 
denote by $\vec{T}_{k \cdot}$ the $k$-th row of $T$, and by $\vec{T}_{\cdot k}$ the $k$-th column of $T$. Further, we set   $\vec{\mathcal{M}}=\left(\mathcal{M}_{j}\right)_{j=1}^{n}$ with $\mathcal{M}_{j}=\mathbb{E}\left(n X_{2}^{*} G v v^{*} G^* X_{2}\right)_{j j}$. Then we have the self consistent equation 
\begin{align} \label{M sc eqt}
\vec{\mathcal{M}} 
&= \frac{1}{|z|^2} \sum_{k} |v_k|^2 \left[I-\frac{1}{n^{2}|z|^{2}}\left(T^*T\right)\right]^{-1} \vec{T}_{k \cdot} + \vec{\varepsilon},
\end{align}
where $\vec{\varepsilon}$ is an error vector with $\|\vec{\varepsilon}\|_{\infty}=O_{\prec}(q_n^{-1})$. Plugging (\ref{M sc eqt}) back to (\ref{010315}), we get 
\begin{align*}
&-t \sum_{ij} |u_i|^2 t_{ij}\mathbb{E}\Big[(X_2^* G vv^* G^*X_2)_{jj}\chi_G f(t) \Big] \notag\\
&= -\frac{t}{n|z|^2} \sum_{ik} |u_i|^2|v_k|^2 \vec{T}_{i \cdot}^{\top}\left[I-\frac{1}{n^{2}|z|^{2}}\left(T^*T\right)\right]^{-1}\vec{T}_{k \cdot} \mathbb{E}f(t)+ O_{\prec}(q_n^{-1}).
\end{align*}
This concludes the proof of Lemma \ref{leading I}.
\end{proof}

We then proceed with the proof of Proposition \ref{jointCLT} for the quadratic forms in (\ref{Q def}). Similarly to the proof of Lemma \ref{leading I}, we can also estimate other terms in (\ref{010320}).  In summary, we have 
\begin{align*}
    &I=-\frac{t}{n|z|^2} \sum_{ik} |u_i|^2|v_k|^2 \vec{T}_{i \cdot}^{\top}\left[I-\frac{1}{n^{2}|z|^{2}}\left(T^*T\right)\right]^{-1}\vec{T}_{k \cdot} \mathbb{E}f(t)+ O_{\prec}(q_n^{-1}), \notag\\
    & II=-\frac{t}{n|z|^2} \sum_{jk}|\phi_j|^2|\psi_k|^2\vec{T}_{\cdot j}^{\top}\left[I-\frac{1}{n^{2}|z|^{2}}\left(TT^*\right)\right]^{-1}\vec{T}_{\cdot k} \mathbb{E}f(t)+ O_{\prec}(q_n^{-1}), \notag\\
    & III=-\frac{t}{n^2|z|^4}\sum_{ik} |q_i|^2 |\zeta_k|^2 \Big(TT^*\Big[I - \frac{1}{n^2|z|^2}(TT^*)\Big]^{-1}\vec{T}_{\cdot k}\Big)_i \mathbb{E}f(t) + O_{\prec}(q_n^{-1}),\notag\\
    &IV=-\frac{t}{n^2|z|^4}\sum_{jk} |r_j|^2 |\eta_k|^2 \Big(TT^*\Big[I - \frac{1}{n^2|z|^2}(TT^*)\Big]^{-1}\vec{T}_{\cdot k}\Big)_j \mathbb{E}f(t) + O_{\prec}(q_n^{-1}). 
\end{align*}
Plugging the above estimates into (\ref{010301}), we get an estimate of the form in (\ref{010322}), and thus a CLT follows for the particular linear combination in (\ref{Q def}). 

In order to prove the general joint CLT for the quadratic forms in  (\ref{collection of qf}), we shall consider the characteristic function of more general linear combination of the form 
\begin{align*}
\sqrt{n}\left(z\sum_{i=1}^{m_1}c_{1i}u_i^*\underline{G}(z)v_i +  z\sum_{j=1}^{m_2} c_{2j}\psi_j^*\underline{\mathcal{G}}(z)\phi_j +  \sum_{k=1}^{m_3} c_{3k}q_k^*  X_{1} \underline{\mathcal{G}}(z) \gamma_k + \sum_{\ell=1}^{m_4} c_{4\ell}\eta_\ell^* \underline{\mathcal{G}}(z) X_{2}^{*} r_\ell \right). 
\end{align*}
The derivation is a straightforward extension of that is done for $Q$ in (\ref{Q def}). For brevity, we omit the details and claim  (\ref{limitgaussian}).

What remains is to show the asymptotic  independence of the Green function quadratic forms in (\ref{collection of qf}) and the linear (in $X_i$) term $a^*X_ib$. It  can be proved via a slight modification of the above derivation of the CLT.  We illustrate the necessary modification as follows, again based on the simple linear combination $Q$ defined in (\ref{Q def}). We now involve the term of the form $a^*X_1 b$, for instance, and define
\begin{align*}
f(t,s)=\exp(\ii t Q\chi_G+\sqrt{n}\ii s a^*X_1 b). 
\end{align*}
We can add the term of the form $c^* X_2 d$ as well. But for brevity, we restrict ourselves to the above quantity. Instead of (\ref{010322}), we now need to show 
\begin{align}
\frac{\partial}{\partial t}\mathbb{E} f(t, s)=-\mathfrak{d}^2t \mathbb{E} f(t,s)+o(1).  \label{010710}
\end{align}
Solving the above equation with $\mathbb{E}f(0,s)=\mathbb{E}\exp(\sqrt{n}\ii s a^* X_1 b )$ gives
\begin{align*}
\mathbb{E} f(t, s)=\exp(-\frac{\mathfrak{d}^2t^2}{2})\mathbb{E}\exp(\sqrt{n}\ii s a^* X_1 b )+o(1)
\end{align*}
which proves the asymptotic Gaussianity of $Q$, and the asymptotic independence between $Q$ and $\sqrt{n} a^* X_1b$ simultaneously. 
Hence, it suffices to illustrate how to adapt the proof of (\ref{010710}) from that of (\ref{010322}). Similarly to (\ref{010301}), we can write 
\begin{align*}
\frac{\partial}{\partial t}\mathbb{E}f(t,s)&=\ii \mathbb{E} Qf(t, s)\notag\\
&= \ii \sqrt{n}\mathbb{E}  \left( u^* X_1X_2^* Gv +  \psi^* X_2^*X_1 \mathcal{G}\phi +  q^*  X_{1} \underline{\mathcal{G}} \gamma + \eta^* \underline{\mathcal{G}} X_{2}^{*} r \right)\chi_G{f(t,s)} \notag\\
&\qquad=: \widetilde{I}+\widetilde{II}+\widetilde{III}+\widetilde{IV}. 
\end{align*}
It suffices to illustrate the estimate of $\widetilde{I}$, as an adapt of the estimate of $I$ in (\ref{010301}). The other terms can be estimated similarly.  We again apply cumulant expansion as (\ref{010723}). In the first order term of the cumulant expansion, we will have all terms analogous to those in (\ref{010722}), with an additional term which involves the derivative of the newly added $\sqrt{n}a^*X_1b$. This terms reads
\begin{align*}
\ii \sum_{ij} \mathbb{E} \Big[ (X_2^* G v u^* )_{ji} a_ib_j\chi_G{f(t,s)} \Big]=\ii \mathbb{E}[u^*a\cdot b^* X_2^* Gv\chi_G\cdot f(t,s)]=O_\prec(q_n^{-1}),
\end{align*} 
where we used Proposition \ref{quadraticformbound}. The derivative of the $\sqrt{n}a^*X_1b$ term will also show up  in the higher order terms in the cumulant expansion, but the related terms can all be estimated easily with the aid of Proposition \ref{quadraticformbound} and Cauchy Schwarz inequality. Hence, we omit the details and conclude (\ref{010710}).

\section{First order  of the heavy-tailed case: Proof of Theorem \ref{thm: first order opt}} \label{sec: 997}

Recall the matrix model $\mathcal{Y} = \mathcal{X} + \mathcal{S}$ in (\ref{model}), and the eigendecomposition for $\mathcal{S} = W\mathfrak{D}W^*$ from (\ref{062704}). We further remark here that in this section, we work with Assumption \ref{main assump 2}.  In this part, various limiting statements will be made. In these statements, we prefer to keep the non-asymptotic ratio $p/n$ instead of its limit, in the ``limiting objects". Hence, we make a convention here that in this section, a statement $A(n)\to B(n)$ means $A(n)-B(n)\to 0$, and a statement $X(n)\stackrel{ \text{law}}\rightarrow Y(n)$ means that for any continuous and bounded function (independent of $n$), one has $\mathbb{E} f(X(n))-\mathbb{E}f(Y(n))\to 0$. 

As mentioned earlier in \eqref{102901}, when $z$ is not an eigenvalue of $\mathcal{X}$, we have
\begin{equation}\label{eq: 989}
	\det(I+\mathfrak{D}W^{*}(\mathcal{X}-z)^{-1}W) = 0,
\end{equation}
if and only if $z$ is an eigenvalue of $\mathcal{Y}$.
To invoke this eigenvalue criterion, we shall first bound the spectral radius of $\mathcal{X}$, based on the characteristic polynomial approach introduced in \cite{BCG22}.
\begin{pro}\label{prop: 626} Suppose that Assumption \ref{main assump 2} holds. 
Let $\{\lambda_{i}(\mathcal{X})\}_{i=1}^{p+n}$ be the eigenvalues of $\mathcal{X}$.
Then, for any $\epsilon>0$, we have for large $n$
\begin{equation*}
	\mathbb{P}\Big\{ \max_{1\le i\le p+n} |\lambda_{i}(\mathcal{X})| > (p/n)^{1/4} + \epsilon \Big\} = o(1).
\end{equation*}
In other words, the spectral radius of $\mathcal{X}$ is bounded above by $(p/n)^{1/4}$ in probability.
\end{pro}
\noindent
The proof of Proposition \ref{prop: 626} is deferred to Section \ref{sec: 1110}.

~

Next, we define the function
\begin{equation*}
	f(z) = \det(I+\mathfrak{D}W^{*}(\mathcal{X}-z)^{-1}W).
\end{equation*}
By Proposition \ref{prop: 626}, with probability $1-o(1)$ as $n\to\infty$, all eigenvalues of $\mathcal{X}$ lie inside the disk
$\{z\in\mathbb{C}: |z|<\sqrt{(p/n)^{1/2} + \delta/3}\}.$
For a constant $M>0$, we define $\mathcal{X}^{(M)}=(\mathcal{X}^{(M)}_{ij})$ by
\begin{equation}\label{eq: 1029}
	\mathcal{X}^{(M)}_{ij} = \mathcal{X}_{ij} \mathds{1}(|\sqrt{n}\mathcal{X}_{ij}|\le M) - \E[\mathcal{X}_{ij} \mathds{1}(|\sqrt{n}\mathcal{X}_{ij}|\le M)].
\end{equation}
We then introduce
\begin{equation*}
	f^{(M)}(z) = \det(I+\mathcal{D}W^{*}(\mathcal{X}^{(M)}-\lambda)^{-1}W). 
\end{equation*}
Since all entries of $\mathcal{X}^{(M)}$ are bounded, one can similarly apply the argument used to show Theorem \ref{thm: first order} of the light-tail regime.
Consequently, for every point $d_{i}$ ($i=\pm 1,\cdots,\pm \tilde{k}$),
there exists a unique zero of \(f^{(M)}(z)\) outside the disk
$\{z\in\mathbb{C}: |z|<\sqrt{(p/n)^{1/2} + \delta/2}\}$ such that
this zero lies arbitrarily close to $d_{i}$ whenever $n$ and $M$ are sufficiently large.
Hence, invoking Rouch\'{e}'s theorem with the eigenvalue criterion \eqref{eq: 989},
it remains only to show that, with probability $1-o(1)$,
\begin{equation}\label{eq: 570}
	\sup_{|z|\ge \sqrt{(p/n)^{1/2} + \delta/2}} |f(z)-f^{(M)}(z)| = o(1),
\end{equation}
for sufficiently large $n$ and $M$.

~

We recall the determinant representations
\begin{equation}\label{eq: 1061}
	f(z) = \frac{\det(\mathcal{X}+\mathcal{S}-z)}{\det(\mathcal{X}-z)} \quad\text{and}\quad
	f^{(M)}(z) = \frac{\det(\mathcal{X}^{(M)}+\mathcal{S}-z)}{\det(\mathcal{X}^{(M)}-z)}.
\end{equation}
Hence it follows that
\begin{align*}
	f_{n}(z) - f^{(M)}_{n}(z)
	= &\frac{\det(\mathcal{X}+\mathcal{S}-z)\big(\det(\mathcal{X}^{(M)}-z)-\det(\mathcal{X}-z)\big)}{\det(\mathcal{X}-z)\det(\mathcal{X}^{(M)}-z)} \\
	&+ \frac{\det(\mathcal{X}+\mathcal{S}-z)-\det(\mathcal{X}^{(M)}+\mathcal{S}-z)}{\det(\mathcal{X}^{(M)}-z)}.
\end{align*}
Consequently, the estimate \eqref{eq: 570} follows from the next proposition, whose proof is deferred to Section \ref{sec: 1131}.

\begin{pro}\label{prop: 576}
Let $\epsilon>0$ be any (small) constant. \\

\noindent (i) The following holds for any constant $M>0$: there exists $c_{0}>0$ such that
	\begin{equation*}
		\mathbb{P}\Big\{ \min\Big(\inf_{|z|\ge \sqrt{(p/n)^{1/2} + \delta/2}}|z^{-n}\det(\mathcal{X}-z)|,\inf_{|z|\ge \sqrt{(p/n)^{1/2} + \delta/2}}|z^{-n}\det(\mathcal{X}^{(M)}-z)|\Big) < c_{0} \Big\} < \epsilon,
	\end{equation*}
	for $n$ large enough.

\noindent (ii) There exists $C_{0}>0$ such that for every $n\ge 1$,
	\begin{equation*}
		\mathbb{P}\Big\{ \sup_{|z|\ge \sqrt{(p/n)^{1/2} + \delta/2}}|z^{-n}\det(\mathcal{X}+\mathcal{S}-z)| > C_{0}  \Big\} < \epsilon.
	\end{equation*}

\noindent (iii) Let $\tau>0$ be any (small) constant.
For sufficiently large $M$, we have for every $n\ge 1$,
	\begin{equation*}
		\mathbb{P}\Big\{ \sup_{|z|\ge \sqrt{(p/n)^{1/2} + \delta/2}}|z^{-n}\det(\mathcal{X}-z)-z^{-n}\det(\mathcal{X}^{(M)}-z)| > \tau  \Big\} < \epsilon.
	\end{equation*}
	Similarly, if $M$ is large enough, then, for every $n\ge 1$,
	\begin{equation*}
		\mathbb{P}\Big\{ \sup_{|z|\ge \sqrt{(p/n)^{1/2} + \delta/2}}|z^{-n}\det(\mathcal{X}+\mathcal{S}-z)-z^{-n}\det(\mathcal{X}^{(M)}+\mathcal{S}-z)| > \tau  \Big\} < \epsilon.
	\end{equation*}
\end{pro}

\subsection{Proof of Proposition \ref{prop: 626}} \label{sec: 1110}

We follow the strategy of \cite{BCG22}.
First define
\begin{equation}\label{eq: 779}
	q_{n}(\omega) = \det(1-(p/n)^{-1/4}\omega\mathcal{X}) = 1 + \sum_{k=1}^{p+n}(-1)^{k}(p/n)^{-k/4}\omega^{k}\mathcal{P}_{k}^{(n)},
\end{equation}
where
\begin{equation}\label{eq: 1100}
	\mathcal{P}_{k}^{(n)} = \sum_{\substack{I\subseteq\{1,\cdots,p+n\}\\|I|=k}}\det(\mathcal{X}(I)), \quad
	\mathcal{X}(I) = (\mathcal{X}_{ij})_{i,j\in I}.
\end{equation}
Notice that $\det(z-\mathcal{X})=z^{n}q_{n}(z^{-1}(p/n)^{1/4})$.
For the unit disk $\mathbb{D}=\{z\in\mathbb{C}:|z|<1\}$,
denote by $H(\mathbb{D})$ the set of holomorphic function on $\mathbb{D}$ equipped with the topology of uniform convergence on compact subsets.
As a random element of $H(\mathbb{D})$, we say that $q_{n}$ converges in law to some
random holomorphic function $q$ of $H(\mathbb{D})$ if $\E[f(q_{n})]$ converges to $\E[f(q)]$ for every bounded real continuous function $f$ on $H(\mathbb{D})$.

\begin{pro}\label{prop: 622}
Let $q_{n}$ be as in \eqref{eq: 779}.
We have
\begin{equation}\label{eq: 837}
	q_{n} \overset{\textnormal{law}}{\longrightarrow} \kappa e^{-F} \quad \text{as $n\to\infty$,}
\end{equation}
where $\kappa$ is the holomorphic function on $\mathbb{D}$ defined by
\begin{equation}\label{eq: 639}
	\kappa(\omega) = \exp\bigg(-\frac{1}{2}\sum_{j=1}^{\infty} (p/n)^{-1}j^{-1}\omega^{4j}\bigg),
\end{equation}
and $F$ is the random holomorphic function on $\mathbb{D}$ defined by
\begin{equation*}
	F(\omega) = \sum_{k=1}^{\infty}Z_{k}k^{-1}\omega^{k},
\end{equation*}
where $\{Z_{k}\}$ is a sequence of independent random variables such that
$Z_{k}$ is a standard normal random variable for even $k$ and $Z_{k}=0$ for odd $k$.
\end{pro}

Recall the fact $\det(z-\mathcal{X})=z^{n}q_{n}(z^{-1}(p/n)^{1/4})$ and from the above proposition, we notice that $\kappa(z^{-1}(p/n)^{1/4})e^{-F((z^{-1}(p/n)^{1/4}))}\neq 0$ uniformly for all $|z|\geq \sqrt{(p/n)^{1/2} + \delta/2}$. Hence, to show Proposition \ref{prop: 626}, it suffices to establish Proposition \ref{prop: 622}, which asserts the convergence of $q_{n}$.
The remaining steps coincide exactly with those in \cite[Section 2]{BCG22}; accordingly, we omit them here.

~

We now begin the proof of Proposition \ref{prop: 622}.
The lemmas below will be used in the proof.

\begin{lem}\label{lem: 956}
The sequence $\{q_{n}\}$ is tight, i.e., for every $\epsilon>0$, there exists a compact subset $K$ of $H(\mathbb{D})$ such that $\mathbb{P}\{q_{n}\in K\}>1-\epsilon$ for every $n$.
\end{lem}
\begin{proof}

As in \cite[Lemma 3.1]{BCG22},
it is enough to bound $\E[|q_{n}(\omega)|^{2}]$ by a deterministic continuous function of $\omega$ not depending on $n$.
Due to independence and mean-zero condition, we notice that
\begin{equation*}
	\E[\mathcal{P}^{(n)}_{k}\mathcal{P}^{(n)}_{l}] = 0 \quad\text{if $k\neq l$}.
\end{equation*}
It is sufficient to show that
\begin{equation}\label{eq: 697}
	\E[|\mathcal{P}_{k}^{(n)}|^{2}] \le \Big(\frac{p}{n}\Big)^{k/2}.
\end{equation}
Define
\begin{equation*}
	\mathcal{T}
	=
	\begin{pmatrix}
		0 & \mathbbm{1}_{p\times n} \\
		\mathbbm{1}_{n\times p} & 0
	\end{pmatrix}\in\mathbb{R}^{(p+n)\times (p+n)},
\end{equation*}
where $\mathbbm{1}_{p\times n}$ is a $p\times n$ all-ones matrix.
Again, by independence and mean-zero condition,
\begin{equation}\label{eq: 2260}
	\E[|\mathcal{P}_{k}^{(n)}|^{2}] = n^{-k} \sum_{\substack{I\subseteq\{1,\cdots,p+n\}\\|I|=k}} D(\mathcal{T},I),
\end{equation}
where
\begin{equation*}
	D(\mathcal{T},I) = \sum_{\sigma\in\text{Perm}(I)} \prod_{i\in I}\mathcal{T}_{i\sigma(i)},
\end{equation*}
in which $\text{Perm}(I)$ is the set of all permutations on the subset $I\subseteq\{1,2,\cdots,p+n\}$.

For odd $k$, if $|I|=k$, we have $D(\mathcal{T},I)=0$.
We focus on the case that $k$ is even.
Note that $D(\mathcal{T},I)\neq 0$ if and only if $I\cap \{1,\cdots,p\}=k/2$.
We observe that
\begin{equation*}
	\sum_{\substack{I\subseteq\{1,\cdots,p+n\}\\|I|=k}} D(\mathcal{T},I) = {p\choose k/2}{n\choose k/2}(k/2)!(k/2)!\le (pn)^{k/2},
\end{equation*}
and
\begin{equation}\label{eq: 2277}
	n^{-k} \sum_{\substack{I\subseteq\{1,\cdots,p+n\}\\|I|=k}} D(\mathcal{T},I) \le \Big(\frac{p}{n}\Big)^{k/2}.
\end{equation}
Hence the desired estimate \eqref{eq: 697} follows.

\end{proof}

\begin{lem}[{\cite[Lemma 3.2]{BCG22}}]\label{lem: 972}
Let $\{f_{n}\}$ be a tight sequence of random elements of $H(\mathbb{D})$, each with power-series expansion $f_{n}(\omega)=\sum_{k=1}^{\infty}\omega^{k}P^{(n)}_{k}$ ($\omega\in\mathbb{D}$).
Assume that for every fixed $m\ge 0$,
\begin{equation*}
	(P_{0}^{(n)},\cdots,P_{m}^{(n)}) \overset{\textnormal{law}}{\longrightarrow} (P_{0},\cdots,P_{m}) \quad
	\text{as $n\to\infty$},
\end{equation*}
where \((P_{m})_{m\ge 0}\) is a common sequence of random variables (independent of \(n\)).
Then the random holomorphic function $f(\omega)=\sum_{k=0}^{\infty}(-1)^{k}\omega^{k}P_{k}$ is well-defined in $H(\mathbb{D})$ and
\begin{equation*}
	f_{n} \overset{\textnormal{law}}{\longrightarrow} f \quad
	\text{as $n\to\infty$}.
\end{equation*}
\end{lem}

\begin{lem}\label{lem: 778}
For $M>0$, let $\mathcal{X}^{(M)}$ be as in \eqref{eq: 1029}.
We define
\begin{equation*}
	\mathcal{P}_{k}^{(n,M)} = \sum_{\substack{I\subseteq\{1,\cdots,p+n\}\\|I|=k}}\det(\mathcal{X}^{(M)}(I)), \quad
	\mathcal{X}^{(M)}(I) = (\mathcal{X}^{(M)}_{ij})_{i,j\in I}.
\end{equation*}
Consider $m\ge 1$.
Assume that there exist $\{(\mathcal{P}_{1}^{(\infty,M)},\cdots,\mathcal{P}_{m}^{(\infty,M)})\}_{M\ge 1}$ and $(\mathcal{P}_{1},\cdots,\mathcal{P}_{m})$ such that
\begin{equation*}
	(\mathcal{P}_{1}^{(n,M)},\cdots,\mathcal{P}_{m}^{(n,M)}) \overset{\textnormal{law}}{\longrightarrow} (\mathcal{P}_{1}^{(\infty,M)},\cdots,\mathcal{P}_{m}^{(\infty,M)}) \quad
	\text{as $n\to\infty$,}
\end{equation*}
and
\begin{equation*}
	(\mathcal{P}_{1}^{(\infty,M)},\cdots,\mathcal{P}_{m}^{(\infty,M)}) \overset{\textnormal{law}}{\longrightarrow} (\mathcal{P}_{1},\cdots,\mathcal{P}_{m}) \quad
	\text{as $M\to\infty$.}
\end{equation*}
Let $\mathcal{P}_{k}^{(n)}$ be as in \eqref{eq: 1100}.
Then we have
\begin{equation*}
	(\mathcal{P}_{1}^{(n)},\cdots,\mathcal{P}_{m}^{(n)}) \overset{\textnormal{law}}{\longrightarrow} (\mathcal{P}_{1},\cdots,\mathcal{P}_{m}) \quad
	\text{as $n\to\infty$}.
\end{equation*}
\end{lem}
\begin{proof}

We claim that, for each $k$,
\begin{equation}\label{eq: 1244}
	\lim_{M\to\infty}\E[|\mathcal{P}_{k}^{(n)}-\mathcal{P}_{k}^{(n,M)}|^{2}] = 0.
\end{equation}
When $k$ is odd, we find that $\mathcal{P}_{k}^{(n)}=\mathcal{P}_{k}^{(n,M)}=0$.
For $k=2m$, we have
\begin{align}\label{eq: 1249}
	\E[|\mathcal{P}_{k}^{(n)}-\mathcal{P}_{k}^{(n,M)}|^{2}]
	&= \sum_{\substack{I\subseteq\{1,\cdots,p\}\\ J\subseteq\{p+1,\cdots,p+n\}\\|I|=|J|=m}} \E[|\det(\mathcal{X}(I\cup J))-\det(\mathcal{X}^{(M)}(I\cup J))|^{2}] \nonumber\\
	&= n^{-k}{p\choose m}{n\choose m}(m)!(m)! \times \E[|\prod_{j=1}^{k}a_{1j}-\prod_{j=1}^{k}a_{1j}^{(M)}|^{2}] \nonumber\\
	&\le \Big(\frac{p}{n}\Big)^{k/2} \times \E[|\prod_{j=1}^{k}a_{1j}-\prod_{j=1}^{k}a_{1j}^{(M)}|^{2}],
\end{align}
where $a_{ij}=\sqrt{n}x_{ij}$ and $a^{(M)}_{ij}=\sqrt{n}x^{(M)}_{ij}$.
The expectation on the right-hand side vanishes as \(M\to\infty\), proving
\eqref{eq: 1244}.  Moreover, the bound in \eqref{eq: 1249} is uniform in \(n\),
so the rate of convergence does not depend on \(n\).  This suffices to
establish the lemma, exactly as in \cite[Lemma 3.3]{BCG22}.

\end{proof}
	
Due to Lemma \ref{lem: 778},
we can further assume in the rest of the proof  that all entries of $\sqrt{n}\mathcal{X}$ are bounded.
Proceeding as in the discussion following \cite[Lemma 3.3]{BCG22}, we can write
\begin{equation}\label{eq: 622}
	q_{n}(\omega) = \exp\Big(-\sum_{k=1}^{\infty}\text{Tr}(\mathcal{X}^{k})k^{-1} (p/n)^{-k/4}\omega^{k}\Big).
\end{equation}
Moreover $(P_{1}^{(n)},\cdots,P_{k}^{(n)})$ is a polynomial in $(\text{Tr}(\mathcal{X}),\cdots,\text{Tr}(\mathcal{X}^{k}))$ that is independent of $n$.
Then, everything boils down to the convergence of traces:
\begin{equation*}
	(\text{Tr}(\mathcal{X}), \text{Tr}(\mathcal{X}^{2}), \cdots, \text{Tr}(\mathcal{X}^{k})), \quad k\ge 1.
\end{equation*}

~

\noindent
Differently from \cite{BCG22}, we do not prove the convergence of the above random vector by combinatorial argument. Instead, we can easily prove such a convergence again based on the cumulant expansion approach. To this end, we first write
\begin{equation*}
	\text{Tr}(\mathcal{X}^{k}) = -\frac{1}{2\pi\mathrm{i}}\int_{\Gamma}\omega^{k}\text{Tr}(\mathcal{X}-\omega)^{-1}\mathrm{d}\omega,
\end{equation*}
where a positively oriented Jordan curve $\Gamma\subseteq\mathbb{C}$ encloses the spectrum of $X$.
Note that
\begin{equation*}
	\text{Tr}(\mathcal{X}-\omega)^{-1} = \omega\text{Tr}\,G(\omega^{2}) + \omega\text{Tr}\,\mathcal{G}(\omega^{2}),
\end{equation*}
where we recalled the notations defined in (\ref{070701}). 
Then,
\begin{equation*}
	\text{Tr}(\mathcal{X}^{k}) = -\frac{1}{2\pi\mathrm{i}}\oint_{\Gamma}\omega^{k+1}(\text{Tr}\,G(\omega^{2})+\text{Tr}\,\mathcal{G}(\omega^{2}))\mathrm{d}\omega.
\end{equation*}
Further, using the centered Green functions defined in (\ref{def of G under}), one notices that for $k\geq 1$, 
\begin{equation*}
	\text{Tr}(\mathcal{X}^{k}) = -\frac{1}{2\pi\mathrm{i}}\oint_{\Gamma}\omega^{k+1}(\text{Tr}\,\underbar{G}(\omega^{2})+\text{Tr}\,\underline{\mathcal{G}}(\omega^{2}))\mathrm{d}\omega.
\end{equation*}

\noindent
Consider the centering of $\text{Tr}(\mathcal{X}^{k})$. Write
\begin{align*}
	L_{k} &= \text{Tr}(\mathcal{X}^{k}) - \E[\text{Tr}(\mathcal{X}^{k})] \\
	 &= -\frac{1}{2\pi\mathrm{i}} \Big(\oint_{\Gamma} \omega^{k+1} (\text{Tr}\,\underbar{G}(\omega^{2})-\E[\text{Tr}\,\underbar{G}(\omega^{2})]) \mathrm{d}\omega+\oint_{\Gamma} \omega^{k+1} (\text{Tr}\,\underline{\mathcal{G}}(\omega^{2})-\E[\text{Tr}\,\underline{\mathcal{G}}(\omega^{2})]) \mathrm{d}\omega\Big)\\
	 &=  -\frac{1}{\pi\mathrm{i}} \oint_{\Gamma} \omega^{k+1} (\text{Tr}\,\underbar{G}(\omega^{2})-\E[\text{Tr}\,\underbar{G}(\omega^{2})]) \mathrm{d}\omega.
\end{align*}

\begin{lem} \label{lem.fluctuation}
Suppose all entries of $\sqrt{n}\mathcal{X}$ are bounded.
Choose any given $k\ge 1$.
Then we have
\begin{equation}\label{eq: 981}
	(L_{1},\cdots,L_{k}) \overset{\textnormal{law}}{\longrightarrow}
	\big((p/n)^{1/4}Z_{1},\cdots,(p/n)^{k/4}Z_{k}\big) \quad \text{as $n\to\infty$,}
\end{equation}
where $\{Z_{k}\}$ is a sequence of independent random variables as in Proposition \ref{prop: 622}.
\end{lem}

Apart from the fluctuation of $\Tr \mathcal{X}^k$, we also need to consider the convergence of $\E[\textnormal{Tr}(\mathcal{X}^{k})]$.
See the following lemma.
\begin{lem} \label{lem.expectation}
Suppose all entries of $\sqrt{n}\mathcal{X}$ are bounded.
For each $k\ge 1$, we have
\begin{equation*}
	\E[\textnormal{Tr}(\mathcal{X}^{k})]\to w_k,
\end{equation*}
where
\begin{equation*}
	w_{k} 
	= \begin{cases}
		2\Big(\frac{p}{n}\Big)^{m+1}, &\text{if $k=4m$,} \\
		0, &\text{otherwise.}
	\end{cases}
\end{equation*}
\end{lem}
%
%
%

The detailed proofs of the above two lemmas are stated in Appendix \label{app.lemmas}. 
Combining the above two lemmas yields
\begin{equation}\label{eq: 839}
	(\text{Tr}(\mathcal{X}), \cdots, \text{Tr}(\mathcal{X}^{k})) \overset{\textnormal{law}}{\longrightarrow}
	\big((p/n)^{1/4}Z_{1}+w_{1}, \cdots, (p/n)^{k/4}Z_{k}+w_{k}\big).
\end{equation}
Due to \eqref{eq: 622}, together with Lemma \ref{lem: 956} and Lemma \ref{lem: 972}, one can obtain \eqref{eq: 837}, according to the argument in \cite{BCG22}.

\subsection{Proof of Proposition \ref{prop: 576}} \label{sec: 1131}

(i) we will make use of \eqref{eq: 837}.
Let $c>0$ to be chosen later.  By continuous mapping theorem, we have
\begin{multline*}
	\mathbb{P}\{ \inf_{|z|>\sqrt{(p/n)^{1/2}+\delta/2}}|z^{-n}\det(\mathcal{X}-z)| < c \} \\
	= \mathbb{P}\Big\{ \inf_{|z|>\sqrt{(p/n)^{1/2}+\delta/2}} |\kappa(z^{-1}(p/n)^{1/4})| \exp\Big(-\Re\Big(\sum_{k=1}^{\infty}Z_{k}k^{-1}(p/n)^{k/4}z^{-k}\Big)\Big) < c \Big\} + o(1).
\end{multline*}
Note that
\begin{multline*}
	\inf_{|z|>\sqrt{(p/n)^{1/2}+\delta/2}} |\kappa(z^{-1}(p/n)^{1/4})| \exp\Big(-\Re\Big(\sum_{k=1}^{\infty}Z_{k}k^{-1}(p/n)^{k/4}z^{-k}\Big)\Big) \\
	\ge \Big(\inf_{|z|>\sqrt{(p/n)^{1/2}+\delta/2}} |\kappa(z^{-1}(p/n)^{1/4})|\Big) \\
	\times \exp\Big(-\sup_{|z|>\sqrt{(p/n)^{1/2}+\delta/2}}\Re\Big(\sum_{k=1}^{\infty}Z_{k}k^{-1}(p/n)^{k/4}z^{-k}\Big)\Big).
\end{multline*}
In light of \eqref{eq: 639}, one can check that $$\inf_{|z|>\sqrt{(p/n)^{1/2}+\delta/2}} |\kappa(z^{-1}(p/n)^{1/4})|> c_{\delta}$$ for some constant $c_{\delta}$ depending on $\delta$.
Further, we have 
\begin{multline*}
	\mathbb{P}\Big\{ \sup_{|z|>\sqrt{(p/n)^{1/2}+\delta/2}} \Re\Big(\sum_{k=1}^{\infty}Z_{k}k^{-1}(p/n)^{k/4}z^{-k}\Big) > \log\Big(\frac{c_{\delta}}{c}\Big) \Big\} \\
	\le \frac{\E\big( \big|\sum_{k=1}^{\infty}|Z_{k}|k^{-1}(p/n)^{k/4}((p/n)^{1/2}+\delta/2)^{-k/2}\big|^{2} \big)}{\log^{2}(c_{\delta}/c)}.
\end{multline*}
One can easily check that $$\E\bigg[\Big|\sum_{k=1}^{\infty}|Z_{k}|k^{-1}(p/n)^{k/4}((p/n)^{1/2}+\delta/2)^{-k/2}\Big|^{2}\bigg]<\infty.$$
Then we can complete the proof by taking $c>0$ sufficiently small.
The same argument works for $z^{-n}\det(\mathcal{X}^{(M)}-zI_{n})$.

~

(ii) We consider
\begin{equation*}
	\sup_{|z|\ge\sqrt{(p/n)^{1/2}+\delta/2}}|z^{-n}\det(\mathcal{X}+\mathcal{S}-z)|.
\end{equation*}
For a positive integer $\mathfrak{p}$, let us write $[\mathfrak{p}]=\{1,\dots,\mathfrak{p}\}$.
We first expand the determinant of the sum of matrices using their minors as in \eqref{eq: 488}:
\begin{equation*}
	\det(1-z^{-1}\mathcal{X}-z^{-1}\mathcal{S}) = \sum_{\substack{I,J\subseteq[p+n]\\|I|=|J|}} (-1)^{s(I,J)}\det([-z^{-1}\mathcal{X}]_{I,J})\det([1-z^{-1}\mathcal{S}]_{I^c,J^c}).
\end{equation*}
Fix $I\subseteq[p+n]$ and write $m=|I|(=|J|)$.
Further perform the expansion
\begin{equation}
	\det([1-z^{-1}\mathcal{S}]_{I^c,J^c})
	= \sum_{\substack{\tilde{I}\subseteq I^{c}, \tilde{J}\subseteq J^{c}\\|\tilde{I}|=|\tilde{J}|}} (-1)^{s(\tilde{I},\tilde{J})}\det([1_{p+n}]_{\tilde{I},\tilde{J}})\det([-z^{-1}\mathcal{S}]_{I^{c}\backslash\tilde{I},J^{c}\backslash\tilde{J}}), \label{070902}
\end{equation}
where $1_{p+n}$ is the $(p+n)\times (p+n)$ identity matrix.
Since $\det([1_{p+n}]_{\tilde{I},\tilde{J}})=0$ whenever $\tilde{I}\neq\tilde{J}$,
it is enough to consider
\begin{equation*} %
	\sum_{\substack{\tilde{I}\subseteq I^{c},J^{c}}} \det([-z^{-1}\mathcal{S}]_{I^{c}\backslash\tilde{I},J^{c}\backslash\tilde{I}}).
\end{equation*}
Since $\text{rank}(\mathcal{S})=2k$, we have $\det([\mathcal{S}]_{I^{c}\backslash\tilde{I},J^{c}\backslash\tilde{I}})=0$ for all $\tilde{I}$ with $|\tilde{I}|<p+n-m-2k$.
Thus, the contributing terms are
\begin{equation}\label{eq: 1574}
	\sum_{\substack{\tilde{I}\subseteq I^{c},J^{c}\\|\tilde{I}|\ge p+n-m-2k}} \det([-z^{-1}\mathcal{S}]_{I^{c}\backslash\tilde{I},J^{c}\backslash\tilde{I}}).
\end{equation}
Because $\tilde{I}\subseteq I^{c}\cap J^{c}$, $p+n-m\ge|I^{c}\cap J^{c}|\ge|\tilde{I}|\ge p+n-m-2k$, and $|I|=|J|=m$, one can observe that the index set $J$ is obtained from $I$ by replacing at most $2k$ indices.
More precisely, there exist $\{i_{1},\dots,i_{\alpha}\}\subseteq I$ and $\{j_{1},\dots,j_{\alpha}\}\subseteq I^{c}$ with $0\le\alpha\le\min(m,2k)$ such that
\begin{equation*}
	J=(I\backslash\{i_{1},\dots,i_{\alpha}\})\cup\{j_{1},\dots,j_{\alpha}\}.
\end{equation*}
In addition, we denote by
\begin{equation*}
	(I^{c}\cap J^{c})\backslash\tilde{I}=\{\ell_{1},\dots,\ell_{\beta}\},
\end{equation*}
 with $\beta$ satisfying $0\le\alpha+\beta\le 2k$. 
Hence, for a fixed index set $I\subseteq[p+n]$,
\begin{multline*}
	\sum_{\substack{J\subseteq[p+n]\\|J|=m}} (-1)^{s(I,J)}\det([-z^{-1}\mathcal{X}]_{I,J})\det([1-z^{-1}\mathcal{S}]_{I^c,J^c}) \\
	= \sum_{\substack{0\le\alpha+\beta\le 2k\\ 0\le\alpha\le m}} (-1)^{\alpha} (-z)^{-(m+\alpha+\beta)} \sum_{\substack{\{i_{1},\cdots,i_{\alpha}\}\subseteq I\\ \{j_{1},\cdots,j_{\alpha},\ell_{1},\cdots,\ell_{\beta}\}\subseteq I^{c} }}  \det([\mathcal{X}]_{I,(I\backslash\{i_{1},\cdots,i_{\alpha}\})\cup\{j_{1},\cdots,j_{\alpha}\}}) \\
	\times \det([\mathcal{S}]_{\{j_{1},\cdots,j_{\alpha},\ell_{1},\cdots,\ell_{\beta}\},\{i_{1},\cdots,i_{\alpha},\ell_{1},\cdots,\ell_{\beta}\}}).
\end{multline*}
Then,
\begin{equation}\label{eq: 1588}
	\det(1-z^{-1}\mathcal{X}-z^{-1}\mathcal{S}) = \sum_{m=0}^{p+n}\sum_{\substack{0\le\alpha+\beta\le 2k\\ 0\le\alpha\le m}} (-1)^{\alpha} (-z)^{-(m+\alpha+\beta)} \mathcal{P}^{(n)}_{m,\alpha,\beta},
\end{equation}
where
\begin{multline}\label{eq: 1592}
	\mathcal{P}^{(n)}_{m,\alpha,\beta} = \sum_{\substack{I\subseteq[p+n]\\|I|=m}} \sum_{\substack{\{i_{1},\cdots,i_{\alpha}\}\subseteq I\\ \{j_{1},\cdots,j_{\alpha},\ell_{1},\cdots,\ell_{\beta}\}\subseteq I^{c} }}  \det([\mathcal{X}]_{I,(I\backslash\{i_{1},\cdots,i_{\alpha}\})\cup\{j_{1},\cdots,j_{\alpha}\}}) \\
	\times \det([\mathcal{S}]_{\{j_{1},\cdots,j_{\alpha},\ell_{1},\cdots,\ell_{\beta}\},\{i_{1},\cdots,i_{\alpha},\ell_{1},\cdots,\ell_{\beta}\}}).
\end{multline}
Then we simply bound the determinant as
\begin{equation*}
	|\det(1-z^{-1}\mathcal{X}-z^{-1}\mathcal{S})|
	\le
	\sum_{m=0}^{p+n}\sum_{\substack{0\le\alpha+\beta\le 2k\\ 0\le\alpha\le m}}
	|z|^{-(m+\alpha+\beta)}
	|\mathcal{P}^{(n)}_{m,\alpha,\beta}|
\end{equation*}
Hence, in order to bound the supremum (over $z$) of the determinant, it is enough to bound RHS for $|z|= \sqrt{(p/n)^{1/2}+\delta/2}$. We nevertheless proceed with the estimate for any $|z|\ge \sqrt{(p/n)^{1/2}+\delta/2}$ for notational brevity.

By Assumption \ref{main assump} (i), there is a (small) constant $\mathfrak{c}_{0}\in (0,1)$ independent of $n$ such that $\mathfrak{c}_{0}<p/n$ for all $n$.
For $|z|\ge \sqrt{(p/n)^{1/2}+\delta/2}$,
\begin{multline*}
	\E[|\det(1-z^{-1}\mathcal{X}-z^{-1}\mathcal{S})|^{2}] \\
	\le
	\sum_{\substack{0\le m\le p+n \\ 0\le m'\le p+n}}
	\sum_{\substack{0\le\alpha+\beta\le 2k \\ 0\le\alpha'+\beta'\le 2k}}
	|z|^{-(m+\alpha+\beta)}|z|^{-(m'+\alpha'+\beta')}(\E[|\mathcal{P}^{(n)}_{m,\alpha,\beta}|^{2}]\E[|\mathcal{P}^{(n)}_{m',\alpha',\beta'}|^{2}])^{1/2} \\
	\le
	\sum_{\substack{0\le m\le p+n \\ 0\le m'\le p+n}}
	(2k)^{4}((p/n)^{1/2}+\delta/2)^{-(m+m')/2}\mathfrak{c}_{0}^{-k}
	\max_{\substack{0\le\alpha+\beta\le 2k \\ 0\le\alpha'+\beta'\le 2k}}
	(\E[|\mathcal{P}^{(n)}_{m,\alpha,\beta}|^{2}]
	\E[|\mathcal{P}^{(n)}_{m',\alpha',\beta'}|^{2}])^{1/2}.
\end{multline*}
Thus it suffices to show the moment bound
\begin{equation}\label{eq: 1614}
	\max_{0\le\alpha+\beta\le 2k}\E[|\mathcal{P}^{(n)}_{m,\alpha,\beta}|^{2}] \le C_{k} \cdot (p/n)^{m/2},  
\end{equation}
for some constant $C_{k}$ depending only on $k$.

~

We observe that, for $I\neq I'$,
\begin{equation*}
	\E[ \det([\mathcal{X}]_{I,(I\backslash\{i_{1},\cdots,i_{\alpha}\})\cup\{j_{1},\cdots,j_{\alpha}\}}) \det([\mathcal{X}]_{I',(I'\backslash\{i'_{1},\cdots,i'_{\alpha}\})\cup\{j'_{1},\cdots,j'_{\alpha}\}}) ] = 0.
\end{equation*}
Moreover, for $\{i_{1},\cdots,i_{\alpha}\}, \{i'_{1},\cdots,i'_{\alpha}\}\subseteq I$ and $\{j_{1},\cdots,j_{\alpha}\}, \{j'_{1},\cdots,j'_{\alpha}\}\subseteq I^{c}$,
if $\{i_{1},\cdots,i_{\alpha}\}\neq\{i'_{1},\cdots,i'_{\alpha}\}$ or $\{j_{1},\cdots,j_{\alpha}\}\neq\{j'_{1},\cdots,j'_{\alpha}\}$, it follows that
\begin{equation*}
	\E[ \det([\mathcal{X}]_{I,(I\backslash\{i_{1},\cdots,i_{\alpha}\})\cup\{j_{1},\cdots,j_{\alpha}\}}) \det([\mathcal{X}]_{I,(I\backslash\{i'_{1},\cdots,i'_{\alpha}\})\cup\{j'_{1},\cdots,j'_{\alpha}\}}) ] = 0.
\end{equation*}
Then, from (\ref{eq: 1592}) we have
\begin{multline}\label{eq: 1615}
	\E[|\mathcal{P}^{(n)}_{m,\alpha,\beta}|^{2}]
	= \sum_{|I|=m} \sum_{\substack{\{i_{1},\cdots,i_{\alpha}\}\subseteq I\\ \{j_{1},\cdots,j_{\alpha}\}\subseteq I^{c} }}
	\sum_{\{\ell_{1},\cdots,\ell_{\beta}\},\{\ell'_{1},\cdots,\ell'_{\beta}\}\subseteq I^{c}\backslash\{j_{1},\cdots,j_{\alpha}\}} \E[|\det([\mathcal{X}]_{I,(I\backslash\{i_{1},\cdots,i_{\alpha}\})\cup\{j_{1},\cdots,j_{\alpha}\}})|^{2}]\\
	\times \det([\mathcal{S}]_{\{j_{1},\cdots,j_{\alpha},\ell_{1},\cdots,\ell_{\beta}\},\{i_{1},\cdots,i_{\alpha},\ell_{1},\cdots,\ell_{\beta}\}})
	\times \overline{\det([\mathcal{S}]_{\{j_{1},\cdots,j_{\alpha},\ell'_{1},\cdots,\ell'_{\beta}\},\{i_{1},\cdots,i_{\alpha},\ell'_{1},\cdots,\ell'_{\beta}\}})}.
\end{multline}
For each entry of $\mathcal{S}$, in light of (\ref{062704}), we can write
\begin{equation}
	\mathcal{S}(i,j) = \sum_{s=\pm 1, \ldots, \pm k} d_{s}w_{s}(i)w_{s}(j) . \label{071001}
\end{equation}
Let $[\mathcal{S}]_{(m_{1},\cdots,m_{\gamma}),(m'_{1},\cdots,m'_{\gamma})}$ be the matrix defined by
\begin{equation*}
	[\mathcal{S}]_{(m_{1},\cdots,m_{\gamma}),(m'_{1},\cdots,m'_{\gamma})}(i,j) = \mathcal{S}(m_{i},m'_{j}),
	\quad 1\le i,j\le \gamma.
\end{equation*}
Note that $[\mathcal{S}]_{(m_{1},\cdots,m_{\gamma}),(m'_{1},\cdots,m'_{\gamma})}$ can be obtained from $[\mathcal{S}]_{\{m_{1},\cdots,m_{\gamma}\},\{m'_{1},\cdots,m'_{\gamma}\}}$ by reordering rows and columns.
Moreover, we notice that
\begin{equation}\label{eq: 1632}
	|\det([\mathcal{S}]_{(m_{1},\cdots,m_{\gamma}),(m'_{1},\cdots,m'_{\gamma})})|
	= |\det([\mathcal{S}]_{\{m_{1},\cdots,m_{\gamma}\},\{m'_{1},\cdots,m'_{\gamma}\}})|.
\end{equation}
For each tuple $(i_{1},\cdots,i_{\alpha},\ell_{1},\cdots,\ell_{\beta})$ and $\sigma\in\text{Perm}([\alpha+\beta])$, we define the one-to-one correspondence $\tilde{\sigma}=\tilde{\sigma}_{(i_{1},\cdots,i_{\alpha},\ell_{1},\cdots,\ell_{\beta})}: \{1,\cdots,\alpha+\beta\}\to\{i_{1},\cdots,i_{\alpha},\ell_{1},\cdots,\ell_{\beta}\}$ by
\begin{equation*}
	\tilde{\sigma}(m) =
	\begin{cases}
		i_{\sigma(m)} & 1\le\sigma(m)\le\alpha, \\
		\ell_{\sigma(m)-\alpha} & \alpha+1\le\sigma(m)\le\alpha+\beta.
	\end{cases}
\end{equation*}
According to (\ref{071001}), it is not difficult to verify that the determinant 
$$\det(\mathcal{S}_{(j_{1},\cdots,j_{\alpha},\ell_{1},\cdots,\ell_{\beta}),(i_{1},\cdots,i_{\alpha},\ell_{1},\cdots,\ell_{\beta})})$$
consists of $(2k)^{\alpha+\beta}\times(\alpha+\beta)!$ terms of the form
\begin{equation}\label{eq: 2823}
	\prod_{\zeta=1}^{\alpha} d_{b_{\zeta}}w_{b_{\zeta}}(j_{\zeta})w_{b_{\zeta}}(\tilde{\sigma}(\zeta))
	\prod_{\xi=1}^{\beta} d_{b_{\alpha+\xi}}w_{b_{\alpha+\xi}}(\ell_{\xi})w_{b_{\alpha+\xi}}(\tilde{\sigma}(\alpha+\xi))
\end{equation}
where $b_{1},\cdots,b_{\alpha+\beta}\in[\pm k]=\{1,\cdots,k\}\cup\{-1,\cdots,-k\}$.
Recalling \eqref{eq: 1632} and using $\sigma^{-1}$ to align
$(\tilde{\sigma}(1),\cdots,\tilde{\sigma}(\alpha+\beta))$
with
$(i_{1},\cdots,i_{\alpha},\ell_{1},\cdots,\ell_{\beta}),$
we have
\begin{multline*}
	\sum_{\substack{\{i_{1},\cdots,i_{\alpha}\}\subseteq I\\ \{j_{1},\cdots,j_{\alpha}\}\subseteq I^{c} }}
	\sum_{\{\ell_{1},\cdots,\ell_{\beta}\},\{\ell'_{1},\cdots,\ell'_{\beta}\}\subseteq I^{c}\backslash\{j_{1},\cdots,j_{\alpha}\}} |\det([\mathcal{S}]_{\{j_{1},\cdots,j_{\alpha},\ell_{1},\cdots,\ell_{\beta}\},\{i_{1},\cdots,i_{\alpha},\ell_{1},\cdots,\ell_{\beta}\}})| \\
	\times |\det([\mathcal{S}]_{\{j_{1},\cdots,j_{\alpha},\ell'_{1},\cdots,\ell'_{\beta}\},\{i_{1},\cdots,i_{\alpha},\ell'_{1},\cdots,\ell'_{\beta}\}})|\\
	\le \sum_{\substack{b_{1},\cdots,b_{\alpha+\beta}\in[\pm k]\\\sigma\in\text{Perm}([\alpha+\beta])}} \sum_{\substack{b'_{1},\cdots,b'_{\alpha+\beta}\in[\pm k]\\\sigma'\in\text{Perm}([\alpha+\beta])}} \sum_{\substack{\{i_{1},\cdots,i_{\alpha}\}\subseteq I\\ \{j_{1},\cdots,j_{\alpha}\}\subseteq I^{c} }}
	\sum_{\{\ell_{1},\cdots,\ell_{\beta}\},\{\ell'_{1},\cdots,\ell'_{\beta}\}\subseteq I^{c}\backslash\{j_{1},\cdots,j_{\alpha}\}}
	\prod_{\zeta=1}^{\alpha+\beta}|d_{b_{\zeta}}d_{b'_{\zeta}}| \\
	\times
	\prod_{\zeta=1}^{\alpha} |w_{b_{\zeta}}(j_{\zeta})w_{b_{\sigma^{-1}(\zeta)}}(i_{\zeta})|
	\prod_{\xi=1}^{\beta} |w_{b_{\alpha+\xi}}(\ell_{\xi})w_{b_{\sigma^{-1}(\alpha+\xi)}}(\ell_{\xi})| \\
	\times
	\prod_{\zeta=1}^{\alpha} |w_{b'_{\zeta}}(j_{\zeta})w_{b'_{(\sigma')^{-1}(\zeta)}}(i_{\zeta})|
	\prod_{\xi=1}^{\beta} |w_{b'_{\alpha+\xi}}(\ell'_{\xi})w_{b'_{(\sigma')^{-1}(\alpha+\xi)}}(\ell'_{\xi})|.
\end{multline*}
Notice that
\begin{multline*}
	\prod_{\zeta=1}^{\alpha} |w_{b_{\zeta}}(j_{\zeta})w_{b_{\sigma^{-1}(\zeta)}}(i_{\zeta})|
	\prod_{\xi=1}^{\beta} |w_{b_{\alpha+\xi}}(\ell_{\xi})w_{b_{\sigma^{-1}(\alpha+\xi)}}(\ell_{\xi})| \\
	\times
	\prod_{\zeta=1}^{\alpha} |w_{b'_{\zeta}}(j_{\zeta})w_{b'_{(\sigma')^{-1}(\zeta)}}(i_{\zeta})|
	\prod_{\xi=1}^{\beta} |w_{b'_{\alpha+\xi}}(\ell'_{\xi})w_{b'_{(\sigma')^{-1}(\alpha+\xi)}}(\ell'_{\xi})| \\
	\le
	\prod_{\zeta=1}^{\alpha} |w_{b_{\zeta}}(j_{\zeta})w_{b_{\sigma^{-1}(\zeta)}}(i_{\zeta})|^{2}
	\prod_{\xi=1}^{\beta} |w_{b_{\alpha+\xi}}(\ell_{\xi})w_{b'_{\alpha+\xi}}(\ell'_{\xi})|^{2}
	\\
	+ \prod_{\zeta=1}^{\alpha} |w_{b'_{\zeta}}(j_{\zeta})w_{b'_{(\sigma')^{-1}(\zeta)}}(i_{\zeta})|^{2}
	\prod_{\xi=1}^{\beta} |w_{b_{\sigma^{-1}(\alpha+\xi)}}(\ell_{\xi})w_{b'_{(\sigma')^{-1}(\alpha+\xi)}}(\ell'_{\xi})|^{2}.
\end{multline*}
By summing over $i_{1},\cdots,i_{\alpha},j_{1},\cdots,j_{\alpha},\ell_{1},\cdots,\ell_{\beta},\ell'_{1},\cdots,\ell'_{\beta}$ (noting that, for a fixed $I$, we have $i_{\cdot}\in I$ and $j_{\cdot},\ell_{\cdot},\ell'_{\cdot}\in I^{c}$), we can see that (recalling that $|d_{i}|\le C$ for all $i$)
\begin{multline}\label{eq: 2890}
	\sum_{\substack{\{i_{1},\cdots,i_{\alpha}\}\subseteq I\\ \{j_{1},\cdots,j_{\alpha}\}\subseteq I^{c} }}
	\sum_{\{\ell_{1},\cdots,\ell_{\beta}\},\{\ell'_{1},\cdots,\ell'_{\beta}\}\subseteq I^{c}\backslash\{j_{1},\cdots,j_{\alpha}\}} |\det([\mathcal{S}]_{\{j_{1},\cdots,j_{\alpha},\ell_{1},\cdots,\ell_{\beta}\},\{i_{1},\cdots,i_{\alpha},\ell_{1},\cdots,\ell_{\beta}\}})| \\
	\times |\det([\mathcal{S}]_{\{j_{1},\cdots,j_{\alpha},\ell'_{1},\cdots,\ell'_{\beta}\},\{i_{1},\cdots,i_{\alpha},\ell'_{1},\cdots,\ell'_{\beta}\}})|
	\le 2C^{2(\alpha+\beta)}(2k)^{2(\alpha+\beta)}[(\alpha+\beta)!]^{2}.
\end{multline}

We claim that
\begin{equation}\label{eq: 2898}
	\det([\mathcal{X}]_{I,(I\backslash\{i_{1},\cdots,i_{\alpha}\})\cup\{j_{1},\cdots,j_{\alpha}\}})=0 \quad
	\text{if $||I\cap \{1,\cdots,p\}|-m/2| > \alpha/2$.}
\end{equation}
This vanishing property follows from the block structure
\begin{equation*}
	\mathcal{X}
	=
	\begin{pmatrix}
		0 & X_{1} \\
		X_{2} & 0
	\end{pmatrix},
\end{equation*}
which implies $\det([\mathcal{X}]_{I,J})=0$ if $|I\cap \{1,\cdots,p\}|\neq |J\cap \{p+1,\cdots,p+n\}|$.
Taking
$J=(I\backslash\{i_{1},\cdots,i_{\alpha}\})\cup\{j_{1},\cdots,j_{\alpha}\}$
and noting that $|I|=m$, one can find the stated condition
$\bigl||I\cap \{1,\cdots,p\}|-m/2\bigr|>\alpha/2$.
In addition, for any choice of indices $\{i_{1},\cdots,i_{\alpha}\}\subseteq I$ and $\{j_{1},\cdots,j_{\alpha}\}\subseteq I^{c}$, one has
\begin{equation}\label{eq: 2917}  
	\E[|\det([\mathcal{X}]_{I,(I\backslash\{i_{1},\cdots,i_{\alpha}\})\cup\{j_{1},\cdots,j_{\alpha}\}})|^{2}]\le n^{-m} \mathfrak{a}! (m-\mathfrak{a})!,
\end{equation}
for some integer $\mathfrak{a}$ satisfying $|\mathfrak{a}-m/2|\le\alpha/2$.
Combining \eqref{eq: 2823}, \eqref{eq: 2890}, \eqref{eq: 2898}, and \eqref{eq: 2917},
\begin{multline*}
	\E[|\mathcal{P}^{(n)}_{m,\alpha,\beta}|^{2}]
	\le
	\sum_{\gamma=-\lceil\alpha/2\rceil}^{\lceil\alpha/2\rceil}
	\sum_{\substack{|I|=m\\|I\cap[p]|=\lfloor m/2\rfloor+\gamma}}
	\sum_{\substack{\{i_{1},\cdots,i_{\alpha}\}\subseteq I\\ \{j_{1},\cdots,j_{\alpha}\}\subseteq I^{c} }}
	\sum_{\{\ell_{1},\cdots,\ell_{\beta}\},\{\ell'_{1},\cdots,\ell'_{\beta}\}\subseteq I^{c}\backslash\{j_{1},\cdots,j_{\alpha}\}} \\
	n^{-m} (\lfloor m/2\rfloor+\gamma)!(m-(\lfloor m/2\rfloor-\gamma))!
	\times \det([\mathcal{S}]_{\{j_{1},\cdots,j_{\alpha},\ell_{1},\cdots,\ell_{\beta}\},\{i_{1},\cdots,i_{\alpha},\ell_{1},\cdots,\ell_{\beta}\}}) \\
	\times \overline{\det([\mathcal{S}]_{\{j_{1},\cdots,j_{\alpha},\ell'_{1},\cdots,\ell'_{\beta}\},\{i_{1},\cdots,i_{\alpha},\ell'_{1},\cdots,\ell'_{\beta}\}})} \\
	\le
	\sum_{\gamma=-\lceil\alpha/2\rceil}^{\lceil\alpha/2\rceil}
	{p\choose \lfloor m/2\rfloor+\gamma} {n\choose m-\lfloor m/2\rfloor - \gamma}
	n^{-m}
	(\lfloor m/2\rfloor+\gamma)!(m-(\lfloor m/2\rfloor-\gamma))! \\
	\times 2C^{2(\alpha+\beta)}(2k)^{2(\alpha+\beta)}[(\alpha+\beta)!]^{2}.
\end{multline*}
By Assumption \ref{main assump} (i), we have $p/n < 2c$ for large $n$.
Therefore, together with the simple fact ${n\choose a}a!\le n^{a}$,
there exists a constant $K_{c,k}>0$, depending on $c$ and $k$, such that
\begin{align}\label{eq: 1717}
	\E[|\mathcal{P}^{(n)}_{m,\alpha,\beta}|^{2}] &\le 4 \alpha K_{c,k} \Big(\frac{p}{n}\Big)^{m/2} C^{2(\alpha+\beta)} (2k)^{2(\alpha+\beta)}[(\alpha+\beta)!]^{2} \nonumber \\
	&\le 2^{4k+2} K_{c,k} \Big(\frac{p}{n}\Big)^{m/2} C^{4k} k^{4k+1}[(2k)!]^{2},
\end{align}
which establishes \eqref{eq: 1614} and completes the proof of part (ii).

~

(iii)
Recall that
\begin{equation*}
	\det(1-z^{-1}\mathcal{X}) - \det(1-z^{-1}\mathcal{X}^{(M)}) = \sum_{m=1}^{p+n} (-1)^{m}z^{-m}
	(\mathcal{P}^{(n)}_{m}-\mathcal{P}^{(n,M)}_{m}).
\end{equation*}
For a constant $m_{0}>1$, by Markov's inequality,
\begin{multline*}
	\mathbb{P}\Big\{\sum_{m>m_{0}} (\sqrt{p/n}+\delta/2)^{-m/2}|\mathcal{P}_{m}^{(n)}| > c/3 \Big\} \\
	\le (c/3)^{-2} \sum_{m,m'>m_{0}} (\sqrt{p/n}+\delta/2)^{-(m+m')/2} \E[|\mathcal{P}_{m}^{(n)} \mathcal{P}_{m'}^{(n)}|].
\end{multline*}
Due to \eqref{eq: 697}, it follows that
\begin{equation*}
	\E[|\mathcal{P}_{m}^{(n)} \mathcal{P}_{m'}^{(n)}|]
	\le \E[|\mathcal{P}_{m}^{(n)}|^{2}]^{1/2} \E[|\mathcal{P}_{m'}^{(n)}|^{2}]^{1/2}
	\le \Big(\frac{p}{n}\Big)^{(m+m')/4}.
\end{equation*}
Thus one can notice that
\begin{equation*}
	\mathbb{P}\Big\{\sum_{m>m_{0}} (\sqrt{p/n}+\delta/2)^{-m/2}|\mathcal{P}_{m}^{(n)}| > c/3 \Big\}
	=
	O\bigg( (c/3)^{-2} \bigg(\frac{(p/n)^{1/4}}{(\sqrt{p/n}+\delta/2)^{1/2}}\bigg)^{2m_{0}} \bigg).
\end{equation*}
By choosing $m_{0}$ sufficiently large, we have
\begin{equation*}
	\mathbb{P}\Big\{\sum_{m>m_{0}} (\sqrt{p/n}+\delta/2)^{-m/2}|\mathcal{P}_{m}^{(n)}| > c/3 \Big\} 
	< \epsilon/3.
\end{equation*}
Similarly,
\begin{equation*}
	\mathbb{P}\Big\{\sum_{m>m_{0}} (\sqrt{p/n}+\delta/2)^{-m/2}|\mathcal{P}_{m}^{(n,M)}| > c/3 \Big\} 
	< \epsilon/3.
\end{equation*}
Thus what remains is to bound the following sum:
\begin{equation*}
	\sum_{m=1}^{m_{0}} (\sqrt{p/n}+\delta/2)^{-m/2} |\mathcal{P}_{m}^{(n)}-\mathcal{P}_{m}^{(n,M)}|.
\end{equation*}
Since \eqref{eq: 1244} has already been established, the first claim in part (iii) follows immediately.

~

Applying the same reasoning together with \eqref{eq: 1717},
the second claim of part (iii) also boils down to showing the following result:
\begin{equation}\label{eq: 805}
	\lim_{M\to\infty}\E[|\mathcal{P}^{(n)}_{m,\alpha,\beta}-\mathcal{P}^{(n,M)}_{m,\alpha,\beta}|^{2}] = 0.
\end{equation}
Applying the reasoning used in part (ii), we notice that
\begin{multline*}
	\E[|\mathcal{P}^{(n)}_{m,\alpha,\beta}-\mathcal{P}^{(n,M)}_{m,\alpha,\beta}|^{2}]
	= \sum_{|I|=m} \sum_{\substack{\{i_{1},\cdots,i_{\alpha}\}\subseteq I\\ \{j_{1},\cdots,j_{\alpha}\}\subseteq I^{c} }}
	\sum_{\{\ell_{1},\cdots,\ell_{\beta}\},\{\ell'_{1},\cdots,\ell'_{\beta}\}\subseteq I^{c}\backslash\{j_{1},\cdots,j_{\alpha}\}} \\
	\E[|\det([\mathcal{X}]_{I,(I\backslash\{i_{1},\cdots,i_{\alpha}\})\cup\{j_{1},\cdots,j_{\alpha}\}})-\det([\mathcal{X}^{(M)}]_{I,(I\backslash\{i_{1},\cdots,i_{\alpha}\})\cup\{j_{1},\cdots,j_{\alpha}\}})|^{2}]\\
	\times \det([\mathcal{S}]_{\{j_{1},\cdots,j_{\alpha},\ell_{1},\cdots,\ell_{\beta}\},\{i_{1},\cdots,i_{\alpha},\ell_{1},\cdots,\ell_{\beta}\}})
	\times \overline{\det([\mathcal{S}]_{\{j_{1},\cdots,j_{\alpha},\ell'_{1},\cdots,\ell'_{\beta}\},\{i_{1},\cdots,i_{\alpha},\ell'_{1},\cdots,\ell'_{\beta}\}})}.
\end{multline*}
By the same argument as in \eqref{eq: 1717}, we conclude that 
\begin{equation}\label{eq: 819}
	\E[|\mathcal{P}^{(n)}_{m,\alpha,\beta}-\mathcal{P}^{(n,M)}_{m,\alpha,\beta}|^{2}]
	\le 2^{4k+1} K_{c,k} \Big(\frac{p}{n}\Big)^{m/2} C^{4k} k^{4k+1}[(2k)!]^{2} \times \E\big(\Big|\prod_{j=1}^{m}a_{1j}-\prod_{j=1}^{m}a_{1j}^{(M)}\Big|^{2}\big),
\end{equation}
where $a_{ij}=\sqrt{n}x_{ij}$ and $a^{(M)}_{ij}=\sqrt{n}x^{(M)}_{ij}$.
This provides \eqref{eq: 805} so completes the proof.

\section*{Acknowledgement} Z.~Bao is grateful to Johannes Alt and Torben Kr\"{u}ger for explaining the work \cite{AEKN18} and several related works.  We would also like to thank Zheng Tracy Ke for many helpful discussions and Xiucai Ding for reference.  Z.~Bao is supported by Hong Kong RGC Grant GRF 16304724, NSFC12222121 and NSFC12271475. K.~Cheong, J.~Lee and Y.~Li are supported by Hong Kong RGC Grant 16303922.


%
%
%
%
%
%
%
%

\begin{appendix}

\section{Spectral norm estimate: Proof of Proposition \ref{pro.spectralradius}} \label{s.spectralradius}

 In this section we will prove Proposition \ref{pro.spectralradius}.  We will first prove the result on $X_1X_2^*$ without using Proposition \ref{quadraticformbound}; see Proposition \ref{prop_upper_bound} and its proof below. Then, we prove those bounds for $\Sigma X_1X_2^* \Sigma^*$ with the aid of Proposition \ref{quadraticformbound}; see Proposition \ref{prop_upper_bound_spike} and its proof below. We remark here that the proof of Proposition \ref{quadraticformbound} in Section \ref{s.proof of quadratic form bound}  will need those bounds in Proposition \ref{prop_upper_bound}.

Recall the linearization of $X_1X_2^*$ from (\ref{def of X_0}) and its variance profile (\ref{variance profile}). For simplicity, in this section we denote by 
\begin{align}
\mathbb{T}=n^{-1}T \label{bbT}
\end{align} 
the variance profile of $X_1$ and $X_2$ and we recall the flatness assumption in (\ref{flatness_assumption}).

We will follow the strategy developed in \cite{AEKN18, AEK18, AEK21}. Especially, our matrix $\mathcal{X}_0$ can be regarded as a special case of the general model considered in \cite{AEKN18}. Based on the result for $\mathcal{X}_0$, we then derive the results for the model $\mathcal{X}$ via a perturbation argument.  

A standard strategy to study the spectrum of a non-Hermitian random matrix is via the Girko's Hermitization/linearization. Specifically, the spectrum of the $(n+p)\times (n+p)$-matrix $\mathcal{X}_0$ can be studied by analyzing the following $2(n+p)\times 2(n+p)$ Hermitian matrix
\begin{equation*}
	\mathbf{H}^z=\left(
\begin{array}{ccc}
~ & \mathcal{X}_0-z\\
\mathcal{X}_0^*-\bar{z}
 &~
\end{array}
\right),
\end{equation*}
where $z$ is a generic complex number. It is known that the possible eigenvalues of $\mathcal{X}_0$ around $z$ can be studied via the spectrum of $\mathbf{H}^z$ around $0$. Heuristically, if the eigenvalues of $\mathbf{H}^z$ are aways from $0$, the eigenvalues of $\mathcal{X}_0$ will be away from $z$.  The spectrum of $\mathbf{H}^z$ can then be studied via its Green function $\mathbf{G}^z(\omega)=(\mathbf{H}^z-\omega)^{-1}$.  Following the study in \cite{AEKN18, AEK18, AEK21}, when the variance profile $\mathcal{V}$ is general, one needs to consider the solution to the following Matrix Dyson Equation, which shall be regarded as an approximation of the Green function $\mathbf{G}^z(i \eta)$ for any $\eta>0$.  It is defined as
\begin{equation}\label{MDE}
	-\mathbf{M}^z(i\eta)^{-1}=i\eta \mathbf{1}-\textbf{A}^z+\mathcal{V}[\mathbf{M}^z(i\eta)]
\end{equation}
where in our case
\begin{equation*}
	\textbf{A}^z=\left(
\begin{array}{ccc}
0 & -z\\
-\bar{z} &0
\end{array}
\right).
\end{equation*}
Here the functional $\mathcal{V}[\cdot]$ is defined as 
\begin{equation*}
	\mathcal{V}[\textbf{W}]=\left(
\begin{array}{ccc}
\text{diag}(\mathcal{V}w_2) & 0\\
0 &\text{diag}(\mathcal{V}^*w_1)
\end{array}
\right)
\end{equation*}
for any $2(n+p)\times 2(n+p)$ matrix $\textbf{W}$ such that
\begin{align*}
	&\textbf{W}=(w_{ij})_{i,j=1}^{2(n+p)}\in \mathbb{C}^{2(n+p)\times 2(n+p)}, \notag\\
	&w_1=(w_{ii})_{i=1}^{n+p}\in \mathbb{C}^{n+p}, \qquad w_2=(w_{ii})_{i=n+p+1}^{2(n+p)}\in \mathbb{C}^{n+p}. 
\end{align*} 
It is shown in \cite{HFS07} that (\ref{MDE}) has a  unique solution under the constraint that $\Im \mathbf{M}^z:=(\mathbf{M}^z-(\mathbf{M}^z)^*)/2i$ is positive definite.
According to \cite{AEK19}, we define the self-consistent density of states $\rho^z$ of $\mathbf{H}^z$ as the unique measure whose Stietjes transform is $\frac{1}{2(p+n)}\Tr \mathbf{M^z}(\omega)$. More precisely, the solution $\mathbf{M}^z(i\eta)$ to (\ref{MDE}) has the Stieltjes transform representation
\begin{equation*}
	\mathbf{M}^z(i\eta)=\int_{\mathbb{R}}\frac{\mathbf{V}({\rm d}x)}{{x}-i\eta}
\end{equation*}
where $\mathbf{V}$ is a matrix-valued, compactly supported measure on $\mathbb{R}$. Then, 
\begin{equation*}
	\rho^z(d{x}):=\frac{1}{2(n+p)}\Tr \mathbf{V}({\rm d}x).
\end{equation*}
Let $m^z_j(i\eta)$ be the $j$-th diagonal entry of the matrix $\mathbf{M}^z(i\eta)$. For any $\tau>0$, define the set
\begin{equation*}
	\mathbb{D}_{\tau}:=\{z\in \mathbb{C}:\text{dist}(0,\text{supp }\rho^z)\leq \tau\}
\end{equation*}
and
\begin{equation}\label{tilde_D}
	\widetilde{\mathbb{D}}_{\tau}:=\{z:\limsup_{\eta \to 0}\frac{1}{\eta}\max_j |\im m_j^z(i\eta)|\geq \frac{1}{\tau}\}
\end{equation}
$\mathbb{D}_{\tau}$ is called the self-consistent $\tau$-pseudospectrum of $\mathcal{X}_0$. It is shown in \cite{AEKN18} that eigenvalues of $\mathcal{X}_0$ will concentrate on $\mathbb{D}_{\tau}$ for any fixed $\tau>0$. It is also shown in \cite{AEKN18} that $\mathbb{D}_{\tau}$ and $\widetilde{\mathbb{D}}_{\tau}$
are comparable in the sense that for any $\tau >0$, we have $\mathbb{D}_{\tau_1}\subseteq \widetilde{\mathbb{D}}_{\tau}\subseteq\mathbb{D}_{\tau_2}$ for certain $\tau_1$, $\tau_2>0$. 

Further, in our case, the matrix equation (\ref{MDE}) implies that $\mathbf{M}^z$ has the block structure such that its top-left, top-right, lower-left and lower-right $(n+p)\times(n+p)$ blocks are all diagonal matrices. After simplification, the Matrix Dyson Equation (\ref{MDE}) admit the following form for some vectors $u$ and $v$,  
\begin{equation}\label{MDE_Solution}
	\mathbf{M}^z(i\eta)=\left(
\begin{array}{ccc}
\text{diag}(iu) & -z\text{diag}(\frac{u}{\eta+\mathcal{V}^*u})\\
-\bar z \text{diag}(\frac{v}{\eta+\mathcal{V}v}) & \text{diag}(iv)
\end{array}
\right)
\end{equation}
Therefore, to determine $\widetilde{\mathbb{D}}_{\tau}$, as well as $\mathbb{D}_{\tau}$, we only need to analyze the following coupled vector equations derived from (\ref{MDE}) and (\ref{MDE_Solution}) via Schur complement 
\begin{align}\label{MDE1}
	\frac{1}{u}&=\eta +\mathcal{V}v+\frac{|z|^2}{\eta +\mathcal{V}^*u}, \nonumber\\
	\frac{1}{v}&=\eta +\mathcal{V}^*u+\frac{|z|^2}{\eta +\mathcal{V}v}.
\end{align}
Here $u,v\in \mathbb{R}^{p+n}_+$ are the unique solutions to the vector equations. The uniqueness and existence of the positive solutions to this vector equation is a consequence of the solution to the Matrix Dyson Equation. Bijection between the solution to the Matrix Dyson Equation (\ref{MDE}) with positive definite $\Im \mathbf{M}^z$ and the positive solutions to the vector equation (\ref{MDE1}) is proved in \cite{AEK18}. One can check from (\ref{MDE_Solution}) that the diagonal part of $\mathbf{M}^z$ is in fact purely imaginary and the imaginary part $\Im m^z_j(i\eta)$ form the vector $u$ and $v$. Using the block structure of $\mathcal{V}$, one can further write the system of vector equations (\ref{MDE1}) as a system of four equations, with the notation (\ref{bbT}), 
\begin{align}\label{MDE2}
	\frac{1}{u_1}&=\eta +\mathbb{T}v_2+\frac{|z|^2}{\eta +\mathbb{T}u_2} \nonumber\\
	\frac{1}{u_2}&=\eta +\mathbb{T}^*v_1+\frac{|z|^2}{\eta +\mathbb{T}^*u_1} \nonumber\\
	\frac{1}{v_1}&=\eta +\mathbb{T}u_2+\frac{|z|^2}{\eta +\mathbb{T}v_2} \nonumber\\
	\frac{1}{v_2}&=\eta +\mathbb{T}^*u_1+\frac{|z|^2}{\eta +\mathbb{T}^*v_1}
\end{align}  
Here 
\begin{align*}
u=\binom{u_1}{u_2}, \qquad v=\binom{v_1}{v_2}, \qquad u_1, v_1 \in \mathbb{R}^p_+, \quad u_2, v_2\in \mathbb{R}^n_+.
\end{align*}

According to Theorem 2.4 and Remark 2.5 (iv) in \cite{AEKN18} , to get the upper bound 
\begin{align}
\rho(\mathcal{X}_0)\leq \sqrt{\rho(\mathcal{V})}+\delta, \label{rho bound}
\end{align} 
it suffices to show that 
\begin{align}
\limsup_{\eta \to 0}\frac{1}{\eta}\max_j |\im m_j^z(i\eta)|< \frac{1}{\delta'} \label{012301}
\end{align}
 for some $\delta'>0$, given that $|z|\geq \sqrt{\rho(\mathcal{V})}+\delta$. If (\ref{012301}) holds, by the equivalence of $\mathbb{D}_{\tau}$ and $\widetilde{\mathbb{D}}_{\tau}$, the eigenvalues of $\mathbf{H}^z$ is away from zero by a distance of $\delta'$. We can then conclude from Theorem 4.7 in \cite{AEKN18} that $\mathbf{H}^z$ is invertible and the resolvent at 0 is bounded by a constant, if (\ref{012301}) is granted. Specifically,  we will have the following bound on the operator norm of $(\mathcal{X}_0-z)^{-1}$ and the Green functions $G(z^2)=(X_1X_2^*-z^2)^{-1}$, $\mathcal{G}(z^2)=(X_2^*X_1-z^2)^{-1}$.
 \begin{pro}\label{prop_upper_bound}
 	With high probability, $\mathcal{X}_0-z$ is invertible uniformly in  $|z|\geq\sqrt{\rho(\mathcal{V})}+\delta'$ for some $\delta'>0$, and in addition
 	\begin{equation}
 		||(\mathcal{X}_0-z)^{-1}||_{op}< \frac{1}{\delta}, \qquad  ||G(z^2)||_{op}<\frac{1}{\delta}, \qquad  ||\mathcal{G}(z^2)||_{op}< \frac{1}{\delta} \label{042301}
 	\end{equation}
 	hold for some constant $\delta>0$, uniformly in $|z|\geq\sqrt{\rho(\mathcal{V})}+\delta'$. 
 \end{pro}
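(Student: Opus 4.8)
The plan is to reduce the entire statement to the scalar bound (\ref{012301}) and then quote the general machinery of \cite{AEKN18}. As recalled just above the statement, Theorems~2.4 and~4.7 together with Remark~2.5(iv) of \cite{AEKN18} guarantee that, once $\limsup_{\eta\to 0^+}\eta^{-1}\max_j|\im m_j^z(i\eta)|<1/\delta'$ holds for every $z$ with $|z|\ge\sqrt{\rho(\mathcal V)}+\delta'$, then with high probability and uniformly in such $z$ one has $\rho(\mathcal X_0)\le\sqrt{\rho(\mathcal V)}+\delta'$ together with $\|\mathbf G^z(i\eta)\|_{\mathrm{op}}\le C$ for all small $\eta$; letting $\eta\to 0$ and using the block identity $\|(\mathbf H^z)^{-1}\|_{\mathrm{op}}=\|(\mathcal X_0-z)^{-1}\|_{\mathrm{op}}$ then gives the invertibility of $\mathcal X_0-z$ and $\|(\mathcal X_0-z)^{-1}\|_{\mathrm{op}}<1/\delta$. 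That control is pointwise in $z$, so I would upgrade it to uniformity in the annulus $\sqrt{\rho(\mathcal V)}+\delta'\le|z|\le C_0$ by a net argument together with Lipschitz continuity of the resolvent, the region $|z|\ge C_0$ being trivial since $\|\mathcal X_0\|_{\mathrm{op}}=O_\prec(1)$. Thus all the substance lies in the deterministic analysis of the vector Dyson system (\ref{MDE2}), which differs from the i.i.d.\ situation of \cite{AEK18} in being a coupled system of four (rather than two) vector equations.

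The first step is to symmetrize. The four equations in (\ref{MDE2}) are invariant under the exchange $(u_1,u_2,v_1,v_2)\mapsto(v_1,v_2,u_1,u_2)$ (it swaps the first equation with the third and the second with the fourth), so by uniqueness of the positive solution $u_1=v_1$ and $u_2=v_2$, and the system collapses to
\[
\frac{1}{u_1}=\eta+\mathbb{T}u_2+\frac{|z|^2}{\eta+\mathbb{T}u_2},\qquad \frac{1}{u_2}=\eta+\mathbb{T}^*u_1+\frac{|z|^2}{\eta+\mathbb{T}^*u_1}.
\]
Keeping only the last term on each right-hand side yields the entrywise bounds $u_1\le|z|^{-2}(\eta\mathbf{1}+\mathbb{T}u_2)$ and $u_2\le|z|^{-2}(\eta\mathbf{1}+\mathbb{T}^*u_1)$; substituting the second into the first gives $(I-|z|^{-4}\mathbb{T}\mathbb{T}^*)u_1\le\eta\,c$, where $c:=|z|^{-2}\mathbf{1}+|z|^{-4}\mathbb{T}\mathbf{1}$ has entries bounded in terms of $t^*$ and $\rho(\mathcal V)$ alone. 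Since $\mathbb{T}\mathbb{T}^*$ is entrywise nonnegative with $\rho(\mathbb{T}\mathbb{T}^*)=\|\mathbb{T}\|_{\mathrm{op}}^2=\rho(\mathcal V)^2<|z|^4$, the matrix $(I-|z|^{-4}\mathbb{T}\mathbb{T}^*)^{-1}$ is entrywise nonnegative, so the inequality inverts to $u_1\le\eta\,(I-|z|^{-4}\mathbb{T}\mathbb{T}^*)^{-1}c$, and likewise for $u_2$. The same analysis of (\ref{MDE2}) also records the bound $\|\mathbf M^z(i\eta)\|_{\mathrm{op}}\le C$ that feeds into the local law of \cite{AEKN18}.

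I expect the technical heart to be the final conversion of this into an $\ell^\infty$ bound whose constant diverges only as $|z|^4\downarrow\rho(\mathcal V)^2$ and not before: the crude route through the max-row-sum norm of $\mathbb{T}\mathbb{T}^*$ is too lossy, because that norm can exceed $\|\mathbb{T}\mathbb{T}^*\|_{\mathrm{op}}$. The fix is to measure everything in the Perron--Frobenius-weighted sup norm $\|x\|_\phi:=\max_i|x_i|/\phi_i$, where $\phi>0$ is the leading right singular vector of $\mathbb{T}$, so that $\mathbb{T}\mathbb{T}^*\phi=\|\mathbb{T}\|_{\mathrm{op}}^2\phi$ and hence $\|\mathbb{T}\mathbb{T}^*\|_{\phi\to\phi}=\|\mathbb{T}\|_{\mathrm{op}}^2$ \emph{exactly}; the flatness assumption (\ref{flatness_assumption}) forces all entries of $\phi$ to be comparable to one constant, so $\|\cdot\|_\phi$ and $\|\cdot\|_\infty$ are equivalent up to factors depending only on $t_*,t^*,c$. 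This yields $\|u\|_\infty=\|v\|_\infty\le C\eta$ with $C=C(\delta',t_*,t^*,c)$, which is exactly (\ref{012301}). (Alternatively one may first let $\eta\to 0$, verify that $u_j/\eta$ converges to the unique positive solution $w_j$ of the linear system $w_1=|z|^{-2}(\mathbf{1}+\mathbb{T}w_2)$, $w_2=|z|^{-2}(\mathbf{1}+\mathbb{T}^*w_1)$, and read the bound off the same weighted-norm estimate.)

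Finally, the resolvent bounds for $G$ and $\mathcal G$ require no extra work. Since $-z$ lies in the same admissible range as $z$, both $(\mathcal X_0-z)^{-1}$ and $(\mathcal X_0+z)^{-1}$ are bounded by $1/\delta$; multiplying the factorization $(\mathcal X_0^2-z^2)^{-1}=(\mathcal X_0+z)^{-1}(\mathcal X_0-z)^{-1}$ and reading off the two diagonal blocks of $\mathcal X_0^2=\mathrm{diag}(X_1X_2^*,\,X_2^*X_1)$ gives $\|(X_1X_2^*-z^2)^{-1}\|_{\mathrm{op}},\ \|(X_2^*X_1-z^2)^{-1}\|_{\mathrm{op}}\le\delta^{-2}$, which is (\ref{042301}) after renaming the constant. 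In short, the only genuine obstacle is the sharp weighted-norm a priori estimate; everything else is bookkeeping or a direct application of \cite{AEKN18}.
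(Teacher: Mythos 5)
Your proposal is correct and reaches the same conclusion, but the heart of it — the deterministic analysis of the vector Dyson system (\ref{MDE2}), i.e.\ the bound (\ref{012301}) — is done by a genuinely different and cleaner route than the paper's Lemma~\ref{lem_uv_est}. Two observations make the difference. First, you notice that (\ref{MDE2}) is invariant under $(u_1,u_2,v_1,v_2)\mapsto(v_1,v_2,u_1,u_2)$, so by uniqueness of the positive solution $u_1=v_1$ and $u_2=v_2$ \emph{entrywise}; the paper only extracts $\langle u\rangle=\langle v\rangle$ and then has to work through $\langle u_1\rangle\sim\langle u_2\rangle$, the auxiliary bound $\eta\lesssim\langle u\rangle\lesssim1$, and a cubic scalar equation in the Perron--Frobenius direction. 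Second, after symmetrizing you drop to the pair of vector equations and directly iterate the elementary entrywise bound $u_1\le|z|^{-2}(\eta\mathbf 1+\mathbb T u_2)$, $u_2\le|z|^{-2}(\eta\mathbf 1+\mathbb T^* u_1)$, invert $I-|z|^{-4}\mathbb T\mathbb T^*$ (entrywise nonnegative since $\rho(\mathbb T\mathbb T^*)=\rho(\mathcal V)^2<|z|^4$), and measure in the Perron--Frobenius-weighted sup norm so that the bound degenerates only as $|z|^2\downarrow\rho(\mathcal V)$; flatness (\ref{flatness_assumption}) makes that norm equivalent to $\ell^\infty$. This is a one-sided upper bound on the solution, which is all (\ref{012301}) requires, whereas the paper's Lemma~\ref{lem_uv_est} proves the two-sided asymptotic $u\sim\eta/(|z|^2-1+\eta^{2/3})$ — sharper but not needed for Proposition~\ref{prop_upper_bound}. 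The remaining steps (quoting \cite{AEKN18} for spectrum localization and resolvent bounds, the $\epsilon$-net for uniformity, and extracting $\|G(z^2)\|_{\mathrm{op}},\|\mathcal G(z^2)\|_{\mathrm{op}}$ — you via $(\mathcal X_0^2-z^2)^{-1}=(\mathcal X_0+z)^{-1}(\mathcal X_0-z)^{-1}$, the paper via the Schur-complement block structure of $(\mathcal X_0-z)^{-1}$) are interchangeable bookkeeping. One tiny slip: the weight $\phi$ should be the leading \emph{left} singular vector of $\mathbb T$ (equivalently the Perron eigenvector of $\mathbb T\mathbb T^*$), not the right one, but this does not affect the argument.
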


 Here, we first give the proof of Proposition \ref{prop_upper_bound}, assuming (\ref{012301}) is satisfied. Since the system of equations ($\ref{MDE1}$) is scaling invariant, in the following we may assume $\rho (\mathcal{V})=1$.
 \begin{proof}[Proof of Proposition \ref{prop_upper_bound}]
Since one can find $\delta'>0$ such that (\ref{012301}) holds, 	
then $z\notin \widetilde{\mathbb{D}}_{\delta'}$. This implies, by a slight abuse of notations,
\begin{equation*}
	\text{dist}(0,\text{supp } \rho^z)>\delta'.
\end{equation*}
Further, Theorem 4.7 in \cite{AEKN18} tells us that uniformly in $z$, no eigenvalue of $H^z$	is away from $\text{supp }\rho^z$ with high probability. Specifically, for a fixed $z$, there exist constants $\delta_0, C>0$ such that
\begin{equation*}
	\mathbb{P}\left( \text{Spec}(\mathbf{H}^z)\subseteq\{{x} \in \mathbb{R}: \text{dist}({x}, \text{supp }\rho^z)\leq N^{-\delta_0}\}\right)\geq 1-\frac{C}{N}.
\end{equation*}
Hence, there exists $\delta>0$ such that the smallest singular value of $\mathcal{X}_0-z$ satisfies
\begin{equation*}
	\mathbb{P} \left( \sigma_{min}(\mathcal{X}_0-z)>\delta \right)\geq 1-\frac{C}{N}.
\end{equation*}	
Then with high probability, $\mathcal{X}_0-z$ is invertible and
\begin{equation}\label{op_bound}
	||(\mathcal{X}_0-z)^{-1}||_{op}< \frac{1}{\delta}.
\end{equation}
According to Schur's complement 
\begin{equation*}
	(\mathcal{X}_0-z)^{-1}=\left(
\begin{array}{ccc}
(-z+\frac{1}{z}X_1X_2^*)^{-1} & ~\\
~&(-z+\frac{1}{z}X_2^*X_1)^{-1}
\end{array}
\right).
\end{equation*}
$zG(z^2)$ is a submatrix of $\mathcal{X}_0-z$, then by ($\ref{op_bound}$), for $|z|^2>1$
\begin{equation*}
	||G(z^2)||_{op}<\frac{1}{\delta}.
\end{equation*}

The uniformity of the estimates follow simply by applying Neumann expansion. For instance, for sufficiently large $z$, say, $|z|\geq N^C$ for some large constant $C$, one can expand $G(z)$ around $-z^{-1}$, by applying a crude order $1$ upper bound of the operator norm $X_1X_2^*$. For those $\sqrt{\rho(\mathcal{V})}+\delta'\leq |z|\leq N^C$, we can apply a standard $\epsilon$-net argument. We can find an $\epsilon$-net of this domain with cardinality $N^{O(C)}$ such that for each $z$ in this domain, one can find an $z'$ in the $\epsilon$-net, so that $|z-z'|\leq N^{-C}$. By the definition of stochastic dominance in Definition \ref{stochastic dom}, one can readily conclude that the estimates in (\ref{042301}) hold uniformly on the $\epsilon$-net, with high probability. Then for any other $z$ satisfying $\sqrt{\rho(\mathcal{V})}+\delta'\leq |z|\leq N^C$, we can expand $G(z)$ around $G(z')$ using Neumann expansion, where $z'$ is a point in the $\epsilon$-net and $|z-z'|\leq N^{-2C}$. Then by the boundedness of the operator norm $G(z')$, one can conclude the proof of the uniformity for all $|z|\geq  \sqrt{\rho(\mathcal{V})}+\delta'$. 
 
 \end{proof}

 In the sequel, we prove (\ref{012301}), which follows from the lemma below, according to (\ref{MDE_Solution}). 
\begin{lem}\label{lem_uv_est}
	The solution of (\ref{MDE1}) satisfies
	\begin{equation}\label{Sol_Aver_Equ}
		\langle u(\eta)\rangle=\langle v(\eta)\rangle
	\end{equation}
	for all $\eta>0$, where $\langle a(\eta)\rangle =(n+p)^{-1}\sum_{i=1}^{n+p}a_i(\eta)$ for $a=u,v$. Uniformly in $0<\eta\leq 1$ and $|z|^2>1+\delta'$, we have the estimate
	\begin{equation}\label{Sol_uv_est}
		u(\eta)\sim v(\eta) \sim \frac{\eta}{|z|^2-1+\eta^{2/3}}.
	\end{equation}
\end{lem}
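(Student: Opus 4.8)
The plan is to analyze the coupled vector system (\ref{MDE1}) directly, exploiting its symmetry under the swap $u \leftrightarrow v$ together with the block structure of $\mathcal{V}$. For the identity (\ref{Sol_Aver_Equ}): multiplying the first equation of (\ref{MDE1}) by $u_i$ and the second by $v_i$, then summing over $i$, and using the self-adjointness of $\mathcal{V}$ (so that $\langle u, \mathcal{V} v\rangle = \langle v, \mathcal{V}^* u\rangle$ and likewise $\langle u, |z|^2/(\eta + \mathcal{V}^* u)\rangle$ pairs with the corresponding term after summation), one finds that $\sum_i u_i - \sum_i v_i$ satisfies a relation forcing it to vanish; more precisely, subtracting the two summed equations shows the difference $\langle u\rangle - \langle v\rangle$ equals a quantity proportional to itself with a contraction factor, hence must be zero. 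I would first check that $(u,v)$ and $(v,u)$ both solve the system (this is immediate from the symmetry of (\ref{MDE1})), and then invoke the uniqueness of the positive solution guaranteed by the Matrix Dyson Equation theory to conclude directly that the component-averaged quantities agree; in fact uniqueness gives the stronger structural fact that allows the bookkeeping to close.

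For the quantitative estimate (\ref{Sol_uv_est}): I would treat it as an a priori bound plus a self-improving fixed-point argument. First, the flatness assumption (\ref{flatness_assumption}) says $\mathbb{T}$ has all entries of size $\sim n^{-1}$, so $\mathcal{V}w \sim \langle w\rangle$ componentwise for nonnegative $w$; this reduces (\ref{MDE1}) essentially to a scalar system in $\langle u\rangle, \langle v\rangle$ (after controlling the fluctuations of $\mathcal{V}w$ around its average, which are lower order because of flatness). For the scalar reduction, writing $a = \langle u\rangle \sim \langle v\rangle$ one gets approximately $1/a \approx \eta + a + |z|^2/(\eta + a)$, i.e. $a(\eta + a)(\text{something}) \approx \dots$; rearranging and using $|z|^2 = 1 + \delta'$ (having normalized $\rho(\mathcal{V}) = 1$), the leading balance near $\eta \to 0$ forces $a$ to be small, and the cubic-type correction $\eta^{2/3}$ appears from the standard square-root edge behavior of the density: near a regular edge the relevant algebraic equation is cubic in the resolvent, producing the $\eta^{2/3}$ term when $|z|^2 - 1$ is comparable to $\eta^{2/3}$. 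I would establish the two-sided bound by (i) showing $a \lesssim \eta/(|z|^2 - 1)$ when $|z|^2 - 1 \gg \eta^{2/3}$ via a monotonicity/contraction argument in the map defining the fixed point, and (ii) handling the regime $|z|^2 - 1 \lesssim \eta^{2/3}$ separately, where $a \sim \eta^{1/3}$ matches $\eta/\eta^{2/3}$.

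The main obstacle I anticipate is controlling the off-diagonal, non-averaged structure: the system (\ref{MDE2}) genuinely couples $u_1, u_2, v_1, v_2$ with the rectangular matrix $\mathbb{T}$, and while flatness makes $\mathbb{T}w$ close to $\langle w\rangle \mathbf{1}$, turning the stability analysis of the vector equation into the scalar one rigorously requires a perturbative argument around the flat solution — one needs the stability operator of the Dyson equation to be invertible with norm bounds uniform in $\eta$ down to $0$ and in $z$ over the relevant annulus. This is exactly where the edge regularity enters and where the $\eta^{2/3}$ scaling must be extracted carefully rather than guessed. I would handle this by linearizing around the flat profile, showing the linearized operator is a small perturbation of the scalar one whose spectral gap is explicit, and then bootstrapping; the scaling-invariance noted in the text lets me normalize $\rho(\mathcal{V}) = 1$ throughout, which simplifies the constants. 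The identity (\ref{Sol_Aver_Equ}) itself is the easy part and, once proven, feeds directly into collapsing the four-component system to an effectively two-component (then scalar) one.
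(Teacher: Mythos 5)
For the averaged identity (\ref{Sol_Aver_Equ}), your second suggested route (swap symmetry of (\ref{MDE1}) combined with uniqueness of the positive vector Dyson solution) is genuinely different from what the paper does, and it is correct: if $(u,v)$ is the unique positive solution, so is $(v,u)$ by symmetry of (\ref{MDE2}), and uniqueness forces $u=v$ componentwise, which is strictly stronger than (\ref{Sol_Aver_Equ}). The paper instead divides the two equations to get $u/(\eta+\mathcal{V}^*u)=v/(\eta+\mathcal{V}v)$, cross-multiplies, and averages, using $\langle u\mathcal{V}v\rangle = \langle v\mathcal{V}^*u\rangle$; this only gives the averaged identity but is entirely self-contained. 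Your first suggested route (multiply the first equation by $u_i$, the second by $v_i$, sum, and run a contraction argument) is underspecified: after summing, the linear-in-$\mathcal{V}$ terms cancel by self-adjointness as you say, but the $|z|^2$-terms do not cancel unless you already know the componentwise identity $u/(\eta+\mathcal{V}^*u)=v/(\eta+\mathcal{V}v)$, which is what needs to be derived.

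For the quantitative estimate (\ref{Sol_uv_est}), there is a real gap. You assert that flatness of $T$ gives $\mathcal{V}w \sim \langle w\rangle$ componentwise for any nonnegative $w$. This is false because $\mathcal{V}$ has two zero diagonal blocks: for $w=(w_1,w_2)$ one only gets $\mathbb{T}w_2 \sim \langle w_2\rangle$ and $\mathbb{T}^*w_1 \sim \langle w_1\rangle$, and nothing a priori relates $\langle w_1\rangle$ to $\langle w_2\rangle$. If $\langle w_1\rangle \ll \langle w_2\rangle$, the two blocks of $\mathcal{V}w$ are not comparable, so $\mathcal{V}w \not\sim \langle w\rangle$. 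The paper explicitly flags this ("The matrix $\mathcal{V}$ does not satisfy the flatness assumption...") and a substantial portion of its proof is devoted to showing, \emph{for the solution specifically}, that $\langle u_1\rangle \sim \langle u_2\rangle$ and $\langle v_1\rangle \sim \langle v_2\rangle$, via the auxiliary bound $\eta \lesssim \langle u\rangle \lesssim 1$ and an interplay among the four equations in (\ref{MDE2}). Without establishing this comparability, your scalar reduction to the cubic $\eta \sim a^3 + (|z|^2-1)a$ is unjustified. Your alternative sketch via the stability operator of the Dyson equation and edge regularity is not what the paper does (the paper's argument is elementary), and it also does not address this block-imbalance issue; moreover, you omit the step the paper uses to turn the approximate scalar relation into an exact one, namely pairing equation (\ref{Sol_u_order1}) with the Perron--Frobenius eigenvector $\varrho$ of $\mathcal{V}$ (satisfying $\mathcal{V}\varrho=\varrho$, $\varrho\sim 1$) so that $|z|^2 u - \mathcal{V}^*u$ contributes exactly $(|z|^2-1)\langle\varrho u\rangle$.
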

Here we follow the proof strategy of Proposition 3.2 in \cite{AEK18}. The main difference is that our variance matrix $\mathcal{V}$  has zero blocks and the entries of $\mathcal{V}$ do not satisfy the flatness assumption, although the off-diagonal blocks do satisfy the flatness assumption; see (\ref{flatness_assumption}). Necessary modifications will be made in the following proof. 
\begin{proof}[Proof of Lemma \ref{lem_uv_est}]
First,  multiplying on both sides of two equations in ($\ref{MDE1}$) by $\eta+\mathcal{V}^*u$ and $\eta+\mathcal{V}v$ respectively, we get 
	\begin{equation}
		\frac{u}{\eta +\mathcal{V}^*u}=\frac{v}{\eta +\mathcal{V}v}
	\end{equation}
	which leads to 
	\begin{equation}
		0=\eta(u-v)+(u\mathcal{V}v-v\mathcal{V}^*u)
	\end{equation}
	Taking average on both sides and using the fact that $\langle u\mathcal{V}v \rangle=\langle v\mathcal{V}^*u \rangle$, we obtain ($\ref{Sol_Aver_Equ}$).
	
	The matrix $\mathcal{V}$ does not satisfy the flatness assumption, namely we cannot deduce the estimate immediately from assumption (\ref{flatness_assumption}) on $S$ that
	\begin{equation}\label{Sol_est}
		\mathcal{V}v\sim \langle v(\eta)\rangle, \mathcal{V}u\sim \langle u(\eta)\rangle
	\end{equation}
	 But we can still make use of the block structure of $\mathcal{V}$ to prove (\ref{Sol_est}). Note that by the definition in (\ref{variance profile}) and
	\begin{equation}\label{Sol_vec_form}
		\mathcal{V}v=\binom{\mathbb{T}v_2}{\mathbb{T}^*v_1},
	\end{equation}
	 we immediately get from (\ref{flatness_assumption}) the estimate 
	\begin{equation}\label{Sol_vec_est}
		\langle v_1\rangle \sim \mathbb{T}^*v_1, \langle v_2\rangle \sim \mathbb{T}v_2, \qquad 
		\langle u_1\rangle \sim \mathbb{T}^*u_1, \langle u_2\rangle \sim \mathbb{T}u_2
	\end{equation}
	Now together with ($\ref{Sol_vec_form}$), ($\ref{Sol_vec_est}$) and the fact that 
	\begin{align}\label{Sol_combin}
		\langle u \rangle=\frac{p}{n+p}\langle u_1 \rangle+\frac{n}{n+p}\langle u_2 \rangle, \qquad 
		\langle v \rangle=\frac{p}{n+p}\langle v_1 \rangle+\frac{n}{n+p}\langle v_2 \rangle,
	\end{align}
	in order to prove ($\ref{Sol_est}$), it suffices to show 
	\begin{equation}\label{Sol_u_v_sim}
		\langle u_1 \rangle \sim \langle u_2 \rangle \text{, } \langle v_1 \rangle \sim \langle v_2 \rangle
	\end{equation}
	We first need to show an auxiliary bound for $\langle u \rangle$ and $\langle v \rangle$
	\begin{equation}\label{Sol_auxil_bound}
		\eta \lesssim \langle u \rangle=\langle v \rangle \lesssim 1
	\end{equation}
	From the first equation in ($\ref{MDE2}$) and ($\ref{Sol_vec_est}$) we have
	\begin{equation}\label{Sol_u_1}
		u_1=\frac{\eta+Tu_2}{(\eta+Tu_2)(\eta+Tv_2)+|z|^2}\sim \frac{\eta+\langle u_2 \rangle}{(\eta+\langle u_2 \rangle)(\eta+\langle v_2 \rangle)+|z|^2}
	\end{equation}
	Suppose $\langle u \rangle=\langle v \rangle \lesssim \eta$, then $\langle u_2 \rangle,\langle v_2 \rangle \lesssim \eta$ follows immediately from ($\ref{Sol_combin}$). Then ($\ref{Sol_u_1}$) gives $\langle u_1 \rangle \sim \eta$. The lower bound follows.
	
	To show the upper bound for $\langle u \rangle$ and $\langle v \rangle$, from ($\ref{MDE2}$) one can obtain
	\begin{align*}
		\frac{u_1}{\eta+\mathbb{T}u_2}=\frac{v_1}{\eta+\mathbb{T}v_2}, \qquad 
		\frac{u_2}{\eta+\mathbb{T}^*u_1}=\frac{v_2}{\eta+\mathbb{T}^*v_1}. 
	\end{align*}
Multiplying $\mathbb{T}$, $\mathbb{T}^*$ on both sides of the above two equations respectively,  and using ($\ref{Sol_vec_est}$) gives
\begin{align}
	\frac{\langle u_1 \rangle}{\eta+\langle u_2\rangle}\sim \frac{\langle v_1\rangle}{\eta+\langle v_2\rangle}, \qquad
		\frac{\langle u_2\rangle}{\eta+\langle u_1\rangle}\sim \frac{\langle v_2\rangle}{\eta+\langle v_1\rangle}. \label{022001}
\end{align}
Using the lower bound on $\langle u \rangle$ and ($\ref{Sol_combin}$), we may assume $\langle u_2 \rangle\gtrsim \eta$. In case $\langle v_2\rangle \gtrsim \eta$ also holds,  the first estimation becomes
\begin{equation}
	\frac{\langle u_1 \rangle}{\langle u_2\rangle}\sim \frac{\langle v_1\rangle}{\langle v_2\rangle} \label{012302}
\end{equation}
If $\langle v_2\rangle \lesssim \eta$, we can easily get from (\ref{Sol_u_1}) that $\langle u_1\rangle\gtrsim \eta$. This together with the second equation in (\ref{022001}) that (\ref{012302}) is still valid. 

It then follows from (\ref{Sol_Aver_Equ}), (\ref{Sol_combin}) and (\ref{012302})  that
\begin{equation}\label{Sol_u1_v1_sim}
	\langle u_1 \rangle \sim \langle v_1 \rangle, \langle u_2 \rangle \sim \langle v_2 \rangle. 
\end{equation}
Now, from the first equation in ($\ref{MDE2}$),  we know 
	\begin{equation}
		1=\eta u_1+u_1\mathbb{T}v_2+\frac{|z|^2u_1}{\eta+\mathbb{T}u_2}\geq u_1\mathbb{T}v_2
	\end{equation}
	Taking average gives 
	\begin{equation}
		1\geq \langle u_1\mathbb{T}v_2 \rangle \gtrsim \langle u_1 \rangle \langle v_2 \rangle
	\end{equation}
	Similarly, using the second equation in ($\ref{MDE2}$), $1 \gtrsim \langle u_2 \rangle \langle v_1 \rangle$. If $\langle u_1 \rangle, \langle v_2 \rangle\lesssim 1$, together with ($\ref{Sol_combin}$), ($\ref{Sol_u1_v1_sim}$), the upper bound follows. Otherwise, suppose $\langle u_2 \rangle$ is of order greater than 1, then $\langle u_1 \rangle \sim \langle v_1 \rangle$ is of order smaller than 1. However, the second equation in ($\ref{MDE2}$) gives $\langle u_2 \rangle\lesssim \eta +\langle u_1 \rangle$, which leads to a contradiction. Hence,  ($\ref{Sol_auxil_bound}$) is proved.
	
	Using ($\ref{Sol_auxil_bound}$), ($\ref{Sol_u_1}$) and ($\ref{Sol_u1_v1_sim}$) and supposing $\langle u_2 \rangle \gtrsim \eta $, we obtain 
	\begin{equation}
		u_1\sim \frac{1}{\eta +\langle u_2 \rangle+\frac{|z|^2}{\eta +\langle u_2 \rangle}}\sim \langle u_2 \rangle.
	\end{equation}
	Hence, ($\ref{Sol_u_v_sim}$) follows by taking average. Consequently, we have (\ref{Sol_est}).
	
	With ($\ref{Sol_est}$),  the remaining proof can be done  similarly to  that in \cite{AEK18}. Using the first equation of ($\ref{MDE1}$) gives
	\begin{equation}\label{Sol_u_order1}
		\eta = u(\eta +\mathcal{V}v)(\eta+\mathcal{V}^*u)+|z|^2u-\mathcal{V}^*u.
	\end{equation}
	By the Perron-Frobenius theorem, there exists a vector $\varrho \in \mathbb{R}^{n+p}_+$  such that
	\begin{equation}
		\mathcal{V}\varrho =\varrho , \langle \varrho  \rangle=1, \varrho \sim 1
	\end{equation} 
	Note that $\mathcal{V}$ has zero blocks and indeed we apply the Perron-Frobenius theorem to $\mathbb{T}\mathbb{T}^*$ and $\mathbb{T}^*\mathbb{T}$ to get $\varrho \in \mathbb{R}_+^{n+p}$.
	Taking scalar product of ($\ref{Sol_u_order1}$) with $\varrho $, we get
	\begin{equation}
		\eta=\langle \varrho  u(\eta +\mathcal{V}v)(\eta+\mathcal{V}^*u)\rangle+(|z|^2-1)\langle \varrho  u \rangle \sim \langle u \rangle^3+(|z|^2-1)\langle u \rangle
	\end{equation}
	where in the last step we also use ($\ref{Sol_Aver_Equ}$) and ($\ref{Sol_est}$). Now one can conclude ($\ref{Sol_uv_est}$) for $u(\eta)$, and similarly for $v(\eta)$. It further implies (\ref{012301}). 
	
	Then we can conclude the proof of Proposition \ref{prop_upper_bound}. 

\end{proof}

Next, we extend the conclusions in Proposition \ref{prop_upper_bound} from the matrix $\mathcal{X}_0$ to $\mathcal{X}$. We have the following proposition. 

\begin{pro}\label{prop_upper_bound_spike}  With high probability, $\mathcal{X}-z$ is invertible uniformly in all $|z|\geq\sqrt{\rho(\mathcal{V})}+\delta'$ for some $\delta'>0$.  
\end{pro}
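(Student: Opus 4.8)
The plan is to linearize, reduce via a Sylvester/Woodbury argument to the invertibility of a fixed-size matrix, and then control that matrix using Proposition \ref{quadraticformbound} for $|z|$ of moderate size and a Neumann expansion exploiting the independence $X_1\perp X_2$ for $|z|$ large.

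First I would pass to a statement about $\Sigma X_1X_2^*\Sigma^*$. Since $|z|\ge \sqrt{\rho(\mathcal V)}+\delta'>0$, a Schur complement in the block form of $\mathcal X-z$ gives $\det(\mathcal X-z)=\pm z^{\,n-p}\det(\Sigma X_1X_2^*\Sigma^*-z^2)$, so it suffices to prove that $\Sigma X_1X_2^*\Sigma^*-w$ is invertible, uniformly in $|w|\ge \rho(\mathcal V)+\delta$, with high probability (choosing $\delta$ small enough that $(\sqrt{\rho(\mathcal V)}+\delta')^2\ge \rho(\mathcal V)+\delta$; recall $\rho(\mathcal V)=n^{-1}\|T\|_{\text{op}}$). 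Using $\Sigma(\Sigma^*\Sigma)^{-1}\Sigma^*=I$ one has $\Sigma X_1X_2^*\Sigma^*-w=\Sigma\big(X_1X_2^*-w(\Sigma^*\Sigma)^{-1}\big)\Sigma^*$, so it is enough that $X_1X_2^*-w(\Sigma^*\Sigma)^{-1}=(X_1X_2^*-w)+wL\Gamma L^*$ be invertible, where $I-(\Sigma^*\Sigma)^{-1}=L\Gamma L^*$ is the spectral decomposition from (\ref{032401}), $L$ deterministic with orthonormal columns and $\Gamma$ diagonal with $\|\Gamma\|_{\text{op}}\le C$. By Proposition \ref{prop_upper_bound}, $X_1X_2^*-w$ is invertible with $\|G(w)\|_{\text{op}}\le C$ on this range, so by Sylvester's determinant identity the problem reduces to the invertibility of the fixed-size matrix $I+w\Gamma L^*G(w)L$; since $L^*L=I$ and $G(w)=\underline G(w)-w^{-1}$, this matrix equals $(I-\Gamma)+w\Gamma L^*\underline G(w)L$.

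The structural point is that $I-\Gamma$ is diagonal with spectrum in $[\sigma_{\max}^{-2},K^2]$ — its entries are the eigenvalues of $(\Sigma^*\Sigma)^{-1}$ on $\operatorname{ran}L$ — hence invertible with $\|(I-\Gamma)^{-1}\|_{\text{op}}\le \sigma_{\max}^2\lesssim n^{1/2-\varepsilon_0}$. So $(I-\Gamma)+w\Gamma L^*\underline G(w)L$ is invertible as soon as $\|w\Gamma L^*\underline G(w)L\|_{\text{op}}<\sigma_{\max}^{-2}$, and everything comes down to bounding $\|L^*\underline G(w)L\|_{\text{op}}$. For $\rho(\mathcal V)+\delta\le |w|\le n^{\beta}$ with $\beta=\beta(\varepsilon_0)>0$ small, Proposition \ref{quadraticformbound} gives $\|L^*\underline G(w)L\|_{\text{op}}=O_\prec(q_n^{-1})$, hence $\|w\Gamma L^*\underline G(w)L\|_{\text{op}}=O_\prec(n^{\beta}q_n^{-1})\ll \sigma_{\max}^{-2}$ once $\beta$ and the exponent in $q_n$ are small relative to $\varepsilon_0$. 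For $|w|\ge n^{\beta}$ (so $|w|>2\|X_1X_2^*\|_{\text{op}}$ w.h.p.) I would use $\underline G(w)=w^{-1}X_1X_2^*G(w)$ and the Neumann expansion $(I-w^{-1}X_1X_2^*)^{-1}=\sum_{j=0}^{m-1}w^{-j}(X_1X_2^*)^j+(\text{remainder})$, giving $\underline G(w)=-\sum_{j=1}^{m}w^{-(j+1)}(X_1X_2^*)^j+(\text{remainder})$, together with the ancillary bound $\|L^*(X_1X_2^*)^jL\|_{\text{op}}=O_\prec(n^{-1/2})$ for each fixed $j\ge1$: the case $j=1$ is immediate since $L^*X_1X_2^*L$ is the matrix of inner products $\langle X_1^*\ell_i,X_2^*\ell_k\rangle$ of two independent centered vectors of norm $O_\prec(1)$, and for $j\ge 2$ one conditions on $X_1$ and estimates the resulting quadratic form in $X_2$, whose mean is $O(n^{-1})$ and whose fluctuation is $O_\prec(n^{-1/2})$ by a short second-moment computation. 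Taking $m=m(\varepsilon_0,\beta)$ large makes the remainder negligible and yields $\|L^*\underline G(w)L\|_{\text{op}}=O_\prec(|w|^{-2}n^{-1/2})$, so $\|w\Gamma L^*\underline G(w)L\|_{\text{op}}=O_\prec(|w|^{-1}n^{-1/2})\le O_\prec(n^{-\beta-1/2})\ll\sigma_{\max}^{-2}$. Uniformity in $w$ then follows from the $\epsilon$-net argument of the proof of Proposition \ref{prop_upper_bound} (cover $\{\rho(\mathcal V)+\delta\le|w|\le n^{D}\}$ by a polynomial net and transfer by a Neumann expansion using $\|G(w)\|_{\text{op}}\le C$), while for $|w|>n^{D}$ invertibility is trivial since $\|\Sigma X_1X_2^*\Sigma^*\|_{\text{op}}\le \sigma_{\max}^2\|X_1\|_{\text{op}}\|X_2\|_{\text{op}}\prec n^{1/2}$.

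The main obstacle is the large-$|w|$ regime: Proposition \ref{quadraticformbound} alone only gives $\|L^*\underline G(w)L\|_{\text{op}}=O_\prec(q_n^{-1})$, which is not small enough to defeat the factor $\sigma_{\max}^2\lesssim n^{1/2-\varepsilon_0}$ uniformly for $|w|$ up to order $\sigma_{\max}^2$, and the crude operator-norm bound on $\Sigma X_1X_2^*\Sigma^*$ only kicks in for $|w|$ above that order. It is precisely the extra cancellation coming from $X_1\perp X_2$, captured by $\|L^*(X_1X_2^*)^jL\|_{\text{op}}=O_\prec(n^{-1/2})$, that bridges this intermediate range; establishing that estimate (uniformly in the few powers $j$ that occur) is the only genuinely new input beyond Propositions \ref{prop_upper_bound} and \ref{quadraticformbound}.
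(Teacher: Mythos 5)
Your reduction is the same as the paper's: pass via the block Schur complement to $\det(\Sigma X_1X_2^*\Sigma^*-w)$, factor out $\det(\Sigma\Sigma^*)$ and $\det(X_1X_2^*-w)$ by Sylvester/Woodbury, and reduce to showing the fixed-size matrix $(I-\Gamma)+w\Gamma L^*\underline G(w)L$ (equivalently $I+w\Gamma L^*G(w)L$) is nonsingular, with uniformity obtained from an $\epsilon$-net plus a Neumann expansion for very large $|w|$. That part matches the paper.

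Where you diverge is in how you handle the intermediate range of $|w|$. You correctly observe that, \emph{as stated}, Proposition \ref{quadraticformbound} only gives $\|L^*\underline G(w)L\|_{\mathrm{op}}=O_\prec(q_n^{-1})$ with no decay in $|w|$, which after multiplying by $w$ is insufficient to beat $1-\Gamma_{ii}\geq\sigma_{\max}^{-2}$ when $|w|$ is of order $\sigma_{\max}^2\sim n^{1/2-2\varepsilon_0}$; and you patch this by introducing the new input $\|L^*(X_1X_2^*)^jL\|_{\mathrm{op}}=O_\prec(n^{-1/2})$ for each fixed $j$, using independence of $X_1$ and $X_2$, together with a Neumann expansion of $\underline G(w)$ in $w^{-1}$. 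That is a valid and self-contained fix. The paper, however, does not introduce this estimate: its proof of Proposition \ref{quadraticformbound} actually controls $\mathcal P=zq_n\,u^*\underline G(z)v\,\chi_G$, i.e.\ it establishes the \emph{stronger} bound $|u^*\underline G(z)v|\prec(|z|q_n)^{-1}$, and it is this hidden factor $|z|^{-1}$ (here with $z\leftrightarrow w$) that cancels the factor $w$ in $w\Gamma L^*\underline G(w)L$, yielding $O_\prec(q_n^{-1})$ uniformly over the whole range and making the comparison $1-\Gamma_{ii}\geq\sigma_{\max}^{-2}\gg q_n^{-1}$ work without a separate intermediate regime. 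In short: you have found a genuine gap in the \emph{literal} statement of Proposition \ref{quadraticformbound} and supplied a workable alternative ingredient, but the paper bridges the same gap more economically by exploiting the $|z|$-decay already produced by its own proof of that proposition; it would be worth noting in your write-up that either route closes the argument.
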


\begin{proof} We first show the proof for the invertibility for one fixed $z$ which is larger than $\sqrt{\rho(\mathcal{V})}+\delta'$ in magnitude. Later, we will prove the uniformity. 
From Proposition \ref{prop_upper_bound}, we  know 
\begin{equation}\label{02151403}
	\det(\mathcal{X}_0-z)\not =0
\end{equation}
 with high probability. It suffices to show that  $\det(\mathcal{X}-z)\not =0$.  Recall the definition from (\ref{model}) 
\begin{equation*}
	\mathcal{X}=\left(
\begin{array}{ccc}
~ & \Sigma X_1\\
(\Sigma X_2)^* &~
\end{array}
\right)
\end{equation*}
We use the following basic identity of  the determinant of a block matrix
\begin{equation}
	\det \left(
\begin{array}{ccc}
\mathcal{A} & \mathcal{B}\\
\mathcal{C} & \mathcal{D}
\end{array}
\right)=\det (\mathcal{A}-\mathcal{B}\mathcal{D}^{-1}\mathcal{C})\cdot \det(\mathcal{D})
\end{equation}
which holds when $\mathcal{D}$ is invertible. One can then easily check that 
\begin{align}\label{02151445}
	\det (\mathcal{X}-z)&=(-1)^p\det(\Sigma X_1 X_2^*\Sigma^*-z^2)\nonumber\\
	&=(-1)^p\det(\Sigma\Sigma^*)\cdot\det(X_1X_2^*-z^2(\Sigma^*\Sigma)^{-1})
\end{align}
Recall from (\ref{032401}) that $\Gamma$ is a low rank diagonal matrix which  satisfies $\det(I-\Gamma)=1/\det(\Sigma^*\Sigma)$. Since (\ref{02151403}) implies that $\det(X_1X_2^*-z^2)\not =0$, we have
\begin{align}\label{02151446}
	\det(X_1X_2^*-z^2(\Sigma^*\Sigma)^{-1})&=\det(X_1X_2^*-z^2+z^2L\Gamma L^*) \nonumber\\
	&=\det(X_1X_2^*-z^2)\cdot\det(I+z^2\Gamma L^*(X_1X_2^*-z^2)^{-1}L)
\end{align}
By our assumption, $1-\Gamma_{ii}$ is no smaller than $\lambda^{-1}_{max}(\Sigma^*\Sigma)$. Further using the fact that $\lambda_{max}(\Sigma^*\Sigma)\cdot q_n^{-1}=o(1)$, we have that $1-\Gamma_{ii}$ is of order greater than $q_n^{-1}$ for all $i$. Moreover, using the fact that $\Gamma$ is of low rank and diagonal, we have for some fixed constant $k$ 
\begin{align}\label{02151447}
	\det(I+z^2\Gamma L^*(X_1X_2^*-z^2)^{-1}L)&=(1+o(1))\cdot\prod_{i=1}^{k}(1+z^2\Gamma_{ii}c_i^*(X_1X_2-z^2)^{-1}c_i)\nonumber\\
	&=(1+o(1))\cdot\prod_{i=1}^{k}(1+z^2\Gamma_{ii}(-\frac{1}{z^2}+O_{\prec} (q_n^{-1})))\nonumber\\
	&=(1+o(1))\cdot\prod_{i=1}^{k}(1-\Gamma_{ii}+O_{\prec}(q_n^{-1}))
\end{align}
where the first two steps follow from Proposition \ref{quadraticformbound}, and especially in the first step we used  the fact that all the off-diagonal entries are of order $q_n^{-1}\ll 1-\Gamma_{ii}$. Now together with (\ref{02151445}), (\ref{02151446}) and (\ref{02151447}),  $\det(\mathcal{X}-z)\not =0$ holds with high probability, for a fixed $z$. 
Next, we show that the fact $\det(\mathcal{X}-z)\not =0$ holds uniformly in $|z|\geq \sqrt{\rho(\mathcal{V})}+\delta'$, with high probability. It suffices to have  Proposition \ref{quadraticformbound} uniformly in $z$.   Again,  the uniformity of the estimates in Proposition \ref{quadraticformbound} follows simply by applying Neumann expansion. For instance, for sufficiently large $z$, say, $|z|\geq N^C$ for some large constant $C$, one can expand $G(z)$ around $-z^{-1}$, and apply a crude upper bound of the operator norm $X_1X_2^*$, to see  that Proposition \ref{quadraticformbound} is true uniformly in $|z|\geq N^C$, with high probability. For those $\sqrt{\rho(\mathcal{V})}+\delta'\leq |z|\leq N^C$, we can apply a standard $\epsilon$-net argument. We can find an $\epsilon$-net of this domain with cardinality $N^{O(C)}$ such that for each $z$ in this domain, one can find an $z'$ in the $\epsilon$-net, so that $|z-z'|\leq N^{-C}$. By the definition of stochastic dominance in Definition \ref{stochastic dom}, one can readily conclude from Proposition \ref{quadraticformbound} that the estimates therein holds uniformly on the $\epsilon$-net, with high probability. Then for any other $z$ satisfying $\sqrt{\rho(\mathcal{V})}+\delta'\leq |z|\leq N^C$, we can expand $G(z)$ around $G(z')$ using Neumann expansion, where $z'$ is a point in the $\epsilon$-net and $|z-z'|\leq N^{-2C}$. Then by the boundedness of the operator norm $G(z')$ from Proposition \ref{prop_upper_bound}, one can conclude the proof of the uniformity of the estimate in Proposition \ref{quadraticformbound} for all $|z|\geq  \sqrt{\rho(\mathcal{V})}+\delta'$, which holds with high probability. 
\end{proof}

\begin{proof}[Proof of Proposition \ref{pro.spectralradius}] With Propositions \ref{prop_upper_bound} and \ref{prop_upper_bound_spike}, we can conclude the proof of Proposition \ref{pro.spectralradius}.
\end{proof}

\section{A Priori bound for Green function quadratic forms: Proof of Proposition \ref{quadraticformbound}}\label{s.proof of quadratic form bound}
\begin{proof}[Proof of Proposition \ref{quadraticformbound}] For brevity, in the sequel, we show the proof of the first  bound in (\ref{011510}). The other terms can be bounded in a similar manner and thus we omit the details. 
Recall the definition of $q_n$ from (\ref{def of q}). We denote by 
\begin{align*}
\mathcal{P}:=zq_n u^* \underline{G}v \chi_G=q_n u^* X_1X_2^* Gv \chi_G,
\end{align*}
where $\chi_G$ is defined in (\ref{011502}). We also refer to the discussion in (\ref{011502})-(\ref{010722}) regarding the $\chi_G$ factor. Therefore, in the sequel, we may simply regard $\chi_G$ as $1$ and neglect all the terms involving its derivatives. 

Notice that from the definition in (\ref{def of G under}) we have the trivial bound 
\begin{align}
|\mathcal{P}|=|q_n u^*v+zq_n u^* Gv|\chi_G\prec q_n \label{122001}
\end{align}
and also the deterministic bound $|\mathcal{P}|\leq CKn$ for some constant $C>0$; see (\ref{101501}).

We aim for a recursive moment estimate for $\mathbb{E} (\mathcal{P})^{2k}$.  By cumulant expansion formula \cite[Lemma 1.3]{HK17}, we have 
\begin{align}
\mathbb{E} (\mathcal{P})^{2k} &=q_n\sum_{ij} \mathbb{E}\Big[x_{1, ij} (X_2^*  Gvu^*)_{ji} \chi_G(\mathcal{P})^{2k-1} \Big]\notag\\
&=q_n \sum_{\alpha=1}^m \sum_{ij} \frac{\kappa_{\alpha+1}(x_{1,ij})}{\alpha!} \mathbb{E} \partial_{1,ij}^{\alpha} \Big[(X_2^*  Gvu^* )_{ji}\chi_G (\mathcal{P})^{2k-1} \Big]+\mathcal{R}, \label{103101}
\end{align}
where we again use $\mathcal{R}$ to denote the remainder term, with certain abuse of notation. Specifically,  here $\mathcal{R}$ is given by 
\begin{align}
|\mathcal{R}|&\leq C\mathbb{E} |x_{1,ij}|^{m+2} \mathbb{E} \Big(\sup_{|x_{1,ij}|\leq c}\Big| \partial^{m+1}_{1,ij} \Big[ (X_2^*  Gvu^* )_{ji} \chi_G(\mathcal{P})^{2k-1} \Big]\Big|\Big)\notag\\
&+C\mathbb{E} \Big(|x_{1,ij}|^{m+2}\mathbf{1}(|x_{1,ij}|>c) \Big)\mathbb{E} \Big(\sup_{x_{1,ij}\in \mathbb{R}}\Big| \partial^{m+1}_{1,ij} \Big[ (X_2^*  Gvu^*)_{ji} \chi_G(\mathcal{P})^{2k-1} \Big]\Big|\Big). \label{042502}
\end{align}
As the derivative can only generate matrix entries which are $O_\prec(1)$, and there are totally $2k-1$ $q_n$-factors from $\mathcal{P}^{2k-1}$, we can trivially bound  
\begin{align*}
\Big|\partial^{m+1}_{1,ij} \Big[ (X_2^*  Gvu^* )_{ji} \chi_G (\mathcal{P}^{2k-1})\Big|\prec q_n^{ 2k-1}.
\end{align*}
In order to bound the remainder terms in the cumulant expansion, we also need to have the above bound when we take supremum over one $X$ entry. This technical problem can be handled by simply truncating the matrix entries at $n^{-1/2+\varepsilon}$ at the beginning. We omit the details.   
Further notice that $\mathbb{E}|x_{1,ij}|^{m+2}\sim n^{-\frac{m+2}{2}}.$ 
Hence, it is easy to show that when $m$ is sufficiently large, 
\begin{align}
|\mathcal{R}|\prec n^{-C}  \label{remainderterm}
\end{align}
for a constant $C=C(m, \varepsilon)>0$.  From the first $m$ terms in (\ref{103101}), we start from $\alpha=1$. After an elementary calculation, we arrive at 
\begin{align*}
&q_n  \sum_{ij} \kappa_{2}(x_{1,ij}) \mathbb{E} \partial_{1,ij}^{1} \Big[(X_2^*  Gvu^* )_{ji} \chi_G(\mathcal{P})^{2k-1} \Big]\notag\\
&=\frac{q_n}{n} \mathbb{E}\Big[ v^*G^* X_2X_2^*Gu \chi_G \cdot \mathcal{P}^{2k-1} \Big]-z\frac{q_n^2}{n} \mathbb{E}\Big[v^*G^* X_2X_2^* G v\cdot u^* G^* u \chi_G\cdot \mathcal{P}^{2k-2} \Big]\notag\\
&=\frac{q_n}{n} \mathbb{E}\Big[ O_\prec(1) \mathcal{P}^{2k-1} \Big]+\frac{q_n^2}{n} \mathbb{E}\Big[O_\prec(1) \mathcal{P}^{2k-2} \Big]
\end{align*}
For $\alpha\geq 0$, we first notice that each term in $\partial_{1,ij}^{\alpha} (X_2^*  Gvu^* )_{ji}$ must contain the factor 
\begin{align*}
(X_2^*  Gvu^* )_{ji}
\end{align*}
and the other factors can be simply bounded by $O_\prec(1)$. Further, we notice that each term in $\partial_{1,ij}^{\alpha} \mathcal{P}$ must contain  the factor
\begin{align*}
u^* G e_i\cdot e_j^* X_2^* G v
\end{align*}
and the other factors can be simply bounded by $O_\prec(1)$. Apart from the $q_n$ factors, the $ij$ sum of all the derivatives can be simply bounded by 
\begin{align*}
&\sum_{ij} |e_j^*(X_2^*  Gvu^* )e_i| |e_j^* X_2^*  G v| |u^* Ge_i | \\
&\leq \sum_{i}  |u^* Ge_i | \sqrt{\sum_j |e_j^*(X_2^*  Gvu^* )e_i|^2}\sqrt{\sum_j|e_j^* X_2^*  G v|^2}\notag \\
&\leq \sum_i |u^* Ge_i||u^* e_i| \sqrt{v^* G^* X_2X_2^* G v} \sqrt{v^* G^* X_2X_2^* G v} \\
&\prec  \sum_i |u^* Ge_i||u^*  e_i|\prec 1,
\end{align*}
where the last step follows from Cauchy Schwarz inequality. Then, if we further consider the $q_n$ factors generated from the derivative of $\mathcal{P}$, the most dangerous terms from $\partial_{1,ij}^{\alpha} \mathcal{P}^{2k-1}$ would be 
\begin{align*}
(\partial_{1,ij}^1 \mathcal{P})^{\alpha} \mathcal{P}^{2k-1-\alpha}
\end{align*}
which creates a factor $q_n^{\alpha}$. The contribution of such term to (\ref{103101}) would be 
\begin{align*}
\Big(\frac{q_n}{\sqrt{n}}\Big)^{\alpha+1} \mathcal{P}^{2k-1-\alpha}.
\end{align*}
Putting all these estimates together, we arrive at the recursive moment estimate
\begin{align*}
\mathbb{E} |\mathcal{P}|^{2k}= \sum_{\alpha=1}^{2k} \Big(\frac{q_n}{\sqrt{n}}\Big)^{\alpha} \mathbb{E}\Big[O_\prec (1) \mathcal{P}^{2k-\alpha}\Big]+O(n^{-C}). 
\end{align*}
Applying a Young's inequality, we can immediately get 
\begin{align*}
\mathbb{E} |\mathcal{P}|^{2k}=o(1). 
\end{align*}
Due to arbitrariness of $k$, we obtain the first estimate in (\ref{011510}). The other terms can be estimated similarly. Hence, we conclude the proof of Proposition \ref{quadraticformbound}.
\end{proof}

\section{Proofs of Lemmas \ref{lem.fluctuation} and \ref{lem.expectation}} \label{app.lemmas}

\begin{proof}[Proof of Lemma \ref{lem.fluctuation}]
Similarly to the discussion in Section \ref{section. proof of CLT}, for brevity, we state the proof for a single $L_k$ in the sequel. The joint distribution of all $L_i$'s can be done similarly. 
First we write
\begin{align*}
	g(t) = \exp( \mathrm{i} t L_{k} ) 
	= \exp\Big( -\frac{t}{\pi} \oint_{\Gamma} \omega^{k+1} (\text{Tr}\,\underbar{G}(\omega^{2})-\E[\text{Tr}\,\underbar{G}(\omega^{2})]) \mathrm{d}\omega \Big).
\end{align*}
Consider the derivative
\begin{align}
	g'(t) = -\frac{1}{\pi} \oint_{\Gamma} \omega^{k+1} (\text{Tr}\,\underbar{G}(\omega^{2})-\E[\text{Tr}\,\underbar{G}(\omega^{2})]) f(t) \mathrm{d}\omega.  \label{071016}
	\end{align}
We aim to show
\begin{equation*}
	\E[g'(t)] = -t\sigma^{2}\E[g(t)] + o(1), \quad \text{for some $\sigma>0$.}
\end{equation*}
We will need the following lemma as a technical input. 
\begin{lem}\label{l.2 point function} Let $|w_1|, |w_2|\geq  (p/n)^{1/2} + \delta/2$. We have 
\begin{align*}
	&\mathbb{E} \Tr\big((G(w_1)^\top G(w_2)\big)= p\left(w_1w_2 - \frac{p^2}{n^2}\right)^{-1} + O_{\prec}(nq_n^{-1}),\notag\\
	&\mathbb{E}\big(\Tr(G(w_1)^\top G(w_2)) f(t)\big)=p\left(w_1w_2 - \frac{p^2}{n^2}\right)^{-1}\mathbb{E} f(t)+ O_{\prec}(nq_n^{-1}). 
\end{align*}
\end{lem}
 With the above lemma, we proceed with the estimate of (\ref{071016}). 
First, applying the cumulant expansion w.r.t. $X_1$ variables, we can get 
\begin{align}
	\mathbb{E}\Tr \underline{G}(w^2) &= \frac{1}{w^2}\mathbb{E}\Tr X_1X_2^*G(w^2) \notag \\
	&= \frac{1}{nw^2}\sum_{a,b}\mathbb{E} \partial_{1,ab} (X_2^*G)_{ba} + \frac{1}{w^2}\sum_{a, b}\sum_{\alpha=2}^m\frac{\kappa_{\alpha+1}(x_{1, ab})}{\alpha!}\mathbb{E}\partial_{1, ab}^{\alpha} (X_2^*G)_{ba}+\mathcal{R} \notag \\
	&= -\frac{1}{nw^2}\sum_{a,b}\mathbb{E} (X_2^*G)_{ba}(X_2^*G)_{ba} + O_{\prec}(\sqrt{n}q_n^{-3}) \label{070710}
\end{align}
Further applying the cumulant expansion w.r.t. $X_2$ entries, and 
applying Lemma \ref{l.2 point function} with $w_1=w_2$, we will get 
\begin{align}
		\mathbb{E}\Tr \underline{G}(w^2) &= -\frac{p}{n^2w^2}\sum_{a,k}\mathbb{E} (G_{ka}G_{ka}) + O_{\prec}(\sqrt{n}q_n^{-3})= -\frac{p^2}{n^2w^2}\left(w^4 - \frac{p^2}{n^2}\right)^{-1} + O_{\prec}(q_n^{-1}).  \label{070711}
\end{align}
Here we omitted the estimate of the error terms as it is similar to the discussion in Section \ref{section. proof of CLT}. 

Similarly, we have 
\begin{align}
	&\mathbb{E} \big( \Tr \underline{G}(w^2) f(t) \big) = \frac{1}{w^2} \mathbb{E} \big(\Tr(X_1X_2^*G(w^2))f(t) \big) \notag \\
	&= \frac{1}{nw^2} \sum_{a,b}\mathbb{E} \big(\partial_{1, ab} (X_2^*G)_{ba}f(t) \big) + \frac{1}{w^2}\sum_{a, b}\sum_{\alpha=2}^m\frac{\kappa_{\alpha+1}(x_{1, ab})}{\alpha!}\mathbb{E}\big(\partial_{1, ab}^{\alpha} (X_2^*G)_{ba}f(t) \big)+\mathcal{R} \notag \\
	&= -\frac{1}{nw^2}\sum_{a,b}\mathbb{E} \big((X_2^*G)_{ba}(X_2^*G)_{ba}f(t) \big) \notag \\
	& \qquad - \frac{t}{2n\pi w^2} \sum_{a,b}\mathbb{E}\big( (X_2^*G_1)_{ba} f(t)\oint_{\Gamma_2} w_2^{k+1}\partial_{1, ab} (1-\mathbb{E})\Tr(\underline{G_2}) \mathrm{d}w_2 \big) + O_{\prec}(\sqrt{n}q_n^{-3}). \label{071013}
\end{align}
For the first term in the RHS above, similarly to (\ref{070710}) and (\ref{070711}), we can get via performing cumulant expansion w.r.t. $X_2$ variables
\begin{align}
-\frac{1}{nw^2}\sum_{a,b}\mathbb{E} \big((X_2^*G)_{ba}(X_2^*G)_{ba}f(t) \big)=  -\frac{p^2}{n^2w^2}\left(w^4 - \frac{p^2}{n^2}\right)^{-1}\mathbb{E} f(t) + O_{\prec}(\sqrt{n}q_n^{-3}). \label{071014}
\end{align}

Plugging (\ref{070711}), (\ref{071013}) and (\ref{071014}) into (\ref{071016}), we obtain 
\begin{align*}
	&\mathbb{E} g'(t) = \frac{t}{4n\pi^2}\oint_{\Gamma_1}\oint_{\Gamma_2}w_1^{k-1}w_2^{k-1} \sum_{a,b} \mathbb{E}\big( (X_2^*G_1)_{ba} f(t) \partial_{1, ab}\Tr(X_1X_2^*G_2) \big)\mathrm{d}w_2 + O_{\prec}(\sqrt{n}q_n^{-3}) \notag \\
	&= \frac{t}{4n\pi^2}\oint_{\Gamma_1}\oint_{\Gamma_2}w_1^{k-1}w_2^{k-1}\sum_{a,b}\mathbb{E}\Big((X_2^*G_1)_{ba} \big((X_2^*G_2)_{ba} - w_2^2(X_2^*G_2\underline{G_2})_{ba}\big)f(t)    \Big) \mathrm{d}w_2 + O_{\prec}(\sqrt{n}q_n^{-3}) 
\end{align*}
By a further cumulant expansion w.r.t. $X_2$-variables, we can arrive at
\begin{align*}
	&\sum_{a,b}\mathbb{E}\Big((X_2^*G_1)_{ba}\big((X_2^*G_2)_{ba} - w_2^2(X_2^*G_2\underline{G_2})_{ba}\big) f(t) \Big) \notag \\
	&\qquad = \frac{p}{n} \mathbb{E}\big(\Tr(G_1^\top G_2) f(t)\big) + O_{\prec}(nq_n^{-1})
\end{align*}
Again, by Lemma \ref{l.2 point function}, we have 
\begin{align*}
	\mathbb{E} g'(t) &= \frac{pt}{4n^2\pi^2}\oint_{\Gamma_1}\oint_{\Gamma_2}w_1^{k-1}w_2^{k-1}\mathbb{E}\Big(\Tr(G_1^\top G_2) f(t)\Big)\mathrm{d}w_1\mathrm{d}w_2 + O_{\prec}(q_n^{-1}) \notag \\
	&= \frac{p^2t}{4n^2\pi^2}\oint_{\Gamma_1}\oint_{\Gamma_2}w_1^{k-1}w_2^{k-1}\left(w_1^2w_2^2 - \frac{p^2}{n^2}\right)^{-1}\mathrm{d}w_1\mathrm{d}w_2 \mathbb{E}f(t) + O_{\prec}(q_n^{-1}).
\end{align*}
Since we are in the regime $|w_1^2|, |w_2^2| >  p/n$, we can write  
\begin{align*}
	\mathbb{E} g'(t) = \frac{t}{4\pi^2}\sum_{\ell =1}^{\infty} \left(\frac{p}{n}\right)^{2\ell}\oint_{\Gamma_1}\oint_{\Gamma_2}w_1^{k-1-2\ell}w_2^{k-1-2\ell}
	  \mathrm{d}w_1\mathrm{d}w_2 \mathbb{E}f(t) + O_{\prec}(q_n^{-1}).
\end{align*}
Now, consider 
\begin{align*}
	\oint_{\Gamma_1}\oint_{\Gamma_2}w_1^{k-1-2\ell}w_1^{k-1-2\ell}
	  \mathrm{d}w_1\mathrm{d}w_2 = 
	  \begin{cases}
	  	2\pi i \text{Res}(1 , 0) (\oint_{\Gamma_1}w_1^{k-1-2\ell}\mathrm{d}w_1) = -4\pi^2 , &\text{when } 2\ell = k, \\
	  	0, &\text{otherwise }
	  \end{cases}
\end{align*}
Thus, 
\begin{align*}
	\mathbb{E} g'(t) = \begin{cases}
		-t\left(\frac{p}{n}\right)^{k}\mathbb{E}g(t) + O_{\prec}(q_n^{-1}), &\text{if } k \text{ is even}, \\
		O_{\prec}(q_n^{-1}),  &\text{if } k \text{ is odd }.
	\end{cases}
\end{align*}
This implies that the cumulants $\{\kappa_{j}\}$ of $L_{k}$ satisfy
$\kappa_{2}(L_{k})\sim(p/n)^{k/2}$ for even $k$ while
$\kappa_{m}(L_{k})=o(1)$ for every $m\ge3$ (and for all $m\ge2$ when $k$ is odd).
Moreover, mixed cumulants with distinct orders are negligible,
$\kappa_{m_{1},\dots,m_{r}}(L_{k_{1}},\dots,L_{k_{r}})=o(1)$ whenever
$k_{1},\dots,k_{r}$ are not all equal and even.
Hence the method of cumulants (or characteristic functions) yields that
\eqref{eq: 981} holds for a sequence $\{Z_{k}\}$ of independent random variables
as in Proposition \ref{prop: 622}.

\end{proof}

Next, we prove Lemma \ref{l.2 point function}. 
\begin{proof}[Proof of Lemma \ref{l.2 point function}] For brevity, we denote by $G(w_a)=G_a$ for $a=1,2$ in this proof.  First, we rewrite
\begin{align*}
	\mathbb{E} \Tr(G_1^\top G_2) = \sum_{ij}\mathbb{E} \left(G_{1, j i} G_{2, j i}\right)  = \frac{1}{w_{2}} \sum_{ij}\mathbb{E} \big(G_{1, j i}\left(X_{1} X_{2}^{*} G_{2}\right)_{ji}\big)+\frac{p}{w_{1}w_{2}} + O_{\prec}(n q_n^{-1}). 
\end{align*}
By cumulant expansion, we have
\begin{align*}
	& \sum_{ij}\mathbb{E} \big(G_{1, j i}\left(X_{1} X_{2}^{*} G_{2}\right)_{j i}\big)=\mathbb{E} \sum_{ijb} \big(X_{1, jb} \left(X_{2}^{*} G_{2} \right)_{bi}G_{1, j i}\big) \notag \\
	&=\frac{1}{n} \mathbb{E} \sum_{ijb}  \partial_{1, j b}\big(\left(X_{2}^{*} G_{2}\right)_{b i} G_{1, ji}\big) + \sum_{ijb}\sum_{\alpha=2}^m \frac{\kappa_{\alpha+1}(x_{1, jb})}{\alpha!}\mathbb{E} \partial_{1,jb}^{\alpha}\big(\left(X_{2}^{*} G_{2}\right)_{b i} G_{1, ji}\big) + \mathcal{R} \notag \\
&= -\frac{1}{n} \mathbb{E} \sum_{ijb}  \big((X_2^*G_2)_{bj}(X_2^*G_2)_{bi}G_{1, ji} + (X_2^*G_2)_{bi}G_{1, jj}(X_2^*G_1)_{bi}\big) + O_{\prec}(n^{3/2}q_n^{-3}) \notag \\
&= \frac{1}{n w_1}\mathbb{E} \sum_{ib}  \big((X_2^*G_2)_{bi}(X_2^*G_2)_{bi}\big) + \frac{p}{n w_1}\mathbb{E} \sum_{ib} \big((X_2^*G_2)_{bi}(X_2^*G_1)_{bi}\big) + O_{\prec}(n^{3/2}q_n^{-3}),
\end{align*}
where the error terms can be estimated similarly to the discussion in Section \ref{section. proof of CLT}. Next, we can further perform a cumulant expansion w.r.t. $X_2$ entries, and obtain
\begin{align*}
&\sum_{ij}\mathbb{E} \big(G_{1, j i}\left(X_{1} X_{2}^{*} G_{2}\right)_{j i}\big) 
= \frac{1}{n^2 w_1}\mathbb{E} \sum_{ibk}  \big(G_{2, ki}G_{2, ki} - (G_2X_1)_{kb}G_{2, ki}(X_2^*G_2)_{bi} - w_2 \underline{\mathcal{G}}_{2, bb} G_{2, ki} G_{2, ki}\big) \notag \\
& \qquad + \frac{p}{n^2 w_1}\mathbb{E} \sum_{ibk} \big(G_{1, ki}G_{2, ki} - (G_1X_1)_{kb} G_{1,ki}(X_2^*G_2)_{bi} - w_2 \underline{\mathcal{G}}_{2, bb} G_{1, ki}G_{2, ki}\big) + O_{\prec}(n^{3/2}q_n^{-3}) \notag \\
&= \frac{p}{n^2 w_1}\mathbb{E} \sum_{i}\big((G_2^\top G_2)_{ii} - w_2 q_n^{-1} (G_2^\top G_2)_{ii} \big) +  \frac{p^2}{n^2 w_1}\mathbb{E}  \sum_{i}\big((G_1^\top G_2)_{ii} - w_2 q_n^{-1} (G_1^\top G_2)_{ii}\big) + O_{\prec}(n^{3/2}q_n^{-3}) \notag \\
&= \frac{p^2}{n^2 w_1}\mathbb{E}\Tr(G_1^\top G_2) + O_{\prec}(n q_n^{-1}).
\end{align*}
This implies
\begin{align*}
	\mathbb{E} \Tr(G_1^\top G_2) = \frac{p^2}{n^2 w_1w_{2}}\mathbb{E} \Tr(G_1^\top G_2) +\frac{p}{w_{1} w_{2}}  + O_{\prec}(nq_n^{-1}).
\end{align*}

Hence, we concluded the proof of the first equation in Lemma \ref{l.2 point function}. The second statement can be proved similarly. The only additional complication is that when one performs the cumulant expansion, the derivatives will also hit the $f(t)$ factor. It is easy to show that all terms involving the derivatives are negligible, and thus we omitted the details. 

\end{proof}
 
\begin{proof}[Proof of Lemma \ref{lem.expectation}]

Using (\ref{070711}), one can derive that
\begin{align*}
	\frac{1}{2 \pi i} \oint_{\Gamma}w^{k+1}\mathbb{E}\Tr \underline{G}(w^2) \mathrm{d}w &= - \frac{p^2}{2 \pi i n^2} \oint_{\Gamma}w^{k-1} \left(w^4 - \frac{p^2}{n^2}\right)^{-1} \mathrm{d}w + O_{\prec}(q_n^{-1}) \notag \\
	&= - \frac{1}{2 \pi i} \sum_{\ell=1}^{\infty}\left(\frac{p}{n}\right)^{2\ell}\oint_{\Gamma}w^{k-1-4\ell} \mathrm{d}w + O_{\prec}(q_n^{-1}) \notag \\
	&= \begin{cases}
		-\left(\frac{p}{n}\right)^{k/2}, &\text{if } k \text{ is a multiple of }4, \\
		+ O_{\prec}(q_n^{-1}) , &\text{otherwise }.
	\end{cases}
\end{align*}
%
%

\end{proof}

\section{Proof of Theorem \ref{thm: out iid}} \label{sec: 2425}

By singular value decomposition, one may write
\begin{equation}\label{eq: 1805}
	C = \sum_{i=1}^{k} s_{i} u_{i}v_{i}^{*},
\end{equation}
where $\{s_{i}\}_{i=1}^{k}$ is the set of nonzero singular values of $X$, and $\{u_{i}\}_{i=1}^{k},\{v_{i}\}_{i=1}^{k}\subseteq\mathbb{C}^{n}$ are the sets of orthogonal unit vectors.
Note that
\begin{equation*}
	\max_{1\le i\le m}|s_{i}|\le s^{*}.
\end{equation*}
Define the $n\times k$ matrix $Q$ by taking its \(j\)-th column to be \(u_{j}\) for \(1\le j\le k\).
Likewise, define $k\times n$ matrix $R$ so that its \(j\)-th row equals \(s_{j}v_{j}^{*}\).
With these choices we have the factorization
\[
C \;=\; QR.
\]

As recalled from \eqref{eq: 989}, for any $z$ lying outside the spectrum of $X$, the number $z$ is an eigenvalue of $X+C$ if and only if
\begin{equation}\label{eq: 1821}
	\det(1+R(X-z)^{-1}Q) = 0.
\end{equation}

\begin{theorem}[{\cite[Theorem 1.1]{BCG22}}]
	The spectral radius of $X$ converges to $1$ in probability.
\end{theorem}
By this spectral radius result, with probability tending to $1$, the spectrum of $X$ is contained in the disk $\{z\in\mathbb{C}:|z|\le 1+\delta/3\}$.
Following the approach in Section \ref{sec: 997}, we first define two functions $\tilde{f}(z)$ and $\tilde{f}^{(M)}(z)$ (for $M>0$) by setting
\begin{equation*}
	\tilde{f}(z) = \det(1+R(X-z)^{-1}Q) \quad\text{and}\quad
	\tilde{f}^{(M)}(z) = \det(1+R(X^{(M)}-z)^{-1}Q),
\end{equation*}
where
\[
X^{(M)}=\bigl(x_{ij}^{(M)}\bigr),\quad
x_{ij}^{(M)}
    \;=\;
    x_{ij}\,\mathbf 1(|\sqrt n\,x_{ij}|\le M)
    \;-\;
    \mathbb E[x_{ij}\,\mathbf 1(|\sqrt n\,x_{ij}|\le M)].
\]
Because the entries of $X^{(M)}$ are bounded, relying on the argument used to prove \cite[Theorem 1.7]{Tao} of the light-tail regime, one can find that, 
for every point $c_{i}$ ($i=1,\cdots,\tilde{k}$),
there exists a unique zero of $\tilde{f}^{(M)}(z)$ outside the disk
$\{z\in\mathbb{C}: |z|< 1 + \delta/2\}$ such that
this zero is located arbitrarily close to $c_{i}$ if $n$ and $M$ are large enough.
Thus, by Rouch\'{e}'s theorem with the eigenvalue criterion \eqref{eq: 1821},
it is enough to show that, for sufficiently large \(n\) and \(M\),
\begin{equation}\label{eq: 1870}
\sup_{|z|\ge 1+\delta/2}\bigl|\tilde{f}(z)-\tilde{f}^{(M)}(z)\bigr|
    \;=\; o(1),
\end{equation}
with probability $1-o(1)$.
Recalling \eqref{eq: 1061}, we can rewrite
\begin{equation*}
	\tilde{f}(z) = \frac{\det(X+C-z)}{\det(X-z)} \quad\text{and}\quad
	\tilde{f}^{(M)}(z) = \frac{\det(X^{(M)}+C-z)}{\det(X^{(M)}-z)}.
\end{equation*}
Then \eqref{eq: 1870} follows directly from the next proposition.

\begin{pro}\label{062702}
	Let $\eps>0$ be any (small) constant. \\
\noindent (i) The following holds for any constant $M>0$: there exists $c_{0}>0$ such that
	\begin{equation*}
		\mathbb{P}\Big\{ \min\Big(\inf_{|z|\ge 1+\delta/2}|z^{-n}\det(X-z)|,\inf_{|z|\ge 1+\delta/2}|z^{-n}\det(X^{(M)}-z)|\Big) < c_{0} \Big\} < \eps,
	\end{equation*}
	for $n$ large enough.
	
\noindent (ii) There exists $C_{0}>0$ such that for every $n\ge 1$,
	\begin{equation*}
		\mathbb{P}\Big\{ \sup_{|z|\ge 1+\delta/2}|z^{-n}\det(X+C-z)| > C_{0}  \Big\} < \eps.
	\end{equation*}
\noindent (iii) Let $c>0$ be any (small) constant. For sufficiently large $M$, we have for every $n\ge 1$,
	\begin{equation*}
		\mathbb{P}\Big\{ \sup_{|z|\ge 1+\delta/2}|z^{-n}\det(X-z)-z^{-n}\det(X^{(M)}-z)| > c  \Big\} < \eps.
	\end{equation*}
	Similarly, if $M$ is large enough, then, for every $n\ge 1$,
	\begin{equation*}
		\mathbb{P}\Big\{ \sup_{|z|\ge 1+\delta/2}|z^{-n}\det(X+C-z)-z^{-n}\det(X^{(M)}+C-z)| > c  \Big\} < \eps.
	\end{equation*}
\end{pro}

\begin{proof}

(i)
By \cite[Theorem 1.2]{BCG22}, it follows that
\begin{equation}\label{eq: 297}
	\inf_{|z|\ge 1+\delta/2}\det(1-z^{-1}X) \underset{n\to\infty}{\overset{\text{law}}{\longrightarrow}} \inf_{|z|\ge 1+\delta/2}\sqrt{1-z^{-2}} \exp\Big(-\sum_{k=1}^{\infty}k^{-1/2}Y_{k}z^{-k}\Big),
\end{equation}
where $\{Y_{k}\}$ are independent Gaussian random variables such that
\begin{equation*}
	\E Y_{k}=0 \quad\text{and}\quad \E |Y_{k}|^{2}=1.
\end{equation*}
The remaining steps coincide with the proof of Proposition \ref{prop: 576} (i) and are therefore omitted.

~

(ii)
Recalling \eqref{eq: 1588} and \eqref{eq: 1592}, we likewise deduce that
\begin{equation}\label{eq: 1926}
	\det(1-z^{-1}X-z^{-1}C) = \sum_{m=0}^{n}\sum_{\substack{0\le\alpha+\beta\le k\\ 0\le\alpha\le m}} (-1)^{\alpha} (-z)^{-(m+\alpha+\beta)} P^{(n)}_{m,\alpha,\beta},
\end{equation}
where
\begin{multline}\label{eq: 1930}
	P^{(n)}_{m,\alpha,\beta} = \sum_{|I|=m} \sum_{\substack{\{i_{1},\cdots,i_{\alpha}\}\subseteq I\\ \{j_{1},\cdots,j_{\alpha},\ell_{1},\cdots,\ell_{\beta}\}\subseteq I^{c} }}  \det([X]_{I,(I\backslash\{i_{1},\cdots,i_{\alpha}\})\cup\{j_{1},\cdots,j_{\alpha}\}}) \\
	\times \det([C]_{\{j_{1},\cdots,j_{\alpha},\ell_{1},\cdots,\ell_{\beta}\},\{i_{1},\cdots,i_{\alpha},\ell_{1},\cdots,\ell_{\beta}\}}).
\end{multline}
Then, as in \eqref{eq: 1615}, it follows that
\begin{multline}\label{eq: 1935}
	\E[|P^{(n)}_{m,\alpha,\beta}|^{2}] \\
	= \sum_{|I|=m} \sum_{\substack{\{i_{1},\cdots,i_{\alpha}\}\subseteq I\\ \{j_{1},\cdots,j_{\alpha}\}\subseteq I^{c} }}
	\sum_{\{\ell_{1},\cdots,\ell_{\beta}\},\{\ell'_{1},\cdots,\ell'_{\beta}\}\subseteq I^{c}\backslash\{j_{1},\cdots,j_{\alpha}\}}
	\E[|\det([X]_{I,(I\backslash\{i_{1},\cdots,i_{\alpha}\})\cup\{j_{1},\cdots,j_{\alpha}\}})|^{2}] \\
	\times \det([C]_{\{j_{1},\cdots,j_{\alpha},\ell_{1},\cdots,\ell_{\beta}\},\{i_{1},\cdots,i_{\alpha},\ell_{1},\cdots,\ell_{\beta}\}})
	\times \overline{\det([C]_{\{j_{1},\cdots,j_{\alpha},\ell'_{1},\cdots,\ell'_{\beta}\},\{i_{1},\cdots,i_{\alpha},\ell'_{1},\cdots,\ell'_{\beta}\}})}.
\end{multline}
Recalling the singular value decomposition \eqref{eq: 1805} of perturbation $C$ together with the assumption that $|s_{i}|\le s^{*}$ for all $i$, we find that
\begin{multline*}
	\sum_{\substack{\{i_{1},\cdots,i_{\alpha}\}\subseteq I\\ \{j_{1},\cdots,j_{\alpha}\}\subseteq I^{c} }}
	\sum_{\{\ell_{1},\cdots,\ell_{\beta}\},\{\ell'_{1},\cdots,\ell'_{\beta}\}\subseteq I^{c}\backslash\{j_{1},\cdots,j_{\alpha}\}} |\det([D]_{\{j_{1},\cdots,j_{\alpha},\ell_{1},\cdots,\ell_{\beta}\},\{i_{1},\cdots,i_{\alpha},\ell_{1},\cdots,\ell_{\beta}\}})| \\
	\times |\det([D]_{\{j_{1},\cdots,j_{\alpha},\ell'_{1},\cdots,\ell'_{\beta}\},\{i_{1},\cdots,i_{\alpha},\ell'_{1},\cdots,\ell'_{\beta}\}})|
	\le  2 (s^{*})^{2(\alpha+\beta)} k^{2(\alpha+\beta)}[(\alpha+\beta)!]^{2},
\end{multline*}
where the estimate follows by the same reasoning used in the proof of Proposition \ref{prop: 576} (ii).
When $|I|=m$, for any index sets $\{i_{1},\cdots,i_{\alpha}\}\subseteq I$ and $\{j_{1},\cdots,j_{\alpha}\}\subseteq I^{c}$ with $\alpha\le m$,
we have $\E[|\det([X]_{I,(I\backslash\{i_{1},\cdots,i_{\alpha}\})\cup\{j_{1},\cdots,j_{\alpha}\}})|^{2}]\le n^{-m}m!$.
Since ${n\choose m }\le n^{m}/m!$, we conclude
\begin{equation}\label{eq: 1946}
	\E[|P^{(n)}_{m,\alpha,\beta}|^{2}] \le 2 (s^{*})^{2(\alpha+\beta)} k^{2(\alpha+\beta)}[(\alpha+\beta)!]^{2}\le 2 (s^{*})^{2k} k^{2k}[(k)!]^{2},
\end{equation}
which completes the proof of this part.

~

(iii)
Analogous to \eqref{eq: 1926} and \eqref{eq: 1930}, in the simpler case of $\det(1-z^{-1}X)$,
\begin{equation*}
	\det(1-z^{-1}X) = 1 + \sum_{m=1}^{n} (-1)^{m}z^{-m}P^{(n)}_{m},
\end{equation*}
where
\begin{equation*}
	P_{m}^{(n)} = \sum_{\substack{I\subseteq\{1,\cdots,n\}\\|I|=m}}\det(X(I)), \quad
	X(I) = (x_{ij})_{i,j\in I}.
\end{equation*}
Similarly,
\begin{equation*}
	\det(1-z^{-1}X^{(M)}) = 1 + \sum_{m=1}^{n} (-1)^{m}z^{-m}P^{(n,M)}_{m},
\end{equation*}
where
\begin{equation*}
	P_{m}^{(n,M)} = \sum_{\substack{I\subseteq\{1,\cdots,n\}\\|I|=m}}\det(X^{(M)}(I)), \quad
	X^{(M)}(I) = (x^{(M)}_{ij})_{i,j\in I}.
\end{equation*}
Since we have \eqref{eq: 1946} and the bound $\E[|P^{(n)}_{m}|^{2}]\le 1$ (referring to \cite[Section 4.1]{BCG22}),
following the same argument as in the proof of Proposition \ref{prop: 576} (iii),
we aim to show 
\begin{equation}\label{eq: 1982}
	\lim_{M\to\infty} \E[|P^{(n)}_{m} - P^{(n,M)}_{m}|^{2}] = 0,
\end{equation}
which is enough to prove the first claim of part (iii).
Note that
\begin{align*}
	\E[|P_{m}^{(n)}-P_{m}^{(n,M)}|^{2}]
	&= \sum_{\substack{I\subseteq\{1,\cdots,n\}\\|I|=m}} \E[|\det(X(I))-\det(X^{(M)}(I))|^{2}] \\
	&= n^{-m}{n\choose m}m! \times \E[|\prod_{j=1}^{m}a_{1j}-\prod_{j=1}^{m}a_{1j}^{(M)}|^{2}],
\end{align*}
where $a_{ij}=\sqrt{n}x_{ij}$ and $a^{(M)}_{ij}=\sqrt{n}x^{(M)}_{ij}$.
Hence we establish \eqref{eq: 1982}.

For the case of perturbation, we introduce
\begin{multline*}
	P^{(n,M)}_{m,\alpha,\beta} = \sum_{|I|=m} \sum_{\substack{\{i_{1},\cdots,i_{\alpha}\}\subseteq I\\ \{j_{1},\cdots,j_{\alpha},\ell_{1},\cdots,\ell_{\beta}\}\subseteq I^{c} }}  \det([X^{(M)}]_{I,(I\backslash\{i_{1},\cdots,i_{\alpha}\})\cup\{j_{1},\cdots,j_{\alpha}\}}) \\
	\times \det([C]_{\{j_{1},\cdots,j_{\alpha},\ell_{1},\cdots,\ell_{\beta}\},\{i_{1},\cdots,i_{\alpha},\ell_{1},\cdots,\ell_{\beta}\}}).
\end{multline*}
Similarly, it boils down to showing
\begin{equation}\label{eq: 402}
	\lim_{M\to\infty} \E[|P^{(n)}_{m,\alpha,\beta} - P^{(n,M)}_{m,\alpha,\beta}|^{2}] = 0.
\end{equation}
We see that
\begin{multline*}
	P^{(n)}_{m,\alpha,\beta} - P^{(n,M)}_{m,\alpha,\beta} \\= \sum_{|I|=m} \sum_{\substack{\{i_{1},\cdots,i_{\alpha}\}\subseteq I\\ \{j_{1},\cdots,j_{\alpha},\ell_{1},\cdots,\ell_{\beta}\}\subseteq I^{c} }} 
	\Big(\det([X]_{I,(I\backslash\{i_{1},\cdots,i_{\alpha}\})\cup\{j_{1},\cdots,j_{\alpha}\}}) - \det([X^{(M)}]_{I,(I\backslash\{i_{1},\cdots,i_{\alpha}\})\cup\{j_{1},\cdots,j_{\alpha}\}})\Big) \\
	\times \det([C]_{\{j_{1},\cdots,j_{\alpha},\ell_{1},\cdots,\ell_{\beta}\},\{i_{1},\cdots,i_{\alpha},\ell_{1},\cdots,\ell_{\beta}\}}).
\end{multline*}
Then, by the independence and mean-zero condition of the entries,
\begin{multline*}
	\E[|P^{(n)}_{m,\alpha,\beta} - P^{(n,M)}_{m,\alpha,\beta}|^{2}]\\
	= \sum_{|I|=m} \sum_{\substack{\{i_{1},\cdots,i_{\alpha}\}\subseteq I\\ \{j_{1},\cdots,j_{\alpha}\}\subseteq I^{c} }}
	\sum_{\{\ell_{1},\cdots,\ell_{\beta}\},\{\ell'_{1},\cdots,\ell'_{\beta}\}\subseteq I^{c}\backslash\{j_{1},\cdots,j_{\alpha}\}}\\
	\E[|\det([X]_{I,(I\backslash\{i_{1},\cdots,i_{\alpha}\})\cup\{j_{1},\cdots,j_{\alpha}\}})-\det([X^{(M)}]_{I,(I\backslash\{i_{1},\cdots,i_{\alpha}\})\cup\{j_{1},\cdots,j_{\alpha}\}})|^{2}]\\
	\times \det([C]_{\{j_{1},\cdots,j_{\alpha},\ell_{1},\cdots,\ell_{\beta}\},\{i_{1},\cdots,i_{\alpha},\ell_{1},\cdots,\ell_{\beta}\}})
	\times \overline{\det([C]_{\{j_{1},\cdots,j_{\alpha},\ell'_{1},\cdots,\ell'_{\beta}\},\{i_{1},\cdots,i_{\alpha},\ell'_{1},\cdots,\ell'_{\beta}\}})}.
\end{multline*}
Again, applying the argument from Proposition \ref{prop: 576} (iii) analogously, and invoking the bound \eqref{eq: 1946},
\begin{equation*}
	\E[|P^{(n)}_{m,\alpha,\beta} - P^{(n,M)}_{m,\alpha,\beta}|^{2}]
	\le 2 (s^{*})^{2k} k^{2k}[(k)!]^{2} \times \E[|\prod_{j=1}^{m}a_{1j}-\prod_{j=1}^{m}a_{1j}^{(M)}|^{2}],
\end{equation*}
which indeed implies \eqref{eq: 402}.

\end{proof}

\section{Eigenvector projection}\label{s eigenvector}

Apart from the study of the singular value of $S$, it is also natural to consider the left and right singular vectors of $S$, $u_i$'s and $v_i$'s. In this appendix, we state and prove some results for eigenvector projections for the heteroscedastic case. 

We write the spectral decomposition of  $\mathcal{Y}$ as 
\begin{align}\label{biorthogonal decomp}
\mathcal{Y} =P\Lambda P^{-1} =\sum_{i=\pm 1, \ldots, \pm n\wedge p} \lambda_i \tilde{w}_i \hat{w}_i^*.
\end{align}
Notice that, the left and right eigenvectors $\tilde{w}_i$'s and  $\hat{w}_i$'s are not necessarily orthonormal, but they are biorthogonal, i.e, $\hat{w}_i^*\tilde{w}_j=\delta_{ij}$. Due the non-Hermitian nature, one cannot use  $\tilde{w}_i \hat{w}_i^*$ to estimate the Hermitian one $w_iw_i^*$ for $i= 1, \ldots, \tilde{k}$, where $w_{i}=\frac{1}{\sqrt{2}}\binom{u_i}{ v_i}$. Nevertheless, for any given direction $a$, we can still use $a^* \tilde{w}_i \hat{w}_i^* a$ to estimate $a^* w_i w_i^* a$ rather precisely.  In the sequel, we state such an estimate regarding the projection of the signal onto any given direction. 

 As in the assumptions for Theorem \ref{thm: first order}, here we also allow multiple $d_i$'s. More specifically, suppose that there exist an integer $k_0, 1 \leq k_0 \leq \tilde{k}$, such that $|\{d_1, \ldots, d_k\}| = k_0$. For $i \in \{1, \ldots, k_0\}$, we denote the index set
\begin{align*}
	\mathcal{I}_{i}= \{j \in \{ 1, \ldots  \tilde{k}\}: d_j = d_i\}.
\end{align*}
The eigenspace formed by the signals can be denoted as the projection matrix $P_i = \sum_{j \in \mathcal{I}_{i}}w_j w_j^*$ associated to signal $d_i$. To ensure the identifiability of the eigenspaces formed by distinct $d_i$'s, we will need an additional non-overlapping condition / eigen-gap requirement.   
\begin{assumption}\label{ass: eigen-gap} We assume that Assumption \ref{main assump} holds.  Additionally, we assume that all distinct $d_i$'s satisfy a minimum separation condition 
\begin{align}\label{eigengap}
	\delta_{i} \deq \min_{j \in \mathcal{I}_{i}^c} |d_i - d_j| > \sigma_{\max}^2 n^{-1/2+\delta},
\end{align}
for some $\delta > 0$.
\end{assumption}

\begin{thm}[First order for eigenvector projection--heteroscedastic case]\label{thm: eigenvector}
	Suppose that Assumption \ref{ass: eigen-gap} holds. For $i = 1, \dots, k_0$. Denote the corresponding random projection by $\tilde{P}_i \deq \sum_{j \in \mathcal{I}_{i}}\tilde{u}_j \tilde{v}_j^*$ associated to the outlying eigenvalues $\lambda_i$. For any unit vector $a \in \mathbb{S}^{p+n-1}$, with high probability, we have for any $\epsilon\in(0,\delta)$, 
	\begin{align*}
		|\langle a^*, \tilde{P}_{i}a \rangle - \langle a^*, P_{i}a \rangle| \leq  \delta_{i}^{-1}n^{-\frac12+\epsilon}\sigma_{\max}^2. 
	\end{align*}
\end{thm}

\begin{rem} Similarly to the eigenvalue case, one can also consider the fluctuation of the eigenvector projections, using similar approach used for the proof of Theorem \ref{thm: second order}. Nevertheless, as our main focus in the eigenvalues, we would prefer to leave the detailed discussion of the fluctuation of eigenvector projections and its applications to a future work.
\end{rem}

\begin{rem}The above theorem does not guarantee that $\tilde{P}_i$ is close to $P_i$, however, it tells us that the projection of any unit vector onto the subspaces defined by $\tilde{P}_i$ and $P_i$ are close. Similar study in the case when there is no noise spike and the signals are $\mu$-incoherent can be found in \cite{CWC21}.  In contrast, our result does not involve any structural assumption about $w_i$. Further, our result shows not only the eigenvalue, but also the eigenvector projection estimation are not affected by the spike noise in $\Sigma$ via the asymmetrization approach.   
\end{rem}

\begin{proof}[Proof of Theorem \ref{thm: eigenvector}]
In order to study the leading (left and right) eigenvectors of $\mathcal{Y}$, we start from its Green function. With the notation in  (\ref{biorthogonal decomp}), we can write 
\begin{align*}
	\tilde{G}(z) \deq (\mathcal{Y}-z)^{-1} = \sum_{i= \pm1, \ldots, \pm n \wedge p} \frac{\tilde{w}_i \hat{w}_i^*}{\lambda_i-z}.
\end{align*}
By the Cauchy integral formula, for $i \in \{1, \ldots, k_0 \}$, 
\begin{align}\label{070401}
	\langle a^*, \tilde{P}_{i}a \rangle = -\frac{1}{2\pi \mathrm{i}} \oint_{\Gamma_{i}} a^*\tilde{G}(z)a  ~\mathrm{d}z,
\end{align}
where $\Gamma_{i}$ is a contour enclosing $\lambda_{i}$ but not any other eigenvalues. From the result of Theorem \ref{thm: first order} and the Assumption \ref{ass: eigen-gap} on $d_i$'s, with high probability, we can choose  $\Gamma_i=\{z: |z-d_i|=\delta_i/2\}$.   Since $\tilde{G}(z)$ is the Green function of low rank perturbation of asymmetrized matrix $\mathcal{X}$, by Woodbury matrix identity, we can write 
\begin{align}\label{071201}
	\tilde{G}(z) &= (\mathcal{X} + W\mathfrak{D}W^* -z)^{-1} \notag \\
	&= (\mathcal{X}-z)^{-1} - (\mathcal{X}-z)^{-1}W\Big(\mathfrak{D}^{-1} + W^*(\mathcal{X}-z)^{-1}W\Big)^{-1}W^*(\mathcal{X}-z)^{-1} \notag \\
	&= (\mathcal{X}-z)^{-1} - (\mathcal{X}-z)^{-1}W(\mathfrak{D}^{-1} -\frac{1}{z} + W^*\underline{\mathcal{X}}W)^{-1}W^*(\mathcal{X}-z)^{-1},
\end{align}
where in the last equality, we recall that by Schur complement formula, we have
\begin{align}\label{071203}
(\mathcal{X}-\lambda)^{-1} &= -\frac{1}{\lambda} + \left(
\begin{array}{ccc}
\lambda \underline{G}(\lambda^2) + \lambda \mathcal{E}(\lambda^2) & \Sigma X_1\mathcal{G}(\lambda^2) +  X_1\mathcal{F}(\lambda^2)  \\
 \mathcal{G}(\lambda^2) X_2^*\Sigma^* + \mathcal{F}(\lambda^2)X_2^*  & \lambda \underline{\mathcal{G}}(\lambda^2) + \lambda \mathcal{F}(\lambda^2) 
 \end{array}
 \right) =: -\frac{1}{\lambda} + \underline{\mathcal{X}}.
\end{align}
The matrix $\underline{\mathcal{X}}$ collects all the sub-leading terms. Specifically, for any unit vector $a \in \mathbb{S}^{p+n-1}$, we have
\begin{align}\label{071202}
	a^*\underline{\mathcal{X}}a = O_{\prec}(\sigma_{\max}^2q_n^{-1}),
	\end{align}
which can be easily seen from Proposition \ref{quadraticformbound} and the estimation in (\ref{102903}) -  (\ref{def of E underline}). For notational brevity, we denote $L(z) = (\mathfrak{D}^{-1} -\frac{1}{z})^{-1}$. As in (\ref{021801}), we will use the expansion
\begin{align*}
	(\mathfrak{D}^{-1} -\frac{1}{z} + W^*\underline{\mathcal{X}}W)^{-1} = L - LW^*\underline{\mathcal{X}}WL + (LW^*\underline{\mathcal{X}}W)^2(\mathfrak{D}^{-1} +W^*(\mathcal{X}-z)^{-1}W)^{-1}. 
\end{align*}
Plugging the above expansion together with (\ref{071203}) and the estimate in (\ref{071202}) into (\ref{071201}), we obtain 
\begin{align*}
	 a^*\tilde{G}(z)a  &= -\frac{1}{z} - \frac{1}{z^2}a^*WLW^*a +\frac{1}{z} a^*\underline{\mathcal{X}}WLW^*a +\frac{1}{z} a^*WLW^*\underline{\mathcal{X}}a-a^*\underline{\mathcal{X}}WLW^*\underline{\mathcal{X}}a \notag \\
	 &\qquad + a^*\left(-\frac{1}{z}+\underline{\mathcal{X}}\right)WLW^*\underline{\mathcal{X}}WLW^*\left(-\frac{1}{z}+\underline{\mathcal{X}}\right)a \notag \\
	 & \qquad - a^*\left(-\frac{1}{z}+\underline{\mathcal{X}}\right)W(LW^*\underline{\mathcal{X}}W)^2(\mathfrak{D}^{-1}  +W^*(\mathcal{X}-z)^{-1}W)^{-1}W^*\left(-\frac{1}{z}+\underline{\mathcal{X}}\right)a \notag \\
	 & \qquad O_{\prec}(\sigma_{\max}^2q_n^{-1}).
\end{align*}
We will later show that except the first two terms in $a^*\tilde{G}(z)a$ above, all the remaining terms are sub-leading when considering the Cauchy integral $\oint_{\Gamma_{i}} a^*\tilde{G}(z)a ~\mathrm{d}z$. Since $L(z)$ is a diagonal matrix with entries 
\begin{align*}
	L_{jj} = (\mathfrak{D}^{-1} -\frac{1}{z})^{-1}_{jj} = \frac{zd_j}{z-d_j},
\end{align*}
we can write 
\begin{align*}
	WLW^* = \sum_{j= \pm 1, \ldots,  \pm k}\frac{w_j w_j^*}{d_j^{-1} - z^{-1}} = \sum_{j=\pm 1, \ldots, \pm k}\frac{z d_i w_j w_j^*}{z - d_j}.
\end{align*}
We first estimate those sub-leading terms in $\oint_{\Gamma_{i}} a^*\tilde{G}(z)a ~\mathrm{d}z$.  First, it is easy to see
\begin{align*}
	\oint_{\Gamma_{i}} \frac{1}{z} a^*\underline{\mathcal{X}}WLW^*a ~\mathrm{d}z &= \sum_{j= \pm 1, \ldots,  \pm k}\langle a, w_j \rangle \oint_{\Gamma_{i}} \frac{d_j}{z-d_j}a^*\underline{\mathcal{X}}w_j~\mathrm{d}z = O_{\prec}(\sigma_{\max}^2q_n^{-1}), \notag \\
	\oint_{\Gamma_{i}} a^*\underline{\mathcal{X}}WLW^*\underline{\mathcal{X}}a ~\mathrm{d}z &= \sum_{j= \pm 1, \ldots,  \pm k} \oint_{\Gamma_{i}} \frac{zd_j }{z-d_j} a^*\underline{\mathcal{X}}w_jw_j^*\underline{\mathcal{X}}a ~\mathrm{d}z = O_{\prec}(\sigma_{\max}^4q_n^{-2}),
\end{align*}
and
\begin{align*}
	&\oint_{\Gamma_{i}} \frac{1}{z^2}a^*WLW^*\underline{\mathcal{X}}WLW^*a ~\mathrm{d}z  = \sum_{j, \ell = \pm 1, \ldots, \pm k} \langle a, w_j \rangle \langle a, w_{\ell} \rangle \oint_{\Gamma_{i}} \frac{d_jd_{\ell}}{(z-d_j)(z-d_{\ell})}w_j^*\underline{\mathcal{X}(z)}w_{\ell} ~\mathrm{d}z \notag \\	
	&\qquad = O_{\prec}(\sigma_{\max}^2q_n^{-1}) \Bigg(\sum_{j, \ell \in \mathcal{I}_{i}} \langle a, w_j \rangle \langle a, w_{\ell} \rangle\oint_{\Gamma_{i}}\frac{d_i^2}{(z-d_i)^2} ~\mathrm{d}z \notag \\
	&\qquad \qquad + 2\sum_{j \in \mathcal{I}_{i}^c}\sum_{\ell \in \mathcal{I}_{i}} \langle a, w_j \rangle \langle a, w_{\ell} \rangle\oint_{\Gamma_{i}} \frac{d_id_j}{(z-d_i)(z-d_j)} ~\mathrm{d}z \Bigg)
	\notag \\
	&\qquad = O_{\prec}(\delta_i^{-1}\sigma_{\max}^2q_n^{-1}),
\end{align*}
Similarly, we have 
\begin{align*}
	\oint_{\Gamma_{i}} \frac{1}{z}a^*\underline{\mathcal{X}}WLW^*\underline{\mathcal{X}}WLW^*a ~\mathrm{d}z &= O_{\prec}(\delta_i^{-1}\sigma_{\max}^4q_n^{-2}), \notag \\
	\oint_{\Gamma_{i}} a^*\underline{\mathcal{X}}WLW^*\underline{\mathcal{X}}WLW^*\underline{\mathcal{X}}a ~\mathrm{d}z &= O_{\prec}(\delta_i^{-1}\sigma_{\max}^6q_n^{-3}),
\end{align*}
and
\begin{align*}
	&\oint_{\Gamma_{i}}a^*\left(-\frac{1}{z}+\underline{\mathcal{X}}\right)W(LW^*\underline{\mathcal{X}}W)^2(\mathfrak{D}^{-1}  + W^*(\mathcal{X}-z)^{-1}W)^{-1}W^*\left(-\frac{1}{z}+\underline{\mathcal{X}}\right)a\Big] ~\mathrm{d}z \notag \\
	& \qquad = O_{\prec}(\delta_i^{-1}\sigma_{\max}^2q_n^{-1}).
\end{align*}
Consequently, in light of (\ref{070401}) and the above estimation, we have
\begin{align*}
	\langle a^*, \tilde{P}_{i}a \rangle &=-\frac{1}{2\pi \mathrm{i}} \oint_{\Gamma_{i}} (-\frac{1}{z} - \frac{1}{z^2}a^*WLW^*a)~\mathrm{d}z + O_{\prec}(\delta_i^{-1}\sigma_{\max}^2q_n^{-1}) \notag \\
	 & = \frac{1}{2\pi \mathrm{i}} \sum_{j = \pm 1, \ldots, \pm k} |\langle a, w_j \rangle|^2 \oint_{\Gamma_{i}} \frac{d_j}{z(z - d_j)} ~\mathrm{d}z  + O_{\prec}(\delta_i^{-1}\sigma_{\max}^2q_n^{-1}) \notag \\
	 & = \sum_{j \in \mathcal{I}_{i}}|\langle a, w_j \rangle|^2 + O_{\prec}(\delta_i^{-1}\sigma_{\max}^2q_n^{-1}) \notag \\
	 & =  \langle a^*, P_{i}a \rangle + O_{\prec}(\delta_i^{-1}\sigma_{\max}^2q_n^{-1}).
\end{align*}
Hence, we complete the proof. 
\end{proof}

\section{Figures}\label{s.fig}
In this appendix, we display all the simulation figures.

\begin{figure}[htbp]


        \centering
        \includegraphics[width=\linewidth]{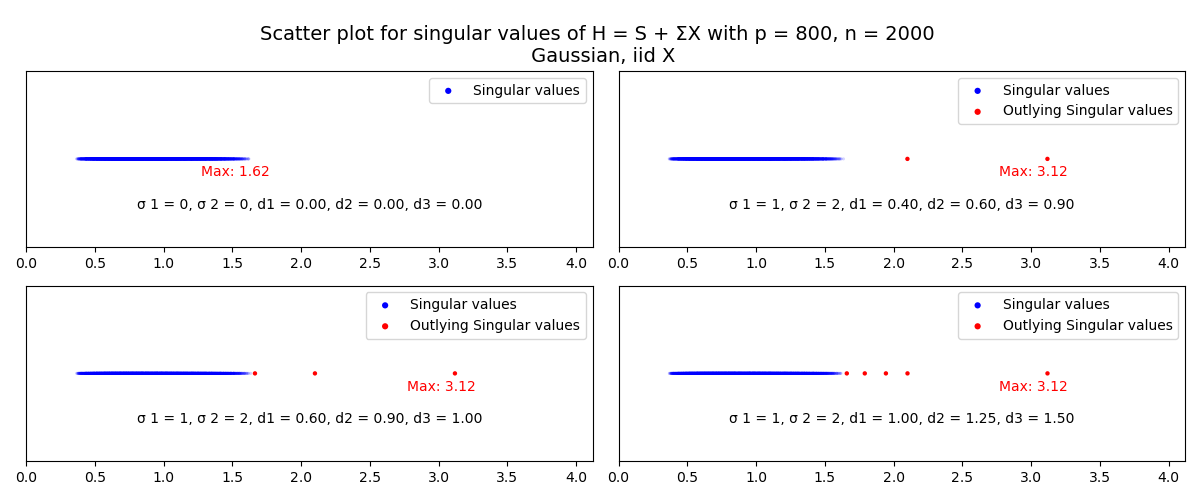} 
       \captionof{figure}{}
        \label{fig:Gaussian_iid_SV}

\end{figure}

\begin{figure}[htbp]

        \centering
        \includegraphics[width=\linewidth]{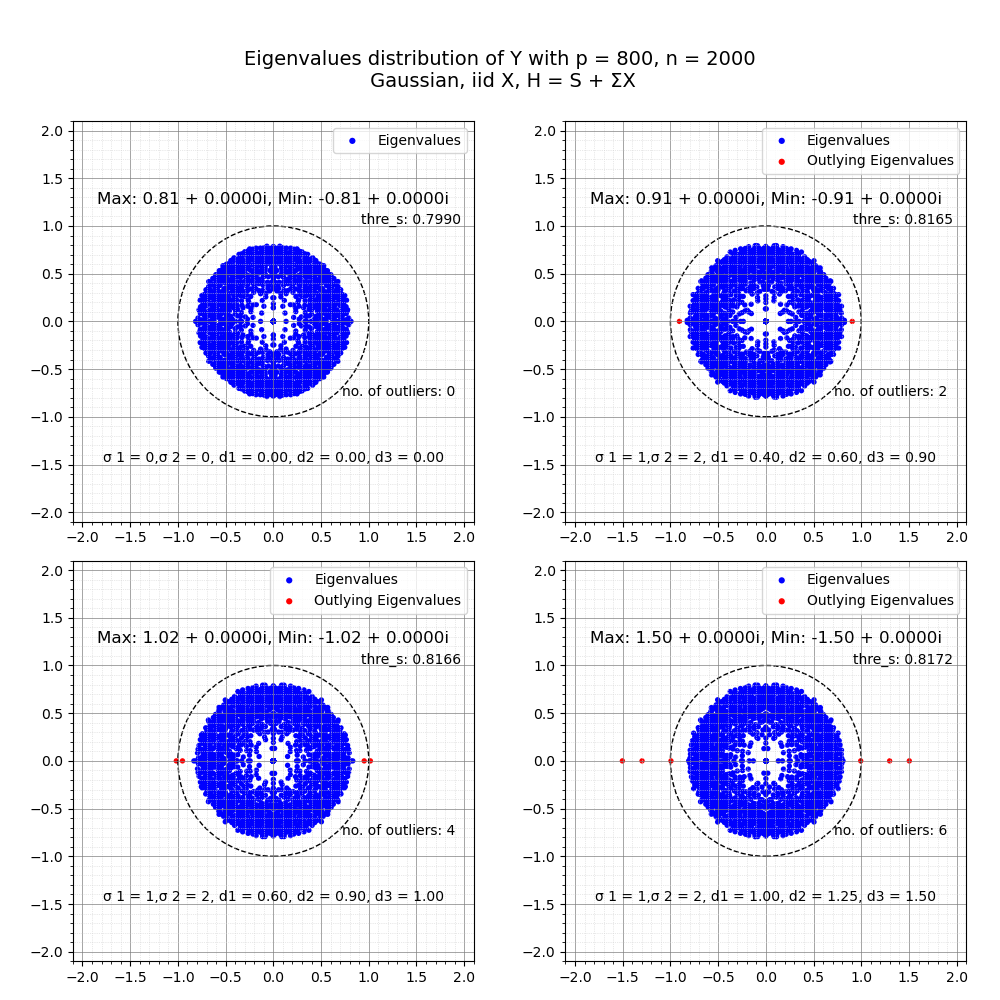}
        \captionof{figure}{}
        \label{fig:Gaussian_iid_EV}

\end{figure}

\begin{figure}[htbp]


        \centering
        \includegraphics[width=\linewidth]{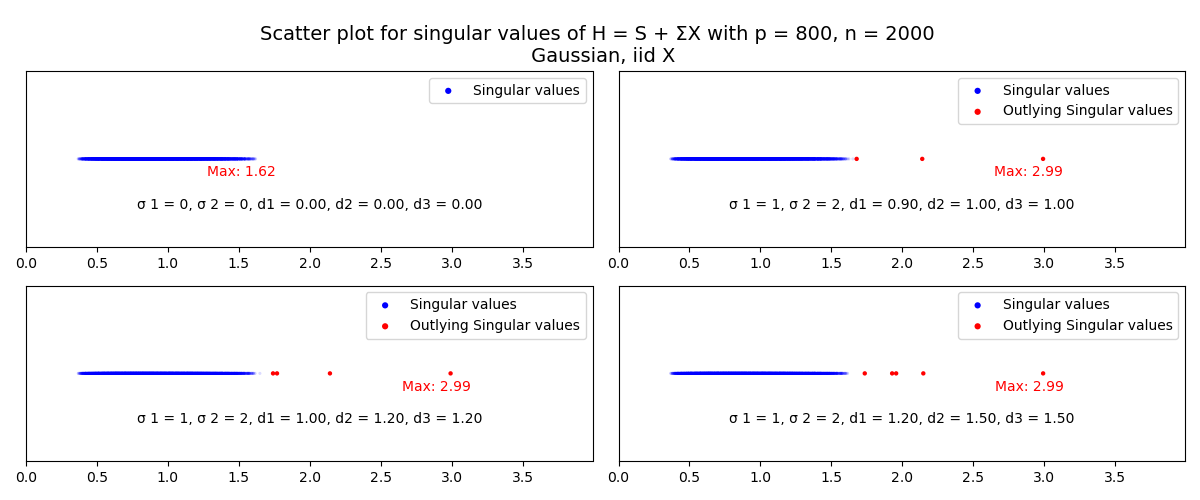} 
        \captionof{figure}{}
        \label{fig:Gaussian_iid_SV_multiple}

\end{figure}

\begin{figure}[htbp]

        \centering
        \includegraphics[width=\linewidth]{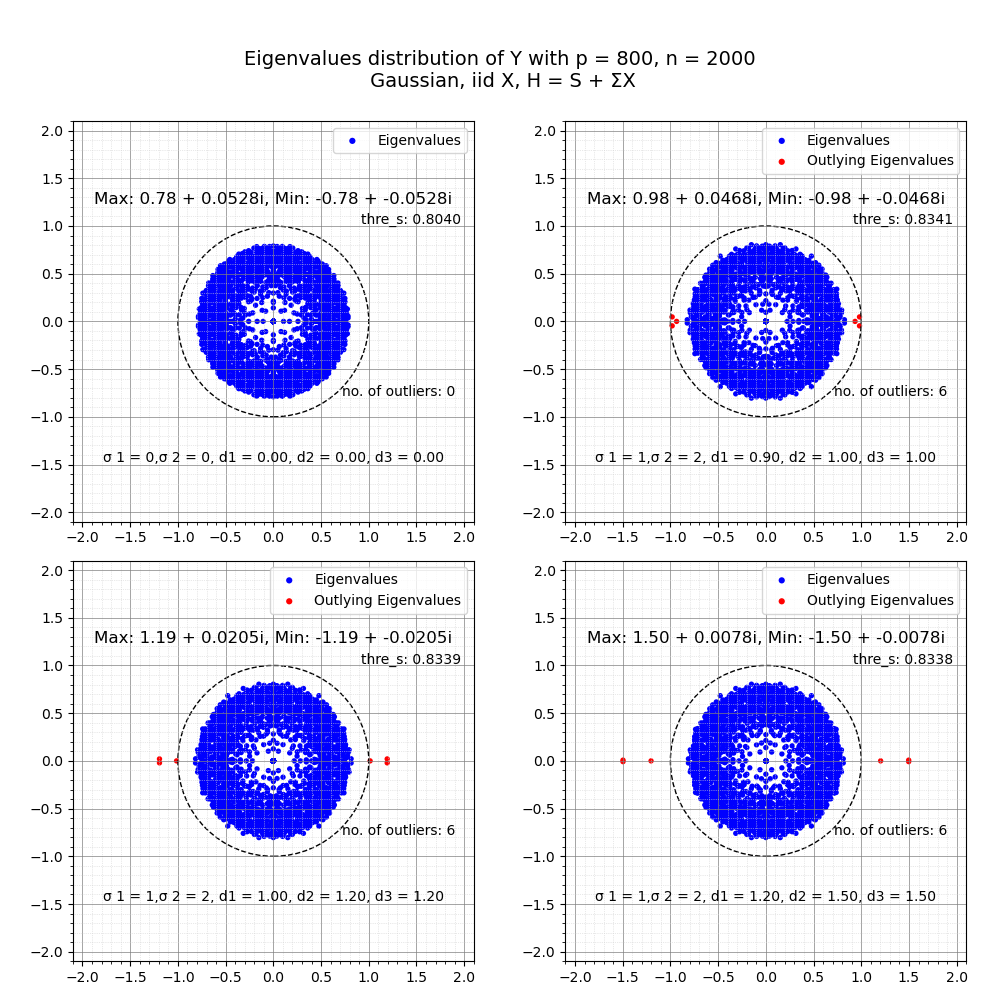}
        \captionof{figure}{}
        \label{fig:Gaussian_iid_EV_multiple}

\end{figure}

\begin{figure}[htbp]


        \centering
        \includegraphics[width=\linewidth]{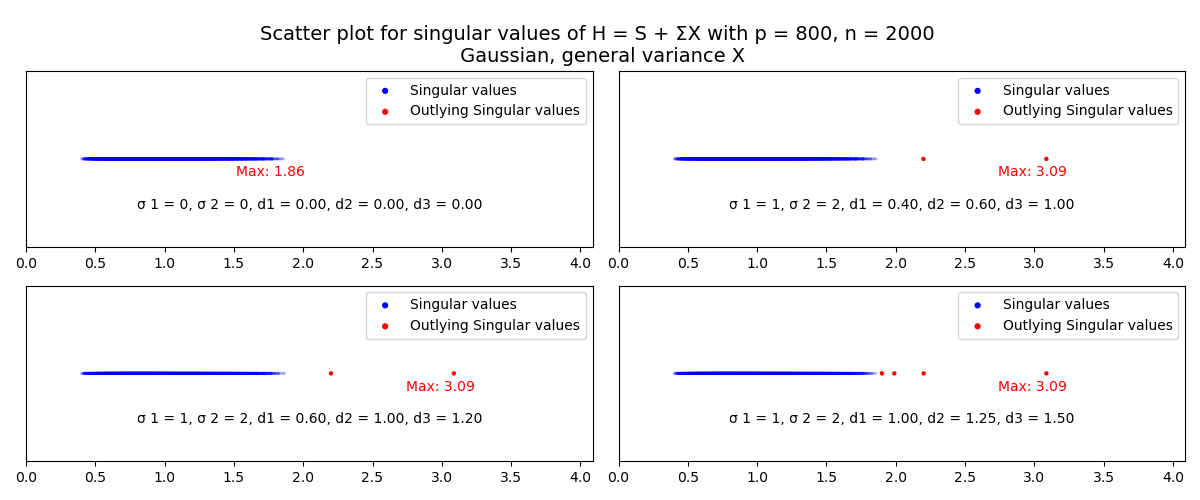} 
        \captionof{figure}{}
        \label{fig:Gaussian_general_SV}

\end{figure}

\begin{figure}[htbp]

        \centering
        \includegraphics[width=\linewidth]{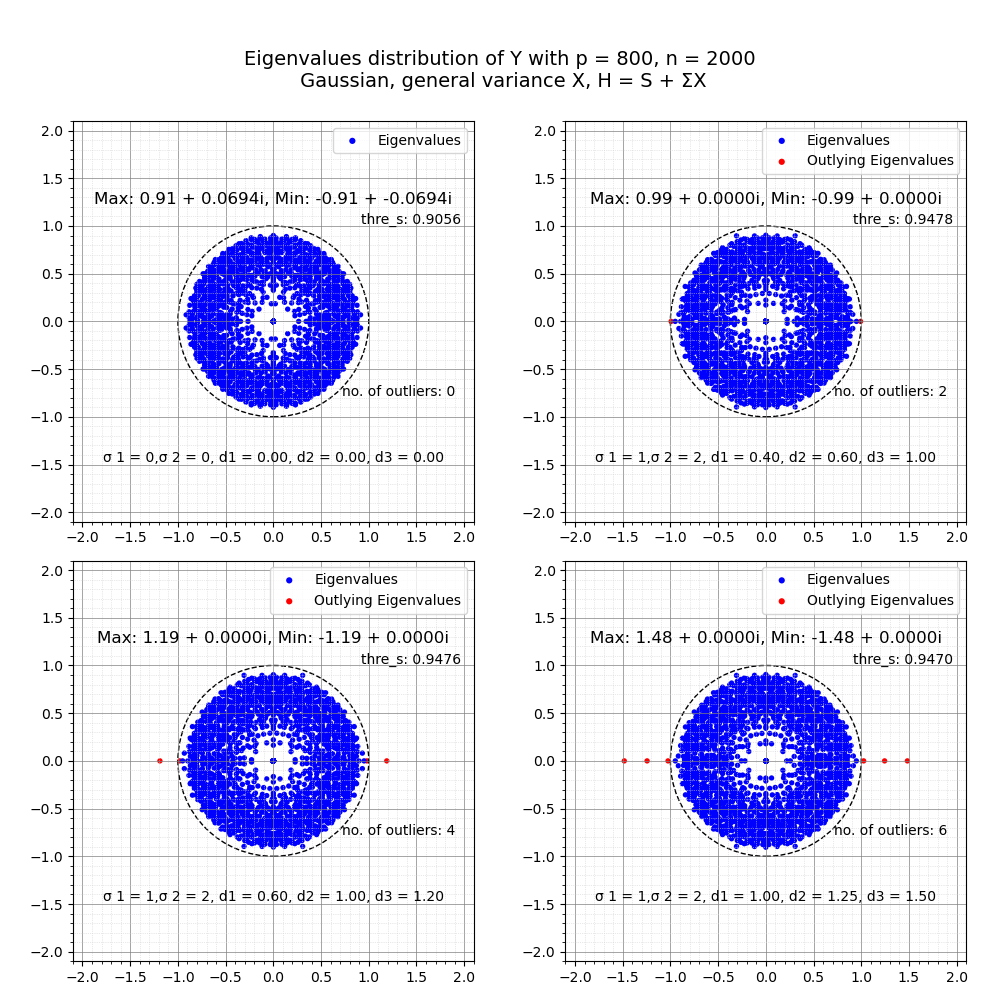}
        \captionof{figure}{}
        \label{fig:Gaussian_general_EV}

\end{figure}

\begin{figure}[htbp]


        \centering
        \includegraphics[width=\linewidth]{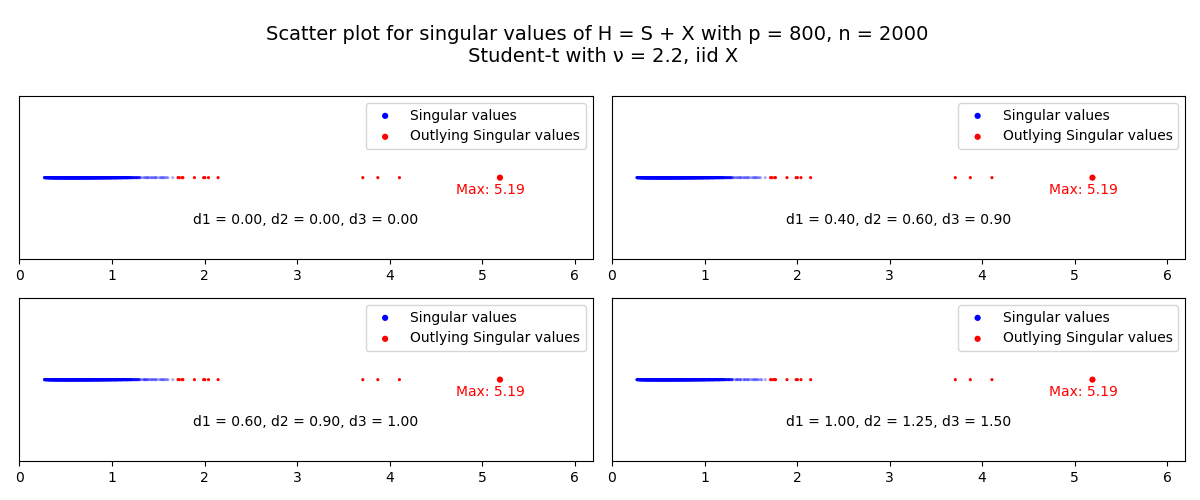} 
        \captionof{figure}{}
        \label{fig:Heavy_iid_SV}

\end{figure}

\begin{figure}[htbp]

        \centering
        \includegraphics[width=\linewidth]{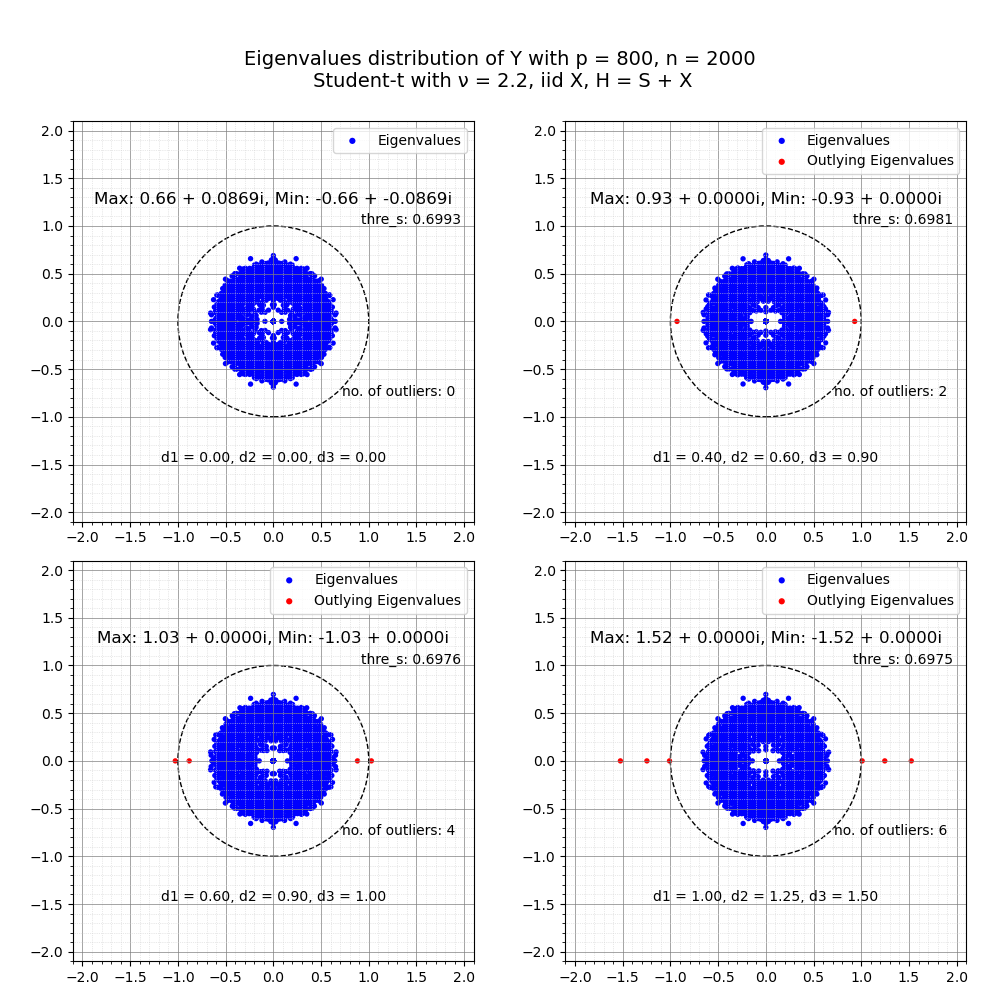}
        \captionof{figure}{}
        \label{fig:Heavy_iid_EV}

\end{figure}

\end{appendix}


\begin{thebibliography}{00}
 
 \bibitem{AEKN18}
Alt, J., Erd\H{o}s, L., Kr\"{u}ger, T.,  Nemish, Y. (2019). Location of the spectrum of Kronecker random matrices. Annales de l’I.H.P. Probabilit\'{e}s et Statistiques, 55(2). https://doi.org/10.1214/18-AIHP894

\bibitem{AEK18}
Alt, J., Erd\H{o}s, L., Kr\"{u}ger, T. (2018). Local inhomogeneous circular law. The Annals of Applied Probability, 28(1), 148–203. https://doi.org/10.1214/17-AAP1302

\bibitem{AEK21}
Alt J, Erd\H{o}s, L., Kr\"{u}ger, T. Spectral radius of random matrices with independent entries. Probability and Mathematical Physics. 2021 May 22;2(2):221-80.

\bibitem{AEK19}
Ajanki, O. H., Erd\H{o}s, L., Kr\"{u}ger, T. (2019). Stability of the matrix Dyson equation and random matrices with correlations. Probability Theory and Related Fields, 173, 293-373.

\bibitem{ABP09}
Auffinger A, Ben Arous G, P\'{e}ch\'{e} S. Poisson convergence for the largest eigenvalues of heavy tailed random matrices. InAnnales de l'IHP Probabilités et statistiques 2009 (Vol. 45, No. 3, pp. 589-610).

\bibitem{BY08}
Bai Z, Yao JF. Central limit theorems for eigenvalues in a spiked population model. InAnnales de l'IHP Probabilit\'{e}s et statistiques 2008 (Vol. 44, No. 3, pp. 447-474).

\bibitem{BY12}
Bai Z, Yao J. On sample eigenvalues in a generalized spiked population model. Journal of Multivariate Analysis. 2012 Apr 1;106:167-77.

\bibitem{BY93}
Bai ZD, Yin YQ. Limit of the smallest eigenvalue of a large dimensional sample covariance matrix. Ann. Probab. 1993 Jul 1;21(3):1275-94.

\bibitem{BBP} Baik J, Ben Arous G, P\'{e}ch\'{e} S. Phase transition of the largest eigenvalue for nonnull complex sample covariance matrices. Ann. Probab., 33(5):1643–1697, 2005.


\bibitem{BS06} Baik J, Silverstein JW. Eigenvalues of large sample covariance matrices of spiked population models. Journal of multivariate analysis. 2006 Jul 1;97(6):1382-408.



\bibitem{BDW22} Bao, Z., Ding, X.,  Wang,  K. Singular vector and singular subspace distribution for the matrix denoising model. Ann. Statist. 49(1): 370-392 (2021)

\bibitem{BDWW} Bao Z, Ding X, Wang J, Wang K. Principal components of spiked covariance matrices in the supercritical regime. Ann. Stat., 50 (2), 1144-1169, (2022).

\bibitem{BHY}
Bao Z, He Y, Yang F. Random matrix theory: Local laws and applications. Handbook of Statistics. 2024.

\bibitem{BPZ} Bao, Z., Pan, G., Zhou, W. Universality for the largest eigenvalue of sample covariance matrices with general population. Ann. Statist., 43(1):382–421, 2015.

\bibitem{BW22} Bao Z, Wang D. Eigenvector distribution in the critical regime of BBP transition. Probability Theory and Related Fields. 2022 Feb;182(1):399-479.

\bibitem{BBCC21}
Belinschi S, Bordenave C, Capitaine M, Cébron G. Outlier eigenvalues for non-Hermitian polynomials in independent iid matrices and deterministic matrices. Electronic Journal of Probability. 2021;26:1-37.

\bibitem{BGM11}
Benaych-Georges F, Guionnet A, Maida M. Fluctuations of the extreme eigenvalues of finite rank deformations of random matrices. Electron. J. Probab., 16:1621-1662, 2011.

\bibitem{BN11}
Benaych-Georges F, Nadakuditi RR. The eigenvalues and eigenvectors of finite, low rank perturbations of large random matrices. Advances in Mathematics. 2011 May 1;227(1):494-521.

\bibitem{BN12}
Benaych-Georges F, Nadakuditi RR. The singular values and vectors of low rank perturbations of large rectangular random matrices. Journal of Multivariate Analysis. 2012 Oct 1;111:120-35.

\bibitem{BDMN11} Bianchi P, Debbah M, Maïda M, Najim J. Performance of statistical tests for single-source detection using random matrix theory. IEEE Transactions on Information theory. 2011 Mar 14;57(4):2400-19.


\bibitem{BC16}
Bordenave, C.,  Capitaine, M. (2016). Outlier eigenvalues for deformed iid random matrices. Communications on Pure and Applied Mathematics, 69(11), 2131-2194.


\bibitem{BCG22} Bordenave C, Chafa\"{i} D, Garc\'{i}a-Zelada D. Convergence of the spectral radius of a random matrix through its characteristic polynomial. Probability Theory and Related Fields. 2022 Apr 1:1-9.

\bibitem{BCN} Bordenave C, Coste S, Nadakuditi RR. Detection thresholds in very sparse matrix completion. Foundations of Computational Mathematics. 2023 Oct;23(5):1619-743.

\bibitem{B19}
Burgess, D.J., 2019. Spatial transcriptomics coming of age. Nature Reviews Genetics, 20(6), pp.317-317.

\bibitem{Capitaine}
Capitaine, M. (2018). Limiting eigenvectors of outliers for spiked information-plus-noise type matrices. S\'{e}minaire de Probabilit\'{e}s XLIX, 119-164.


\bibitem{CD08}
Capitaine M, Donati-Martin C, F\'{e}ral D. The largest eigenvalues of finite rank deformation of large Wigner matrices: convergence and nonuniversality of the fluctuations. Ann. Probab., 37(1):1–47, 2009.

\bibitem{CD12}
Capitaine M, Donati-Martin C, F\'{e}ral D. Central limit theorems for eigenvalues of deformations of Wigner matrices. InAnnales de l'IHP Probabilités et statistiques 2012 (Vol. 48, No. 1, pp. 107-133).

\bibitem{CD16}
Capitaine M, Donati-Martin C. Spectrum of deformed random matrices and free probability. arXiv preprint arXiv:1607.05560. 2016 Jul 19.





\bibitem{CCF21}
Chen Y, Cheng C, Fan J. Asymmetry helps: Eigenvalue and eigenvector analyses of asymmetrically perturbed low-rank matrices. Annals of statistics. 2021 Feb;49(1):435.

\bibitem{CWC21}
Cheng C, Wei Y, Chen Y. Tackling small eigen-gaps: Fine-grained eigenvector estimation and inference under heteroscedastic noise. IEEE Transactions on Information Theory. 2021 Sep 10;67(11):7380-419.

\bibitem{CTT17} Choi Y, Taylor J, Tibshirani R. Selecting the number of principal components: Estimation of the true rank of a noisy matrix. The Annals of Statistics. 2017 Dec 1:2590-617.

\bibitem{CEX23}
Cipolloni G, Erd\H{o}s L, Xu Y. Universality of extremal eigenvalues of large random matrices. arXiv preprint arXiv:2312.08325. 2023 Dec 13.

\bibitem{CH77}
Cochran RN, Horne FH. Statistically weighted principal component analysis of rapid scanning wavelength kinetics experiments. Analytical Chemistry. 1977 May 1;49(6):846-53

\bibitem{F09}
Foi A. Clipped noisy images: Heteroskedastic modeling and practical denoising. Signal Processing. 2009 Dec 1;89(12):2609-29.



\bibitem{Ding20}
Ding, X. (2020). High dimensional deformed rectangular matrices with applications in matrix denoising. Bernoulli 26(1): 387-417 (2020).

\bibitem{DY22}
Ding X, Yang F. Tracy-Widom distribution for heterogeneous Gram matrices with applications in signal detection. IEEE Transactions on Information Theory. 2022 May 20;68(10):6682-715.

\bibitem{EKS19}
Erd\H{o}s L, Kr\"{u}ger T, Schr\"{o}der D. Random matrices with slow correlation decay. InForum of Mathematics, Sigma 2019 Jan (Vol. 7, p. e8). Cambridge University Press.

\bibitem{EKY2013}
Erd{\H o}s L., Knowles A., and Yau H.-T. Averaging fluctuations in resolvents of random band matrices. Ann. Henri Poincar{\'e}, 14(8):1837--1926, 2013.

\bibitem{GD14}
Gavish M, Donoho DL. The optimal hard threshold for singular values is $4/\sqrt {3} $. IEEE Transactions on Information Theory. 2014 Jun 30;60(8):5040-53.

\bibitem{GD17}
Gavish M, Donoho DL. Optimal shrinkage of singular values. IEEE Transactions on Information Theory. 2017 Jan 17;63(4):2137-52.

\bibitem{Han24}
Han Y. Finite rank perturbation of non-Hermitian random matrices: heavy tail and sparse regimes. arXiv preprint arXiv:2407.21543. 2024 Jul 31.

\bibitem{HS19}
Hafemeister C, Satija R. Normalization and variance stabilization of single-cell RNA-seq data using regularized negative binomial regression. Genome biology. 2019 Dec 23;20(1):296.

\bibitem{HK17}
He Y, Knowles A. Mesoscopic eigenvalue statistics of Wigner matrices. The Annals
of Applied Probability, 27(3), 1510-1550, 2017.


\bibitem{HFS07}
Helton, J. W., Far, R. R., Speicher, R. (2007). Operator-valued semicircular elements: solving a quadratic matrix equation with positivity constraints. International Mathematics Research Notices, 2007(9), rnm086-rnm086.

\bibitem{JCL21}
Jung JH, Chung HW, Lee JO. Detection of signal in the spiked rectangular models. InInternational Conference on Machine Learning 2021 Jul 1 (pp. 5158-5167). PMLR.

\bibitem{JN17}
Johnstone IM, Nadler B. Roy’s largest root test under rank-one alternatives. Biometrika. 2017 Mar 1;104(1):181-93.

\bibitem{JP18}
Johnstone IM, Paul D. PCA in high dimensions: An orientation. Proceedings of the IEEE. 2018 Jul 18;106(8):1277-92.

\bibitem{KKP}
Khorunzhy AM, Khoruzhenko BA, Pastur LA. Asymptotic properties of large random matrices with independent entries. Journal of Mathematical Physics. 1996 Oct 1;37(10):5033-60.

\bibitem{KY13} Knowles A, Yin J. The isotropic semicircle law and deformation of Wigner
matrices. Comm. Pure Appl. Math., 66(11): 1663–1750, 2013.

\bibitem{KY14} Knowles A, Yin J. The outliers of a deformed Wigner matrix. The Annals of
Probability, 42(5): 1980–2031, 2014.

\bibitem{LK25} Landa B, Kluger Y. The dyson equalizer: Adaptive noise stabilization for low-rank signal detection and recovery. Information and Inference: A Journal of the IMA. 2025 Mar;14(1):iaae036.

\bibitem{LS18}
Lee JO, Schnelli K. Local law and Tracy–Widom limit for sparse random matrices. Probability Theory and Related Fields. 2018 Jun;171:543-616.

\bibitem{LWY17}
Li, Zeng, Qinwen Wang, and Jianfeng Yao. "Identifying the number of factors from singular values of a large sample auto-covariance matrix." Ann. Statist. 45(1): 257-288  (2017).

\bibitem{LPZZ22}
Lin Z, Pan G, Zhao P, Zhou J. Asymptotic distribution of spiked eigenvalues in the large signal-plus-noise models. arXiv preprint arXiv:2401.11672. 2024 Jan 22.

\bibitem{O09}
Onatski, A. (2009). Testing hypotheses about the number of factors in large factor models. Econometrica, 77(5), 1447-1479.

\bibitem{PLY17}
Passemier D, Li Z, Yao J. On estimation of the noise variance in high dimensional probabilistic principal component analysis. Journal of the Royal Statistical Society Series B: Statistical Methodology. 2017 Jan;79(1):51-67.

\bibitem{Paul}
Paul D. Asymptotics of sample eigenstructure for a large dimensional spiked covariance model. Statistica Sinica. 2007 Oct 1:1617-42.

\bibitem{Ra15}
Rajagopalan, A. B. (2015). Outlier eigenvalue fluctuations of perturbed iid matrices. University of California, Los Angeles.

\bibitem{SH14}
Salmon J, Harmany Z, Deledalle CA, Willett R. Poisson noise reduction with non-local PCA. Journal of mathematical imaging and vision. 2014 Feb;48:279-94.

\bibitem{Sosh04}
Soshnikov A. Poisson statistics for the largest eigenvalues of Wigner random matrices with heavy tails. Electron. Commun. Probab. 9: 82-91 (2004)

\bibitem{Tao}
Tao, T. (2013). Outliers in the spectrum of iid matrices with bounded rank perturbations. Probability Theory and Related Fields, 155(1), 231-263.

\bibitem{W06}
Wallach HM. Topic modeling: beyond bag-of-words. InProceedings of the 23rd international conference on Machine learning 2006 Jun 25 (pp. 977-984).

\bibitem{WY17}
Wang, Qinwen, and Jianfeng Yao. "Extreme eigenvalues of large-dimensional spiked Fisher matrices with application." Ann. Statist. 45(1): 415-460 (2017).

\end{thebibliography}
\end{document}